\documentclass[a4paper,11pt]{amsart}

\usepackage{amssymb,amscd}
\usepackage{amsmath,amsfonts,latexsym}
\usepackage{color}
\usepackage[latin1]{inputenc}
\usepackage{enumerate,afterpage}

\usepackage{mathrsfs}
\usepackage{mathtools}
\usepackage{stmaryrd}

\usepackage{amssymb,amscd}
\usepackage{amsmath,amsfonts,latexsym}
\usepackage{graphicx}
\usepackage{color}
\usepackage[english]{babel}
\usepackage[latin1]{inputenc}
\usepackage{enumerate,afterpage}
%\usepackage{tikz}
%\usepackage{tikz-cd}
%\usetikzlibrary{matrix,arrows}
\usepackage[all]{xy}

\newcounter{notes}%

\newtheorem{cor}{Corollary}[section]

\newtheorem{prop}[cor]{Proposition}
\newtheorem{lemma}[cor]{Lemma}

\newtheorem{Theorem}[cor]{Theorem}

\newtheorem{Proposition}[cor]{Proposition}
\newtheorem{Corollary}[cor]{Corollary}

\newtheorem{introthm}{Theorem}

\usepackage{amsthm}

\makeatletter
\newtheorem*{rep@theorem}{\rep@title}
\newcommand{\newreptheorem}[2]{%
\newenvironment{rep#1}[1]{%
 \def\rep@title{#2 \ref{##1}}%
 \begin{rep@theorem}}%
 {\end{rep@theorem}}}
\makeatother

\newreptheorem{theorem}{Theorem}

\theoremstyle{definition}
\newtheorem{defi}[cor]{Definition}
 \newtheorem{Claim}[cor]{Claim}

\newtheorem{remark}[cor]{Remark}
\theoremstyle{plain}

\def\co{\colon\thinspace}

%\newcommand{\cAgg1}{{\mathcal A}_{\gamma,\Gamma,1}}

%marked

\newcommand{\cC}{{\mathcal C}}

\newcommand{\C}{{\mathbb C}}
\newcommand{\HH}{{\mathbb H}}

\newcommand{\N}{{\mathbb N}}

\newcommand{\AdS}{\mathbb{A}\mathrm{d}\mathbb{S}}

\newcommand{\PSL}{\mathsf{PSL}}

\newcommand{\CH}{\mathrm{CH}}

%\newcommand{\rhombus}{C_{\diamond}}

%\def\Teich{\mathcal{T}}

% \newtheorem{introthm}{Theorem}
% \newtheorem{introcor}[introthm]{Corollary}
% \renewcommand{\theintrothm}{\Alph{introthm}}
% \renewcommand{\theintrocor}{\Alph{introcor}}

%\newcommand{\l}{\lambda}

%NEW MACROS ADDED BY JEFF 13 April 2014

\newcommand{\RR}{\mathbb R}
\newcommand{\CC}{\mathbb C}
\newcommand{\CP}{\mathbb{CP}}

\newcommand{\RP}{\mathbb{RP}}

%\newcommand{\OO}{\operatorname{O}}
%\newcommand{\PSO}{\operatorname{PSO}}
%\newcommand{\PO}{\operatorname{PO}}
%\newcommand{\GL}{\operatorname{GL}}
%\newcommand{\PSL}{\operatorname{PSL}}
%\newcommand{\PGL}{\operatorname{PGL}}

%\newcommand{\ML}{\mathcal{ML}}

%AdS macros
%\newcommand{\AdS}{\mathrm{AdS}}

\newcommand{\Ein}{\mathrm{Ein}}
%HP macros

\setcounter{tocdepth}{3}

\let\oldtocsection=\tocsection

\let\oldtocsubsection=\tocsubsection

\let\oldtocsubsubsection=\tocsubsubsection

\renewcommand{\tocsection}[2]{\hspace{0em}\oldtocsection{#1}{#2}}
\renewcommand{\tocsubsection}[2]{\hspace{1em}\oldtocsubsection{#1}{#2}}
\renewcommand{\tocsubsubsection}[2]{\hspace{2em}\oldtocsubsubsection{#1}{#2}}

\AtBeginDocument{%
   \def\MR#1{}
}

\begin{document}

\title[Width]{Quasicircles and width of Jordan curves in $\CP^1$}

\author[F. Bonsante]{Francesco Bonsante}
\address{FB: Universit\`a degli Studi di Pavia}
\email{francesco.bonsante@unipv.it}
\urladdr{www-dimat.unipv.it/$\sim$bonsante}

\author[J. Danciger]{Jeffrey Danciger}
\address{JD: Department of Mathematics, University of Texas at Austin}
\email{jdanciger@math.utexas.edu}
\urladdr{www.ma.utexas.edu/users/jdanciger}

\author[S. Maloni]{Sara Maloni}
\address{SM: Department of Mathematics, University of Virginia}
\email{sm4cw@virginia.edu}
\urladdr{www.people.virginia.edu/$\sim$sm4cw}

\author[J.-M. Schlenker]{Jean-Marc Schlenker}
\address{JMS: Department of mathematics, University of Luxembourg}
\email{jean-marc.schlenker@uni.lu}
\urladdr{math.uni.lu/schlenker}

\thanks{Bonsante was partially supported by the Blue Sky research project ``Analytical and geometric properties of low dimensional manifolds''; Danciger was partially supported by an Alfred P. Sloan Foundation Fellowship and by NSF grants DMS-1510254 and DMS-1812216; Maloni was partially supported by NSF grant DMS-1506920, DMS-1650811, DMS-1839968 and DMS-1848346; Schlenker was partially supported by UL IRP grant NeoGeo and FNR grants INTER/ANR/15/11211745 and OPEN/16/11405402. The authors also acknowledge support from U.S. National Science Foundation grants DMS-1107452, 1107263, 1107367 ``RNMS: GEometric structures And Representation varieties'' (the GEAR Network).}

\date{\today}

\begin{abstract}
We study a notion of ``width" for Jordan curves in $\mathbb{CP}^1$, paying special attention to the class of quasicircles. The width of a Jordan curve is defined in terms of the geometry of its convex hull in hyperbolic three-space. A similar invariant in the setting of anti de Sitter geometry was used by Bonsante-Schlenker to characterize quasicircles amongst a larger class of Jordan curves in the boundary of anti de Sitter space. By contrast to the AdS setting, we show that there are Jordan curves of bounded width which fail to be quasicircles. However, we show that Jordan curves with small width are quasicircles.
\end{abstract}

\maketitle

\section{Results and motivations}\label{intro}

\subsection{The width of a Jordan curve in $\CP^1$}

Throughout we identify $\CP^1$ with the boundary at infinity of the hyperbolic three-space $\HH^3$.
Given a Jordan curve $C$ in $\CP^1$, let $\CH(C)$ denote the convex hull of $C$ in the 3-dimensional hyperbolic space $\HH^3$, namely the smallest closed convex set whose accumulation set at infinity is $C$. The boundary of $\CH(C)$ is the union of two properly embedded disks, denoted $\partial^+ \CH(C)$ and $\partial^- \CH(C)$. 

\begin{defi}\label{def:width}
The \emph{width} of a Jordan curve $C$ in $\CP^1$ is defined as:
\begin{align}\label{eqn:sup}
w(C) &= \sup_{x \in \CH(C)} \left(d(x,\partial^-\CH(C)) + d(x,\partial^+\CH(C))\right).
\end{align}
\end{defi}

Note that $w(C)$ may be infinite. 

A Jordan curve $C$ in $\CP^1$ is called a \emph{quasicircle} if it is the image of $\RP^1$ under a quasiconformal homeomorphism of $\CP^1$. Quasicircles arise, for example, as the limit sets of quasifuchsian surface groups. Such a quasicircle $C$ has $w(C) < \infty$. Indeed if $C$ is the limit set of the quasifuchsian group $\Gamma < \PSL(2,\CC)$, then the convex hull $\CH(C)$ is cocompact under the action of $\Gamma$, and hence the supremum in~\eqref{eqn:sup} is achieved at some point in $\CH(C)$.
In fact, it is true that any quasicircle has finite width. The main purpose of this article is to investigate to what extent the converse statement holds.

In Section \ref{width} we will prove the following result.
\begin{introthm} \label{example2}
There exist a Jordan curve $C$ with finite width which is not a quasicircle.
\end{introthm}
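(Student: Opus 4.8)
The plan is to construct $C$ explicitly as a limit of nested quasicircles, or directly as a curve with a single "bad" point where quasisymmetry fails but where the convex hull stays uniformly thin. The key tension to exploit: width is a condition about the \emph{transverse} thickness of $\CH(C)$, while the quasicircle property is sensitive to how the curve behaves at infinitely fine scales near a single point. So I would look for a curve that near one point $p_0 \in C$ develops a sequence of increasingly degenerate "corners" (so that it fails to be quasisymmetric at $p_0$), but whose convex hull near $p_0$ nevertheless pinches down — i.e. the two boundary components $\partial^\pm \CH(C)$ come exponentially close together — so that the contribution to $w(C)$ from points near $p_0$ stays bounded.

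Concretely, first I would set up a model in the upper half-space $\HH^3$ with $C$ passing through $\infty$, writing $C$ away from $\infty$ as a graph or as a union of arcs. I would choose $C$ to agree with $\RP^1$ (the real line) outside a neighborhood of $0$, and inside that neighborhood to be built from a self-similar family of bumps at scales $\lambda^{-n}$, where the $n$-th bump has eccentricity (ratio of its two relevant dimensions) going to $0$ or $\infty$ as $n \to \infty$. The failure to be a quasicircle then follows from the standard three-point criterion for quasisymmetry: one exhibits triples of points on $C$ converging to $0$ whose cross-ratio (or the ratio of the two arc-diameters they cut off) is unbounded, which is incompatible with any curve being the image of $\RP^1$ under a quasiconformal homeomorphism — since quasiconformal homeomorphisms of $\CP^1$ are quasisymmetric and hence distort cross-ratios boundedly. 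The main obstacle is the other half: bounding $w(C)$.

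For the width bound, I would argue that $w(C) = \sup_x \big( d(x,\partial^-\CH(C)) + d(x,\partial^+\CH(C)) \big)$ can be controlled locally: away from $0$ the curve is a smooth (even round) circle arc, so the convex hull there is comparable to that of a round circle and contributes a bounded amount; near $0$, one estimates the two support planes of $\CH(C)$ at points above the $n$-th bump and shows that they are a uniformly bounded hyperbolic distance apart, \emph{regardless} of the eccentricity, because the bump, however thin, has bounded \emph{diameter} relative to its position — and it is diameter, not eccentricity, that governs the vertical extent of the convex hull in the half-space model. The technical heart is a comparison lemma: if a Jordan curve $C'$ is contained in an annular region bounded by two round circles $C_1 \subset C_2$ (or more flexibly, squeezed between two "combs" of circles) then $\CH(C')$ is contained between $\CH(C_1)$ and $\CH(C_2)$ in a way that forces the width of $C'$ to be bounded in terms of the moduli of these circles — and one arranges the construction so that all the nested circles involved have comparable size. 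Patching the local estimates (finitely many "ordinary" regions plus the single accumulating sequence of self-similar pieces near $0$, each contributing a uniformly bounded amount to the relevant distances) yields $w(C) < \infty$.

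I expect the delicate point to be ruling out a "drift": even if each bump individually contributes bounded width, one must ensure that the deepest point of $\CH(C)$ over the $n$-th bump does not have its distance to the \emph{far} boundary component grow with $n$. This is handled by noting that $\partial^+\CH(C)$ and $\partial^-\CH(C)$ both limit onto $0$ tangentially to $\RP^1$, so near $0$ they are both trapped in a thin horoball-like neighborhood of the plane over $\RP^1$; hence for $x$ above the $n$-th bump, \emph{both} $d(x,\partial^+\CH(C))$ and $d(x,\partial^-\CH(C))$ are controlled by the local geometry of that bump alone. I would record the explicit bump construction and the comparison lemma as the two lemmas feeding the proof, and then the theorem follows by combining the quasisymmetry obstruction with the patched width estimate.
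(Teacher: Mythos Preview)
Your overall strategy---string together increasingly degenerate ``bumps'' along (a modification of) the real line so that Ahlfors' bounded-turning criterion fails at an accumulation point, while arguing separately that the convex hull stays thin---is essentially the paper's, and the construction you describe is plausible. The gap is in the width bound: as written, it is not a proof.

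Three of your steps are not justified. First, the comparison lemma (a curve $C'$ in an annulus between round circles $C_1,C_2$ has width bounded by the modulus): your reasoning is that ``$\CH(C')$ is contained between $\CH(C_1)$ and $\CH(C_2)$'', but containment of $\CH(C')$ in the slab between two totally geodesic planes does not by itself bound $d(x,\partial^\pm\CH(C'))$ for $x\in\CH(C')$---the pleated surfaces $\partial^\pm\CH(C')$ could sit deep inside that slab, far from $x$, and the slab itself contains points at arbitrary distance from both walls. Second, the patching: width is a supremum over \emph{all} $x\in\CH(C)$, a global quantity, and a uniform bound over each bump separately does not assemble into a bound on $w(C)$ without further argument. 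Your ``drift'' paragraph acknowledges this, but the proposed resolution (both $\partial^\pm\CH(C)$ are ``trapped in a thin horoball-like neighborhood'' near the bad point) is itself an unproven assertion about the global shape of the pleated surfaces. Third, the claim that ``it is diameter, not eccentricity, that governs the vertical extent of the convex hull'' conflates Euclidean height with hyperbolic width: $w$ is $\PSL(2,\CC)$-invariant, so diameter and position are irrelevant and only the conformal shape of each bump matters; you have not isolated what constraint on that shape is actually needed.

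The paper replaces all of this with a single compactness criterion (Proposition~\ref{charac_bddwid}): $w(C)<\infty$ provided that for every sequence $g_n\in\PSL(2,\CC)$, after passing to a subsequence, either $g_n(C)\to\text{point}$ or there exist fixed open sets $U^\pm$ with $U^\pm\subset g_n(\Omega^\pm)$ for all $n$. The proof is short: if the width were unbounded, normalize so that a bad point of $\CH(C)$ sits at a fixed basepoint $\bar x$; planes with ideal boundary in $U^\pm$ then separate $\bar x$ from $\partial^\pm\CH(g_nC)$ and hence bound both distances, contradiction. (Your comparison lemma is morally the special case where $U^\pm$ are the two complementary disks of the annulus---so you were close, but the mechanism is support planes on both sides at bounded distance from the basepoint, not any nesting of convex hulls, and one still has to handle rescalings $g_n$ that collapse those disks.) To make the criterion checkable for the full curve, the paper builds $C$ from arcs of round circles meeting at uniformly positive angles and records (Claim~\ref{cl:theta}) that each arc carries an $\epsilon$-bigon on each side disjoint from $C$. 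Under any $g_n$, either every arc collapses (so $g_n(C)\to\text{point}$) or some arc survives and its two $\epsilon$-bigons furnish $U^\pm$. The failure of the quasicircle property comes not from any arc becoming thin but from two vertices of the circular configuration coalescing while a macroscopic arc still joins them---a neck pinch in Ahlfors' sense that is invisible to the $\epsilon$-bigon condition. This bypasses entirely the local-to-global passage your sketch leaves open.
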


So the condition that the width is finite does not characterize quasicircles. However, Jordan curves with small width are quasicircles, as we will show in Section \ref{small}. The precise statement actually uses a slightly different notion of width, the ``boundary width'', defined as follows.

\begin{defi}\label{def:pwidth}
The \emph{boundary width} of a Jordan curve $C$ in $\CP^1$ is defined as:
\begin{align}\label{eqn:boundary}
w_\partial(C) &= \max\left(\sup_{x \in \partial^+\CH(C)} d(x,\partial^-\CH(C)), \sup_{x \in \partial^-\CH(C)}d(x,\partial^+\CH(C))\right).
\end{align}
\end{defi}

It follows from the definition that $w_\partial(C)\leq w(C)$, but the two quantities are different, see Section \ref{sc:ww}.

\begin{introthm} \label{tm:small}
  Let $w_0 = \mathrm{cosh}^{-1}(\sqrt{2})$. If $C$ is a Jordan curve in $\CP^1$ with $w_\partial(C) < w_0$, then $C$ is a quasicircle.
\end{introthm}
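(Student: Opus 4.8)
The plan is to derive that $C$ is a quasicircle by comparing the two conformal uniformizations of the complementary domains $\Omega^+,\Omega^-$ of $C$ in $\CP^1$, using the geometry of $\CH(C)$ — in particular the smallness of $w_\partial(C)$ — to build the comparison map. (We may assume $C$ is not a round circle, otherwise $\partial^+\CH(C)=\partial^-\CH(C)$ and there is nothing to prove.) Recall the relevant pieces of the Sullivan--Epstein--Marden theory: each face $\partial^+\CH(C)$, $\partial^-\CH(C)$, with its induced path metric, is isometric to $\HH^2$, whose ideal circle $S^1=\partial_\infty\HH^2$ is thereby identified homeomorphically with $C$; and Sullivan's theorem, in the form we need, says that this identification of $S^1$ with $C$ coming from a face of $\partial\CH(C)$ differs from the boundary identification given by the Riemann uniformization of the complementary domain it faces by a homeomorphism of $S^1$ that is quasisymmetric with a \emph{universal} constant (concretely, Sullivan/Epstein--Marden provide a $K$--quasiconformal homeomorphism from the domain to the dome, $K$ universal, extending to the identity on $C$ at infinity, and one compares boundary values). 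Since a Jordan curve is a quasicircle exactly when the Riemann uniformizations of its two complementary domains induce boundary identifications of $S^1$ with it that differ by a quasisymmetric homeomorphism of $S^1$, it follows that $C$ is a quasicircle as soon as the two identifications of $S^1$ with $C$, one coming from $\partial^+\CH(C)$ and one from $\partial^-\CH(C)$, differ by a quasisymmetric homeomorphism. Hence it suffices to produce a quasi--isometry $\partial^+\CH(C)\to\partial^-\CH(C)$ for the path metrics whose boundary extension is the identity on $C$.

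The natural candidate is the nearest--point projection $\Pi\colon\partial^+\CH(C)\to\partial^-\CH(C)$, sending $x$ to the point of $\partial^-\CH(C)$ closest to it: by hypothesis $d(x,\Pi(x))=d(x,\partial^-\CH(C))<w_0$, and symmetrically one has $\Pi'\colon\partial^-\CH(C)\to\partial^+\CH(C)$. The crux of the argument is:

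\emph{Main Lemma.} If $w_\partial(C)<w_0=\cosh^{-1}(\sqrt2)$ then $\Pi$ is a homeomorphism with inverse $\Pi'$, and $\Pi$ is a quasi--isometry (in fact a bi--Lipschitz homeomorphism) for the induced path metrics, with constants depending only on $w_\partial(C)$.

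I would prove this as follows. Since $\CH(C)$ is convex and $x\in\partial^+\CH(C)\subset\CH(C)$, the geodesic segment $[x,\Pi(x)]$ has length $<w_0$ and meets $\partial^-\CH(C)$ orthogonally, i.e.\ is perpendicular to a support plane $P^-_{\Pi(x)}$ of $\partial^-\CH(C)$; a degree/continuity argument, using that both $\Pi$ and $\Pi'$ displace points by a uniformly bounded amount, then shows $\Pi$ is a proper bijection with inverse $\Pi'$. For the metric estimate one argues locally: on a flat piece of $\partial^+\CH(C)$ whose $\Pi$--image lies in a single face of $\partial^-\CH(C)$, $\Pi$ restricts to orthogonal projection between two totally geodesic planes at distance $<w_0$, whose length distortion is controlled by $\cosh w_0$. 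The remaining, genuinely three--dimensional, input — and exactly the place where the value $w_0=\cosh^{-1}(\sqrt2)$, equivalently $\sinh w_0=1$, is used — is a uniform \emph{transversality} statement: when $w_\partial(C)<w_0$, every support plane of $\partial^+\CH(C)$ meets every support plane of $\partial^-\CH(C)$ (equivalently their ideal boundary circles link in $\CP^1$), and they meet "uniformly transversally" in terms of $w_\partial(C)$. This transversality is what prevents $\Pi$ from degenerating where the flat pieces of $\partial^+\CH(C)$ and of $\partial^-\CH(C)$ abut their bending lines, and hence bounds the distortion across those loci. I expect this hyperbolic--trigonometric step — comparing two support planes each forced to come within distance $<w_0$ of a common region of $\CH(C)$ — to be the main obstacle; the soft parts (well--definedness, bijectivity, and the piecewise estimate) should be comparatively routine once it is available.

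Granting the Main Lemma, $\Pi$ is a quasi--isometry between two copies of $\HH^2$, hence extends to a quasisymmetric homeomorphism of the circles at infinity; and since $d(x,\Pi(x))<w_0$ is uniformly bounded, $x$ and $\Pi(x)$ converge to the same point of $C\subset\CP^1$, so this boundary extension is the identity on $C$. By the reduction in the first paragraph, $C$ is therefore a quasicircle. (Equivalently, the same ingredients can be repackaged as an explicit quasiconformal reflection of $\CP^1$ across $C$ — retract $\Omega^+$ onto $\partial^+\CH(C)$, apply $\Pi$, and push out into $\Omega^-$ along perpendiculars to support planes — but routing the argument through the uniformization comparison keeps the quasiconformal bookkeeping to a minimum.)
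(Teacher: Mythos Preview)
Your overall architecture matches the paper's exactly: reduce to showing that the nearest--point projection $\pi_+:\partial^-\CH(C)\to\partial^+\CH(C)$ (and its companion $\pi_-$) is a quasi--isometry for the induced path metrics, then invoke the Sullivan/Epstein--Marden comparison with the uniformizations of $\Omega^\pm$ to conclude that $C$ is a quasicircle. The paper carries this out as Lemma~\ref{lm:pi-} together with Theorem~\ref{tm:proj} (Proposition~\ref{prop:proj-hyp}).

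However, the mechanism you propose for the Main Lemma is wrong, and this is a genuine gap. Your ``uniform transversality'' claim --- that when $w_\partial(C)<w_0$ every support plane of $\partial^+\CH(C)$ meets every support plane of $\partial^-\CH(C)$ --- is false regardless of how small $w_\partial(C)$ is. If $P^+$ supports $\partial^+\CH(C)$ then the open disk in $\CP^1$ bounded by $\partial_\infty P^+$ on the side away from $\CH(C)$ lies in $\Omega^+$, so $\partial_\infty P^+\subset\overline{\Omega^+}$; similarly $\partial_\infty P^-\subset\overline{\Omega^-}$. These two circles can meet only on $C$ and never transversally, hence never link, so $P^+$ and $P^-$ are disjoint (or at best asymptotic) in $\HH^3$. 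Consequently your local picture of $\Pi$ as ``orthogonal projection between two transverse totally geodesic planes'' is not available, and in particular it cannot supply the lower bound on distortion; the easy direction (nearest--point projection onto a convex set is $1$--Lipschitz in the ambient metric) does not control the induced path metric on the image either.

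The paper's substitute is a different three--plane statement: if $P,P'$ are two support planes of the \emph{same} boundary component, say $\partial^+\CH(C)$, and $Q$ is a support plane of $\partial^-\CH(C)$, and there are points $x\in P$, $x'\in P'$, $y\in Q$ with $d(x,y),d(x',y)\le w<w_0$, then $P\cap P'\neq\emptyset$ and contains a point $z$ with $d(x,z)+d(x',z)\le b(w)$ (Lemmas~\ref{lm:PP'Q} and~\ref{lem:roof}). The threshold $w_0=\cosh^{-1}(\sqrt2)$ arises from the extremal configuration of three pairwise asymptotic lines in a plane (an ideal triangle). Projecting the broken path $x\to z\to x'$ in $P\cup P'$ onto $\partial^+\CH(C)$ gives a path of length $\le b$, and this is exactly what bounds $d_+(\pi_+(y),\pi_+(y'))$ when $y,y'$ are close (Corollary~\ref{lm:b}); iterating yields the quasi--isometry. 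Note also that the paper does \emph{not} assert that $\pi_+$ is a homeomorphism or even continuous --- the nearest point need not be unique --- and works instead with coarse quasi--inverses; your bijectivity/degree argument is both unneeded and unlikely to go through as stated.
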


One key step in the proof of Theorem \ref{tm:small} is the following characterization of quasicircles in terms of a nearest point projection map from $\partial^+ \CH(C)$ to $\partial^- \CH(C)$.

\begin{introthm} \label{tm:proj}
  Let $C \subset \CP^1$ be a Jordan curve and let $\pi_+ \co \partial^+ \CH(C) \to \partial^- \CH(C)$ be a map sending each point $x \in \partial^+ \CH(C)$ to one of the (compactly many) nearest points on $\partial^- \CH(C)$. Then $C$ is a quasicircle if and only if $\pi_+$ is a quasi-isometry.
\end{introthm}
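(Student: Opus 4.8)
The plan is to prove both implications by relating the nearest-point projection $\pi_+$ to the structure of $\CH(C)$ as seen from its two boundary components. Throughout I would work in the universal cover, or rather directly with the two disks $\partial^\pm \CH(C)$ equipped with their induced path metrics, which by the work of Thurston and Epstein--Marden are complete hyperbolic metrics, and with their pleating laminations $\lambda^\pm$. The key geometric input is that the nearest-point projection $\partial^+\CH(C) \to \CH(C)$ and $\partial^-\CH(C)\to\CH(C)$ are $1$-Lipschitz for the induced metrics (a standard fact about convex sets in $\HH^3$), and more precisely that the normal flow from one boundary component towards the other is distance non-increasing along the way.

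For the forward direction, suppose $C$ is a quasicircle. Then $C$ is the limit set of a quasifuchsian group only after a quasiconformal deformation, but more elementarily: a quasicircle admits a bi-Lipschitz, in fact quasisymmetric, parametrization, and by a theorem of Epstein--Marden--Markovic (or by the a priori bounds of Thurston/Bridgeman--Canary) the two boundary components $\partial^\pm \CH(C)$ are uniformly bi-Lipschitz to each other and to any hyperbolic plane spanning $C$. Concretely, one knows that for a quasicircle the convex hull has uniformly bounded width $w(C) < \infty$ by the discussion in the introduction, so every point of $\partial^+\CH(C)$ is within bounded distance of $\partial^-\CH(C)$; combined with the $1$-Lipschitz property of $\pi_+$ this shows $\pi_+$ is coarsely surjective and coarsely Lipschitz. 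The remaining point is the lower quasi-isometry bound: $d(\pi_+(x),\pi_+(y)) \geq \tfrac{1}{K} d(x,y) - C$. Here I would use that $\pi_+$ is the composition of the normal flow for time at most $w(C)$ with projections, and that along a bounded normal flow between two convex surfaces with bounded principal curvatures (bounded because the width is bounded and away from the bending locus the surfaces are ruled) the metric distortion is bounded above and below. This is the step I expect to require the most care, since the boundary of the convex hull is only $C^{1,1}$, not smooth, and the bending laminations can be complicated; the cleanest route is probably to bound the distortion of the normal flow using the width bound directly, via a comparison with the model situation of two parallel planes or two planes meeting along a geodesic, rather than through curvature estimates.

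For the converse, suppose $\pi_+$ is a $(K,C)$-quasi-isometry. First, quasi-isometry implies coarse surjectivity, which forces $w_\partial(C) < \infty$ and indeed $w(C) < \infty$ (using that any point of $\CH(C)$ lies on a normal segment from $\partial^+$ to $\partial^-$ whose length is at most $\sup_{x\in\partial^+} d(x,\partial^-) + \text{(symmetric term)}$, controlled by coarse surjectivity of $\pi_+$ and $\pi_-$). So the width is finite. Then I would invoke Theorem~\ref{tm:small}: if I can show that finite width of $\pi_+$-quasi-isometry type actually forces the boundary width to be \emph{small}, I would be done — but that is false in general (Theorem~\ref{example2} shows finite width alone does not suffice, though it does not directly address quasi-isometry of $\pi_+$). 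So instead the converse must use the quasi-isometry hypothesis more seriously: the point is that $\pi_+$ being a quasi-isometry means $\partial^+\CH(C)$ and $\partial^-\CH(C)$ are quasi-isometric complete hyperbolic planes in a way compatible with their boundary identifications with $C$. I would then argue that this forces the identity map $\partial^+\CH(C) \to \partial^-\CH(C)$ (both boundaries being topological disks bounding $C$) to extend to a quasisymmetric map of the circle at infinity, hence $C$ is a quasicircle by the Ahlfors--Beurling extension. Making this precise requires controlling how $\pi_+$ behaves near the ideal boundary: the nearest-point projection extends continuously to the identity on $C$ at infinity, and a quasi-isometry between two copies of $\HH^2$ that is a bounded distance from the boundary identification must be a bounded distance from a quasi-isometry in the standard sense, whose boundary extension is quasisymmetric. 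The main obstacle in this direction is making the passage from "quasi-isometry of the disks" to "quasisymmetry of the boundary circle $C \subset \CP^1$" rigorous, since a priori $C$ is just a Jordan curve and one must produce the global quasiconformal homeomorphism of $\CP^1$; I would handle this by building a quasiconformal map from the normal flow between $\partial^+\CH(C)$ and $\partial^-\CH(C)$ together with the Epstein map extensions into the two complementary regions of $\CP^1\setminus C$, using the quasi-isometry bound on $\pi_+$ to control the dilatation.
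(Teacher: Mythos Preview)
Your outline points in the right direction on both sides, but each direction has a genuine gap that the paper fills with a specific tool you have not identified.

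\textbf{Forward direction.} The ``$1$-Lipschitz property of $\pi_+$'' you invoke holds for the ambient $\HH^3$ distance, not for the intrinsic path metrics $d_\pm$ on $\partial^\pm\CH(C)$, and it is the latter that enter the quasi-isometry statement. What you actually need is that each surface $\partial^\pm\CH(C)$ is $(L',A')$ quasi-isometrically \emph{embedded} in $\HH^3$, i.e.\ $d_+(x,y)\le L' d_{\HH^3}(x,y)+A'$, with constants depending only on the quasicircle constant. Once you have this, both the upper and lower bounds for $\pi_+$ are a three-line computation using only the width bound and the triangle inequality; no normal-flow or curvature argument is needed. The paper obtains the quasi-isometric embedding via the Tukia--V\"ais\"al\"a theorem (a $K$-quasiconformal map of $\CP^1$ extends to an $L$-bilipschitz diffeomorphism of $\HH^3$) together with a Morse-lemma argument comparing $\partial^\pm\CH(C)$ to an equidistant surface from the model plane; this replaces, and is much cleaner than, the curvature-of-the-pleated-surface approach you flag as delicate.

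\textbf{Converse direction.} Your high-level plan (quasi-isometry of the two disks $\Rightarrow$ quasisymmetric boundary map $\Rightarrow$ quasicircle via Ahlfors reflection) is exactly the paper's, but the step you label as ``the main obstacle'' is handled by a tool you do not mention: Sullivan's theorem (see Epstein--Marden) gives, for \emph{any} Jordan curve, uniformly bilipschitz maps $b^\pm\co\partial^\pm\CH(C)\to\Omega^\pm$ between the pleated boundaries and the complementary domains equipped with their Poincar\'e metrics, extending to the identity on $C$. Conjugating $\pi_+$ by $b^\pm$ and then by uniformizing maps $u_\pm\co\Omega^\pm\to\HH^2$ yields a quasi-isometry $\HH^2\to\HH^2$ whose boundary extension is quasisymmetric and agrees with $u_+\circ u_-^{-1}$ on $\partial\HH^2$; replacing it by the quasiconformal extension and conjugating back gives a quasiconformal map $\Omega^-\to\Omega^+$ equal to the identity on $C$, so Ahlfors' reflection argument finishes. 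Your proposed route through Epstein maps and a normal-flow interpolation would have to reproduce the content of Sullivan's theorem and does not obviously give a quasiconformal (as opposed to merely quasi-isometric) comparison of the complementary domains.
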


\subsection{Motivations from anti-de Sitter geometry}\label{AdS}

The main motivation for the investigations presented here can be found in analog, but somewhat simpler statements, that are known in anti-de Sitter geometry.

The $3$--dimensional anti-de Sitter (AdS) space $\AdS^3$ is the Lorentzian cousin of the $3$--dimensional hyperbolic space $\HH^3$. It is the model space for Lorentzian geometry of constant curvature $-1$ in dimension $2+1$. The projective boundary $\partial \AdS^3$ of $\AdS^3$ is a conformal Lorentzian space analogous to the Riemann sphere $\CP^1$ which is known as the Einstein space $\Ein^{1,1}$. The null lines on $\Ein^{1,1}$ determine two transverse foliations by circles which endow $\Ein^{1,1}$ with a product structure $\Ein^{1,1} \cong \RP^1 \times \RP^1$. 

Convex hull constructions in $\AdS^3$ are more subtle than in hyperbolic space because $\AdS^3$, differently from $\HH^3$, is not a convex space. In particular, an arbitrary collection of points in $\partial \AdS^3$ does not have a well-defined convex hull in $\AdS^3$. The Jordan curves $C$ in $\partial \AdS^3$ for which the convex hull $\CH(C)$ in $\AdS^3$ is well-defined are the  \emph{acausal meridians}, namely those Jordan curves arising as the graph of an orientation-preserving homeomorphism of $\RP^1$, and their limits (called achronal meridians). Amongst these, the natural analogue of quasicircles,  called here \emph{Einstein quasicircles} (as in \cite{convexhull}), are the graphs of orientation-preserving quasisymmetric homeomorphisms. 

In~\cite{maximal}, Bonsante-Schlenker define the \emph{width} $w_{\AdS}(C)$ of an acausal meridian~$C$ in terms of the timelike distances between points of the future boundary $\partial^+ \CH(C)$ of the convex hull and points of the past boundary $\partial^- \CH(C)$. Here is an equivalent definition (rewritten slightly to make the analogy with~\eqref{eqn:sup} transparent):
\begin{align}\label{eqn:width-AdS}
w_{\AdS}(C) = \sup_{x \in \mathrm{CH}(C)} \left\{d^T(x,\partial^-\mathrm{CH}(C)) + d^T(x,\partial^+\mathrm{CH}(C))\right\},
\end{align}
where $d^T(x, \partial^\pm \CH(C))$ denotes the maximum timelike distance between the point $x$ and any point $y$ in  $\partial^\pm \CH(C)$ which is causally related to $x$. Note that in anti-de Sitter geometry, we have that $w_{\AdS}(C)$ is also equal to $$w_{\AdS}(C) = \max\left(\sup_{x \in \partial^+\CH(C)} d^T(x,\partial^-\CH(C)), \sup_{x \in \partial^-\CH(C)}d^T(x,\partial^+\CH(C))\right)~, $$
as can be seen from the inverse triangle inequality for time-like triangles.
(As mentioned above, in the hyperbolic case the two definitions are different.) Note also that the width of an acausal meridian $C$ trivially satisfies $w_{\AdS}(C) \leq \pi/2$. In fact, Bonsante-Schlenker~\cite[Theorem 1.12]{maximal} characterize Einstein quasicircles as those for which the width is strictly less than the maximum possible. 

\begin{Proposition}[Bonsante--Schlenker]\label{pr:w_ads}
 An acausal meridian $C \subset \partial\AdS^3 = \Ein^{1,1}$ is an Einstein quasicircle if and only if $w_{\AdS}(C)<\frac{\pi}{2}$.
\end{Proposition}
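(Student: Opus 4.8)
The plan is to route the argument through an equivalent description of the width and then treat the two implications separately, the obstruction direction being the delicate one. First I would reformulate the width. Arguing exactly as in the paragraph preceding the statement — the reverse triangle inequality for timelike triangles, plus convexity of $\CH(C)$ and the fact that a timelike geodesic issuing from an interior point of $\CH(C)$ exits through $\partial^\pm\CH(C)$ — one gets
\[
w_{\AdS}(C)\;=\;\sup\{\,d^T(x,y)\ :\ x,y\in\CH(C)\text{ and }x\le y\text{ causally}\,\}.
\]
Since timelike distances in $\AdS^3$ are bounded by $\pi/2$, with $d^T(x,y)=\pi/2$ exactly when $y$ lies on the dual spacelike plane $x^\star$ of $x$, the inequality $w_{\AdS}(C)<\pi/2$ becomes: there is $\delta>0$ with $\CH(C)$ staying at timelike distance $\ge\delta$ from the dual plane of each of its points. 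Equivalently (dualizing), the spacelike support planes of $\partial^+\CH(C)$ and $\partial^-\CH(C)$ — whose causal pasts and futures cut out $\CH(C)$ — stay uniformly far from degenerating to null planes; that is, $\CH(C)$ is ``uniformly spacelike''.

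For the implication ``$w_{\AdS}(C)<\pi/2\Rightarrow$ Einstein quasicircle'', I would use the uniform spacelikeness to produce a spacelike surface $S$ spanning $C$ whose shape operator $B$ has principal curvatures in $[-k_0,k_0]$ with $k_0=k_0(\delta)<1$; the natural candidate is the maximal surface inside $\CH(C)$, whose second fundamental form is controlled by the width through the Riccati equation governing the normal flow between $\partial^-\CH(C)$ and $\partial^+\CH(C)$. Such a surface carries two hyperbolic metrics $g_\pm=I\bigl((\mathrm{Id}\pm JB)\,\cdot\,,(\mathrm{Id}\pm JB)\,\cdot\,\bigr)$, and the identity map $(S,g_+)\to(S,g_-)$ is $K(k_0)$--quasiconformal precisely because $\|B\|$ is bounded away from $1$; lifting to $\widetilde S\cong\Hyp^2$ and passing to boundary values, this map extends to a quasisymmetric homeomorphism of $\RP^1$ whose graph is $C$. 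This is the anti-de Sitter counterpart of the Sullivan--Epstein--Marden picture underlying Theorem~\ref{tm:proj}.

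For the converse, suppose $C=\mathrm{graph}(\phi)$ with $\phi$ $M$--quasisymmetric but $w_{\AdS}(C)=\pi/2$. By the reformulation there are causally related $x_n\le y_n$ in $\CH(C)$ with $d^T(x_n,y_n)\to\pi/2$. The identity component of $\isom(\AdS^3)$ is $\PSL(2,\RR)\times\PSL(2,\RR)$, acting on $\Ein^{1,1}=\RP^1\times\RP^1$ by $(a,b)\cdot\mathrm{graph}(\phi)=\mathrm{graph}(b\phi a^{-1})$, and pre- and post-composition by M\"obius maps preserves the quasisymmetry constant; so after applying suitable isometries $g_n$ we may assume $g_n(x_n)\to x_\infty$, $g_n(y_n)\to y_\infty$ while each $g_n\cdot C$ is an $M$--quasisymmetric meridian. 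By normality of $M$--quasisymmetric families and continuity of the convex-hull construction on meridians, a subsequence of $g_n\cdot C$ converges to an $M$--quasisymmetric meridian $C_\infty$ with $x_\infty,y_\infty\in\CH(C_\infty)$ and $d^T(x_\infty,y_\infty)=\pi/2$, so $y_\infty\in x_\infty^\star$. The segment $[x_\infty,y_\infty]\subset\CH(C_\infty)$ has length $\pi/2$, and since every point of $x_\infty^\star$ is at timelike distance $\pi/2$ from $x_\infty$, the reverse triangle inequality (with global hyperbolicity of the domain of dependence of $C_\infty$) forces $x_\infty^\star$ to be a spacelike support plane of $\partial^+\CH(C_\infty)$ touching it at $y_\infty$, and symmetrically $y_\infty^\star$ a support plane of $\partial^-\CH(C_\infty)$ at $x_\infty$. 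Thus $C_\infty$ is squeezed between the two round circles $\partial_\infty x_\infty^\star$ and $\partial_\infty y_\infty^\star$, which sit at the maximal possible separation $\pi/2$; one then checks that such a meridian cannot be uniformly quasisymmetric — the two bounding circles pinch $C_\infty$ and force a point where the quasisymmetry modulus of $\phi_\infty$ vanishes — contradicting $\phi_\infty\in\homeoqs$.

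The main difficulty, in both directions, is the quantitative dictionary between the timelike width of $\CH(C)$ and the distortion of cross-ratios under $\phi$: upgrading ``uniformly spacelike'' to an honest quasiconformal extension (equivalently, bounding the principal curvatures of the interpolating surface by the width) in the first direction, and, in the second, pinning down why a meridian realizing $w_{\AdS}=\pi/2$ must fail to be uniformly quasisymmetric. I expect the latter — matching the ``$\pi/2$'' threshold to the breakdown of the quasisymmetry inequality at the pinch locus — to be the step requiring the most care.
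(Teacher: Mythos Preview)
The paper does not prove this proposition. It is quoted from \cite[Theorem 1.12]{maximal} purely as motivation for the hyperbolic questions that occupy the rest of the paper, and no argument is given here. So there is no ``paper's own proof'' to compare your attempt against.

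That said, your outline follows the actual strategy of \cite{maximal} rather closely: the implication $w_{\AdS}(C)<\pi/2\Rightarrow$ Einstein quasicircle does go through the maximal surface spanning $C$, a bound on its principal curvatures, and the pair of hyperbolic metrics $g_\pm$ it carries; the converse does go through a compactness/renormalization argument using the $\PSL(2,\R)\times\PSL(2,\R)$ action. Two places in your sketch are not yet proofs, and you have correctly identified them as the hard steps. First, the bound $\|B\|\le k_0(\delta)<1$ on the maximal surface is the entire content of that direction; it is not a formal consequence of ``the Riccati equation governing the normal flow'' but requires a genuine maximum-principle argument on the Gauss equation, carried out in \cite{maximal}. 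Second, in the converse, once you have trapped $C_\infty$ between $\partial_\infty x_\infty^\star$ and $\partial_\infty y_\infty^\star$ you still have to finish: the conclusion is that $C_\infty$ contains a lightlike segment (equivalently, $\phi_\infty$ fails to be a homeomorphism on an interval), which is strictly stronger than ``the quasisymmetry modulus vanishes at a point'' and is what actually gives the contradiction. The phrase ``one then checks'' is hiding exactly this.
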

 
Theorem~\ref{example2} shows that the naive analogue of Proposition~\ref{pr:w_ads} in hyperbolic geometry is false, in general.

\subsection{An analogy with minimal surfaces} 

It might be useful to point out an analogy between the results presented here and the relation between quasicircles and minimal (resp. maximal) surfaces in hyperbolic (resp. anti-de Sitter) geometry.
\begin{itemize}
\item Given a Jordan curve $\Gamma$ in $\partial\HH^3$, it always bounds a (possibly non-unique) minimal surface \cite{anderson:minimal2}. If this minimal surface has principal curvatures $|k_i|\leq 1-\epsilon<1$, then $\Gamma$ must be a quasicircle \cite{epstein:reflections}, but there are quasicircles that do not bound any minimal surface with curvature less than $1$. Seppi \cite{seppi:disks} recently proved that the principal curvatures of the minimal surface can be bounded from above by the quasisymmetric constant of the quasicircle, if it is small enough.
  \item Given an acausal curve $\Gamma\subset \partial \AdS^3$, it always bounds a maximal surface with principal curvatures at most $1$, and $\Gamma$ is a quasicircle if and only if it bounds a maximal surface with principal curvatures uniformly less than $1$ \cite{maximal}.
\end{itemize}
This analogy suggests natural questions, for instance whether a quasicircle in $\partial \HH^3$ with width less than an explicit constant (perhaps $w_0$) bounds a minimal surface with principal curvatures less than $1$.

\section{Width does not characterize quasicircles in $\CP^1$}\label{width}

\subsection{Quasicircles in $\partial \HH^3$} 

A Jordan curve $C$ in $\CP^1$ is called a $K$--\emph{quasicircle} if $C$ is the image of $\RP^1$ under a $K$--quasiconformal homeomorphism of $\CP^1$, see \cite{ahlfors}. Ahlfors gave a convenient characterization of hyperbolic quasicircles in terms of distance between points or `neck pinching'. 
\begin{Proposition}[Ahlfors \cite{ahlfors}]\label{alf}
  A planar Jordan curve $\Gamma\subset \C$ is a $K$--quasicircle if and only if it satisfies the $K$--{\em bounded turning condition}: there is a constant $K \geq 1$ such that for each pair of points $x, y \in \Gamma$ we have that 
  $$ \mathrm{diam}(\Gamma[x,y]) \leq K |x-y|,$$
where $\Gamma[x,y]$ is the subarc of $\Gamma$ joining $x$ and $y$ with smaller diameter.
\end{Proposition}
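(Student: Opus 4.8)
The plan is to prove the two implications separately. As usual the constant $K$ in the bounded turning condition and the quasiconformality constant are not literally the same, but each can be bounded in terms of the other, so the statement should be read as ``$\Gamma$ is a quasicircle $\iff$ $\Gamma$ has bounded turning'', with quantitative control of constants. Throughout, since $\Gamma\subset\CC$ is bounded we have $\infty\notin\Gamma$.

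For the forward implication, suppose $\Gamma$ is a $K$--quasicircle. Since $\RP^1$ and $\bS^1$ are M\"obius equivalent, I would write $\Gamma=f(\bS^1)$ for a $K$--quasiconformal homeomorphism $f$ of $\CP^1$ and, after pre-composing $f$ with a suitable M\"obius transformation preserving $\bS^1$, arrange that $f(\infty)=\infty$ (possible since $f^{-1}(\infty)\notin\bS^1$). Then $f|_\CC$ is a $K$--quasiconformal self-map of $\CC$ fixing $\infty$, hence $\eta_K$--quasisymmetric for a homeomorphism $\eta_K$ of $[0,\infty)$ depending only on $K$, by the classical distortion theory of plane quasiconformal maps \cite{ahlfors}. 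Given $x,y\in\Gamma$, set $a=f^{-1}(x)$, $b=f^{-1}(y)$ and let $\beta$ be the subarc of $\bS^1$ with endpoints $a,b$ of smaller diameter, so $\mathrm{diam}(\beta)\le c_0|a-b|$ for a universal constant $c_0$. Quasisymmetry gives $|f(z)-x|\le\eta_K(c_0)|x-y|$ for every $z\in\beta$, hence $\mathrm{diam}(f(\beta))\le 2\eta_K(c_0)|x-y|$. Now $f(\beta)$ is one of the two subarcs of $\Gamma$ with endpoints $x,y$ and $\Gamma[x,y]$ is the one of smaller diameter, so in either case $\mathrm{diam}(\Gamma[x,y])\le 2\eta_K(c_0)|x-y|$; thus $\Gamma$ satisfies the bounded turning condition with a constant depending only on $K$.

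For the reverse implication, assume $\Gamma$ has the $K$--bounded turning condition, and let $\Omega^+,\Omega^-$ be the two Jordan domains with $\CP^1\setminus\Gamma=\Omega^+\sqcup\Omega^-$, with Riemann maps $g^\pm\colon\D^\pm\to\Omega^\pm$ (here $\D^+=\D$ and $\D^-$ its exterior). By Carath\'eodory's theorem the $g^\pm$ extend to homeomorphisms $\overline{\D^\pm}\to\overline{\Omega^\pm}$, so $h:=(g^-)^{-1}\circ g^+$ is a homeomorphism of $\bS^1$, the conformal welding map. I claim it suffices to show that $h$ is quasisymmetric with constant depending only on $K$: by the Beurling--Ahlfors extension theorem, $h$ then extends to a $K'$--quasiconformal self-map $\wt h$ of $\CP^1$ preserving $\bS^1$ and restricting to $h$ there, with $K'$ depending only on $K$; setting $F=g^+$ on $\overline{\D}$ and $F=g^-\circ\wt h$ on $\overline{\D^-}$ gives a well defined homeomorphism of $\CP^1$ (the two pieces agree on $\bS^1$, since $g^-\circ h=g^+$ there), which is conformal on $\D$, $K'$--quasiconformal on $\D^-$, and continuous across the zero-area circle $\bS^1$, hence $K'$--quasiconformal on $\CP^1$ by removability of circles; and $F(\bS^1)=\Gamma$, so (pre-composing with a M\"obius map sending $\RP^1$ to $\bS^1$) $\Gamma$ is a $K'$--quasicircle.

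The hard part is the claim that bounded turning forces $h$ to be quasisymmetric — this is where the hypothesis really enters. I would use that a homeomorphism of $\bS^1$ is quasisymmetric if and only if it distorts the conformal modulus of inscribed quadrilaterals by a bounded factor, so that it is enough to show: for any four points $p_1,\dots,p_4\in\bS^1$ in cyclic order, the modulus of the quadrilateral $(\D;p_1,\dots,p_4)$ and that of $(\D^-;h(p_1),\dots,h(p_4))$ are comparable up to a constant depending only on $K$. Transporting by $g^+$ and $g^-$ and using conformal invariance of the modulus, with $w_i=g^+(p_i)\in\Gamma$, this becomes a statement purely about $\Gamma$: the modulus of the quadrilateral cut out of $\Omega^+$ by the four marked points $w_1,\dots,w_4$ of $\Gamma$ is comparable to the modulus of the quadrilateral cut out of $\Omega^-$ by the \emph{same} four points, with the same subarcs of $\Gamma$ as sides. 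The bounded turning condition is exactly what makes this comparison work: it implies that near every point of $\Gamma$ and at every scale both complementary domains are present and connected at that scale (in particular $\Gamma$ is linearly locally connected), and this allows one to transfer an extremal metric — dually, a separating curve family — from one side of $\Gamma$ to the other with bounded distortion, by a scale-by-scale ``reflection'' across $\Gamma$. Carrying out this modulus comparison rigorously, i.e.\ extracting quasiconformal-reflection-type control from the purely metric bounded turning hypothesis, is the technical core of the theorem; everything else is formal or classical.
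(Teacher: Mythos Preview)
The paper does not give a proof of this proposition: it is stated with attribution to Ahlfors and simply cited as a known result, so there is nothing in the paper to compare your argument against.

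As for your argument on its own merits: the forward implication is fine. Quasiconformal self-maps of $\CC$ fixing $\infty$ are $\eta_K$--quasisymmetric, the minor arc of $\bS^1$ has diameter equal to the chord, and the rest follows as you wrote.

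For the reverse implication, your strategy via conformal welding is a legitimate route, and you correctly isolate the crux: bounded turning of $\Gamma$ must force the two-sided modulus comparison (equivalently, quasisymmetry of the welding homeomorphism $h$). However, what you have written for this step is a description of what needs to be shown rather than a proof. The phrases ``transfer an extremal metric \ldots\ by a scale-by-scale `reflection' across $\Gamma$'' and ``extracting quasiconformal-reflection-type control from the purely metric bounded turning hypothesis'' name the difficulty without resolving it. This is exactly the substantive content of Ahlfors' theorem: one must actually build a quasiconformal reflection across $\Gamma$ (or equivalently prove the modulus estimate directly), and this construction uses bounded turning in an essential, hands-on way---for instance via Ahlfors' three-point condition combined with an explicit geometric extension, or via Gehring's characterisation through uniform domains. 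Until that construction is supplied, the reverse implication remains a sketch with the main work still to do. Everything else in your write-up---Beurling--Ahlfors extension, removability of $\bS^1$, the gluing of $g^+$ and $g^-\circ\widetilde h$---is correctly assembled around that missing core.
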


Another well-known statement that will be used below is the compactness of uniformly quasiconformal maps, see \cite[Theorems II.5.1 and II.5.3]{lehto-virtanen}, from which the following statement follows.

\begin{lemma} \label{lm:compactness}
Any sequence of uniform quasicircles has a subsequence converging either to a quasicircle or to a point. 
\end{lemma}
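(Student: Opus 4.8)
\medskip
\noindent\emph{Proof sketch.} The plan is to reduce the lemma to the compactness recalled just above — the family of $K$-quasiconformal self-homeomorphisms of $\CP^1$ fixing three given points is compact, with every limit again a $K$-quasiconformal homeomorphism — after renormalizing the given quasicircles so that they all pass through $0,1,\infty$. Throughout, distances and diameters refer to the spherical metric on $\CP^1$, and convergence of curves is in the Hausdorff topology on closed subsets. Let $(C_n)$ be a sequence of $K$-quasicircles, $C_n=f_n(\RP^1)$ with $f_n$ a $K$-quasiconformal homeomorphism of $\CP^1$. If $\liminf_n\mathrm{diam}(C_n)=0$, then after passing to a subsequence with $\mathrm{diam}(C_n)\to 0$ and with some point $x_n\in C_n$ converging to $x_\infty$ (compactness of $\CP^1$), one has $C_n\to\{x_\infty\}$ since $C_n$ lies in the ball of radius $\mathrm{diam}(C_n)$ about $x_n$ — this is the ``point'' case. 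So it suffices to treat the case $\mathrm{diam}(C_n)\ge\delta>0$ for all $n$ and produce a subsequence converging to a quasicircle.

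The elementary fact behind the renormalization is that a connected subset of $\CP^1$ of diameter $D$ contains three points with pairwise distances $\ge D/2$: pick $x,y$ realizing $D$; since $z\mapsto d(z,x)$ is continuous and takes the values $0$ and $D$ on the connected set, some point $z'$ of it has $d(z',x)=D/2$, whence $d(z',y)\ge D-D/2=D/2$ by the triangle inequality. Applying this to each $C_n$, I would choose $x_n,y_n,z_n\in C_n$ with pairwise distances $\ge\eta:=\delta/2$ and let $g_n\in\PSL(2,\C)$ be the Möbius transformation with $g_n(x_n,y_n,z_n)=(0,1,\infty)$. Since $\PSL(2,\C)$ acts simply transitively on ordered triples of distinct points of $\CP^1$, hence is homeomorphic to the space of such triples, and since the set of triples with pairwise distance $\ge\eta$ is compact there, the $g_n$ lie in a compact subset of $\PSL(2,\C)$; passing to a subsequence, $g_n\to g_\infty$ and $g_n^{-1}\to g_\infty^{-1}$ uniformly on $\CP^1$. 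Set $\tilde C_n:=g_n(C_n)$: since $g_n$ is conformal, $g_n\circ f_n$ is still $K$-quasiconformal, so $\tilde C_n$ is a $K$-quasicircle, and it passes through $0,1,\infty$.

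Next I would normalize the domain. Let $a_n,b_n,c_n\in\RP^1$ be $f_n^{-1}(x_n),f_n^{-1}(y_n),f_n^{-1}(z_n)$, relabelled (adjusting the labelling of $x_n,y_n,z_n$ correspondingly) so that they are positively cyclically ordered on $\RP^1$, and pick $m_n\in\PSL(2,\R)$ with $m_n(0,1,\infty)=(a_n,b_n,c_n)$, which exists since $\PSL(2,\R)$ is transitive on positively ordered triples. Then $H_n:=g_n\circ f_n\circ m_n$ is a $K$-quasiconformal self-homeomorphism of $\CP^1$ with $H_n(\RP^1)=\tilde C_n$ and $H_n(0)=0$, $H_n(1)=1$, $H_n(\infty)=\infty$. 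By the cited compactness, after passing to a subsequence $H_n\to H_\infty$ uniformly on $\CP^1$ with $H_\infty$ a $K$-quasiconformal homeomorphism; uniform convergence on the compact set $\RP^1$ then gives $\tilde C_n=H_n(\RP^1)\to H_\infty(\RP^1)=:\tilde C_\infty$, a $K$-quasicircle. Finally, using $g_n^{-1}\to g_\infty^{-1}$ uniformly, $C_n=g_n^{-1}(\tilde C_n)\to g_\infty^{-1}(\tilde C_\infty)=(g_\infty^{-1}\circ H_\infty)(\RP^1)$, which is a quasicircle because $g_\infty^{-1}\circ H_\infty$ is $K$-quasiconformal. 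This produces the ``quasicircle'' case and completes the plan.

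I expect the only genuine obstacle to be the non-compactness of $\PSL(2,\C)$: the normalizing Möbius maps $g_n$ can a priori escape to infinity, and this escape happens precisely when $\mathrm{diam}(C_n)$ fails to be bounded below — which is exactly the situation producing the ``point'' alternative — so the diameter lower bound, exploited through the ``three spread-out points'' observation, is what rules it out. Everything else is routine: invariance of the quasiconformal constant under pre- and post-composition with Möbius transformations, the existence of $m_n\in\PSL(2,\R)$ realizing a prescribed positively ordered triple of $\RP^1$, and the implication ``uniform convergence of homeomorphisms of $\CP^1$ $\Longrightarrow$ Hausdorff convergence of the images of $\RP^1$''.
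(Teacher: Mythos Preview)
Your argument is correct and is exactly the standard derivation of this statement from the normal-family results for $K$-quasiconformal maps (Lehto--Virtanen, Theorems~II.5.1 and~II.5.3). The paper itself does not give a proof: it simply cites those theorems and states that the lemma follows, so your write-up supplies precisely the details the paper omits --- the diameter dichotomy, the three-point normalization to trap the M\"obius correctors $g_n$ in a compact set, and the appeal to compactness of normalized $K$-quasiconformal maps.

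One cosmetic point: you define $g_n$ before the relabelling of $x_n,y_n,z_n$ that ensures their $f_n$-preimages are positively cyclically ordered on $\RP^1$; strictly speaking $g_n$ should be defined after that relabelling, but since the compactness of $\{g_n\}$ depends only on the pairwise-distance lower bound (not on the ordering), nothing is lost.
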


\subsection{A motivating example}\label{proof_example1}

In this section we prove the following result. This will motivate the construction we will use to prove Theorem \ref{example2}.

\begin{Theorem} \label{example1}
There exist $M>0$ and a sequence of $K_n$--quasicircles $C_n$ in $\C \mathbb{P}^1$ with $w(C_n) < M$ and such that $C_n$ converges in the Hausdorff topology to a limit $C$ which is neither a Jordan curve nor a point.
\end{Theorem}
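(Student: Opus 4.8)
The plan is to build the curves $C_n$ by a cut-and-paste construction inside $\CP^1$, modeled on a long thin "neck" whose two sides are pinched together. Concretely, I would start from a fixed round circle (say $\RP^1$) and modify it in a shrinking family of disjoint disks $D_n$ so that the modified curve $C_n$ develops a very narrow but still genuinely two-sided neck: two long nearly-parallel arcs separated by a distance $\varepsilon_n \to 0$ in the spherical metric, but with arc length along $C_n$ between the two "ends" of the neck bounded below by a fixed constant. By keeping the geometry of the modification self-similar (a fixed template rescaled into $D_n$, with the scale of the pinch going to zero slightly faster than the diameter of $D_n$), one arranges that each $C_n$ satisfies the bounded turning condition of Proposition \ref{alf} with a uniform constant $K$ — hence $C_n$ is a $K_n$-quasicircle with $K_n \le K$ — while in the Hausdorff limit the neck closes up and $C$ acquires a self-tangency (or pinches off a disk), so that $C$ is neither a Jordan curve nor a point.

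The second and main point is the uniform bound $w(C_n) < M$ on the width. Here I would argue geometrically in $\HH^3$ using the convex hull. The key observation is that $\partial^\pm \CH(C_n)$ are, by Thurston's description, hyperbolic planes bent along measured laminations, and the width measures (roughly) how far the two bending surfaces can get from one another. Away from the neck region, $C_n$ agrees with a fixed quasicircle, so the convex hull is uniformly comparable there; inside the neck, the two arcs of $C_n$ are close together in $\CP^1$, which forces $\CH(C_n)$ to be "thin" — the supporting planes on the two sides are nearly the same totally geodesic plane — and one gets a uniform bound on $d(x,\partial^+) + d(x,\partial^-)$ for $x$ in that part of the hull. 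Combining the two regions gives $w(C_n) \le M$ for a uniform $M$. I expect the careful estimate near the neck to be the main obstacle: one must quantify how the presence of a nearby arc of $C_n$ constrains the convex hull boundary from both sides, uniformly as $\varepsilon_n \to 0$, and simultaneously make sure the self-similar template genuinely produces a Hausdorff limit that fails to be a Jordan curve (rather than, say, collapsing more than expected or remaining embedded).

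Finally I would verify the Hausdorff convergence and identify the limit: since the modifications are supported in disks $D_n$ whose diameters shrink to zero and accumulate only at a single point $p$, the curves $C_n$ converge in the Hausdorff topology to a set $C$ that coincides with the fixed base curve outside every neighborhood of $p$ and, near $p$, contains the limiting pinched configuration. By design this $C$ has a point of self-contact, so it is not a Jordan curve; it plainly contains the fixed base arc, so it is not a point. This establishes all the assertions of Theorem \ref{example1}, and the self-similar neck template is exactly the feature that will be iterated (at a sequence of scales) in Section \ref{width} to promote this into the single curve of finite width of Theorem \ref{example2}.
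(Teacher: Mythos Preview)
There is a genuine and fatal gap. You assert that the curves $C_n$ satisfy Ahlfors' bounded turning condition with a \emph{uniform} constant $K$, so that $K_n\le K$ for all $n$. But this is incompatible with the conclusion of the theorem: by the compactness of uniform quasicircles (Lemma~\ref{lm:compactness}), any Hausdorff limit of a sequence of $K$--quasicircles is either a quasicircle or a point. So if your $K_n$ are bounded, the limit $C$ \emph{must} be a Jordan curve or a point, contradicting what you set out to prove. The paper's construction is designed precisely so that $K_n\to\infty$ (this is noted explicitly after the description of $C_\infty$), and the whole content of the theorem is that the width can nevertheless stay bounded.

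Two secondary issues reinforce this. First, your description of the Hausdorff limit is inconsistent: if the modifications are supported in disks $D_n$ whose diameters shrink to zero and accumulate at a single point $p$, then $C_n\to\RP^1$ in the Hausdorff topology, which \emph{is} a Jordan curve; there is no ``limiting pinched configuration'' left over at $p$. To get a non-Jordan limit you need the modification to persist at a definite scale while the pinch closes, as in the paper's construction where the circles $F_1,F_2$ are fixed and only $Q_A^n,Q_B^n$ move. Second, your width argument (``near the neck the two arcs are close, so the convex hull is thin'') is not the right heuristic and is too vague to be an argument. The paper controls the width not by local thinness but via Proposition~\ref{charac_bddwid}: one shows that under \emph{any} sequence $g_n\in\PSL(2,\C)$, either $g_n(C_n)$ collapses to a point or both complementary regions contain fixed open sets (the $\epsilon$--bigons of some surviving arc). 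This is what makes the width bound robust even though the quasicircle constants blow up.
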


The proof will give an explicit construction of such a sequence $C_n$. We will verify that each $C_n$ is a quasicircle using Ahlfors' criterion (Proposition~\ref{alf}).
Note that the quasicircle constants $K_n$ must tend to infinity -- indeed if $K_n$ were bounded, by standard compactness properties of quasicircles (Lemma \ref{lm:compactness}),
the limit of $C_n$ would be either a Jordan curve or a point.
The bounded width property will come from the following.

\begin{Proposition}\label{charac_bddwid}
  Let $C_n$ be a sequence of Jordan curves in $\C = \C \mathbb{P}^1\setminus \{\infty\}$ and let $\Omega_n^-$ and $\Omega_n^+$ be the ``external'' and ``internal'' complementary regions. Assume that for any sequence $g_n \in \PSL(2,\C)$ one of the following happens (up to passing to a subsequence):
  \begin{enumerate}
    \item either $g_n(C_n)$ converges to a point;
    \item or $g_n$ does not ``squeeze'' the complementary regions, in the sense that there are two open subsets $U_-$ and $U_+$ of $\C \mathbb{P}^1$ such that $U_- \subset g_n(\Omega_n^-)$ and $U_+ \subset g_n(\Omega_n^+)$ for all $n$.
  \end{enumerate}
  Then $w(C_n)< M$ for some $M$ independent of $n$. 
\end{Proposition}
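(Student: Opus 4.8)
The plan is to argue by contradiction, using that convex hulls and distances in $\HH^3$ are $\PSL(2,\C)$-equivariant in order to normalise the configuration so that the hypothesis can be applied, and then to extract a uniform bound from its alternative (2).

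Suppose the conclusion fails. After passing to a subsequence I may assume there are points $x_n \in \CH(C_n)$ with $d(x_n,\partial^+\CH(C_n)) + d(x_n,\partial^-\CH(C_n)) \ge n$. Fix a basepoint $o \in \HH^3$ and choose $g_n \in \PSL(2,\C)$ with $g_n(x_n)=o$. Setting $C_n' = g_n(C_n)$ and $\Omega_n'^{\pm} = g_n(\Omega_n^{\pm})$, the element $g_n$ acts as an isometry of $\HH^3$ carrying $\CH(C_n)$ onto $\CH(C_n')$ and the two boundary components to the two boundary components, so that $o \in \CH(C_n')$ and
\[
d\bigl(o,\partial^+\CH(C_n')\bigr) + d\bigl(o,\partial^-\CH(C_n')\bigr) \ge n .
\]
Now apply the hypothesis to the sequence $(g_n)$. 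Up to a further subsequence, either (1) $C_n'$ converges to a point $q \in \CP^1$, or (2) there are nonempty open sets $U_\pm \subseteq \CP^1$ with $U_\pm \subseteq \Omega_n'^{\pm}$ for all $n$. Alternative (1) is impossible here: if $C_n'$ converged to $q$, then for every $\varepsilon>0$ we would have $C_n' \subseteq \overline{B(q,\varepsilon)}$ for $n$ large, hence $\CH(C_n')$ contained in $\CH(\overline{B(q,\varepsilon)})$, the closed half-space bounded by the totally geodesic plane with ideal boundary $\partial B(q,\varepsilon)$; since these half-spaces shrink onto $q$ as $\varepsilon\to 0$, one of them misses $o$, contradicting $o \in \CH(C_n')$. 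So alternative (2) holds.

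The core of the argument is then to bound the left-hand side above using (2). I may assume $U_\pm$ are round open disks; being contained in $\Omega_n'^{+}$ and $\Omega_n'^{-}$ respectively, they are disjoint. Let $a,b$ be the centres of $U_+,U_-$ and $\gamma$ the geodesic of $\HH^3$ with endpoints $a,b$. For each $n$ the Jordan curve $C_n'$ separates $U_+$ from $U_-$; since $\HH^3\setminus\CH(C_n')$ has exactly two components, with ideal boundaries $\Omega_n'^{+}$ and $\Omega_n'^{-}$, the geodesic $\gamma$ (which near $a$ lies in the first component and near $b$ in the second) must cross $\CH(C_n')$, entering through one of $\partial^\pm\CH(C_n')$ at a point $p_n$ and leaving through the other at a point $q_n$. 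On the other hand $C_n'$ lies in the fixed set $\CP^1\setminus(U_+\cup U_-)$, so
\[
\CH(C_n') \;\subseteq\; \CH(\CP^1\setminus U_+)\cap\CH(\CP^1\setminus U_-),
\]
an intersection of two fixed closed half-spaces whose ideal boundary disks do not contain $a$, resp.\ $b$. Hence $\gamma$ leaves the first half-space near $a$ and the second near $b$, so $\gamma\cap\CH(C_n')$ is contained in a compact subsegment $J\subseteq\gamma$ independent of $n$; in particular $d(o,p_n),d(o,q_n)\le\rho:=\max_{z\in J}d(o,z)$. Combining with the displayed inequality above gives $n\le d(o,p_n)+d(o,q_n)\le 2\rho$ for all $n$ in the subsequence, which is absurd; hence $w(C_n)$ is uniformly bounded.

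The main obstacle, as I see it, is exactly this last step: converting ``$g_n$ does not squeeze'' (alternative (2)) into the geometric picture that $\CH(C_n')$ is trapped between two \emph{fixed} half-spaces transverse to one \emph{fixed} geodesic. Once that is in place the bound is immediate, since a geodesic with both endpoints off a closed half-space meets it in a compact segment. The only auxiliary facts used are standard: the convex hull in $\HH^3$ of a closed round disk $\overline D\subsetneq\CP^1$ is the half-space bounded by the geodesic plane spanning $\partial D$ (used to rule out collapse and to produce the two half-spaces), and the complement $\HH^3\setminus\CH(C)$ of the convex hull of a Jordan curve consists of two regions whose ideal boundaries are the two complementary Jordan domains of $C$.
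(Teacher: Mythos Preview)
Your proof is correct and follows essentially the same approach as the paper: argue by contradiction, normalise by isometries $g_n$ sending $x_n$ to a fixed basepoint, rule out alternative~(1) because the basepoint lies in $\CH(C_n')$, and then use alternative~(2) to produce a fixed geometric object yielding a uniform bound on both distances. The only cosmetic difference is in the last step: the paper picks planes $P^\pm$ with ideal boundary in $U_\pm$ and observes that $\partial^\pm\CH(C_n')$ separates the basepoint from $P^\pm$, giving $d(o,\partial^\pm\CH(C_n'))\le d(o,P^\pm)$, whereas you run a single geodesic $\gamma$ between the centres of $U_+$ and $U_-$ and trap its intersection with $\CH(C_n')$ in a fixed compact segment. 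Both implementations exploit the same containment $\CH(C_n')\subset\CH(\CP^1\setminus U_+)\cap\CH(\CP^1\setminus U_-)$ and are equivalent in spirit.
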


 \begin{proof}
Suppose, by contradiction, that there exists a sequence of points $x_n \in \CH(C_n)$ such that $d(x_n, \partial^+ \CH(C_n)) +  d(x_n, \partial^- \CH(C_n))\to +\infty$. Let $g_n$ be any isometry sending $x_n$ to some fixed point $\bar x$ in $\HH^3$. First, we notice that no subsequence of $(g_n)_{n\in \N}$ can collapse the whole sequence of curves $(C_n)_{n\in \N}$ to a point, as this would contradict the fact that $x_n\in CH(C_n)$.  Second, since $(g_n)_{n\in \N}$ does not ``squeeze'' the complementary regions, let $U_+$ and $U_-$ be open sets as above and consider planes $P^+$ in $\HH^3$ with boundary in $U_+$ and $P^-$ in $\HH^3$ with boundary in $U_-$. Notice that $g_n(\partial^+ \CH(C_n))$ disconnects $P^+$ from $g_n(\partial^- \CH(C_n))$, and $g_n(\partial^- \CH(C_n))$ disconnects $P^-$ from $g_n(\partial^+ \CH(C_n))$, so that $d(x_n, \partial^+ \CH(C_n)) = d(\bar x, g_n(\partial^+ \CH(C_n))) < d(p, P^+)$ and $d(x_n, \partial^- \CH(C_n)) = d(\bar x, g_n(\partial^- \CH(C_n))) < d(\bar x, P^-)$. On the other hand, $d(\bar x, P^\pm)$ does not depend on $n$, so we have a uniform bound on $d(x_n, \partial^+ \CH(C_n)) +  d(x_n, \partial^- \CH(C_n))$ which contradicts the assumption.
\end{proof}

\subsubsection{Proof of Theorem \ref{example1}}

  The idea of this construction is to consider Jordan curves $C_n \subset \C$ which are piecewise unions of arcs of circles such that any two of these circles either meet forming a positive uniform angle or are uniformly disjoint, where by ``uniformly disjoint'' we mean that the modulus of the annulus bounded by them is uniformly bounded from both $0$ and $\infty$ (see  \cite[Section  I.6]{lehto-virtanen} for the definition and properties of  the modulus of a ring). We can then use the fact that the limit of the images of a circle through any family of isometries $g_n$ can be either a point or a circle and the fact that the ``transversality condition'' above prevents different circles from having the same limit. This will allow us to use Proposition \ref{charac_bddwid} and prove that their width is uniformly bounded. We will also see what are the ``necks'' of $C_n$ to consider in order to apply Proposition \ref{alf}.

\begin{figure}
[hbt] \centering
\includegraphics[height= 5 cm]{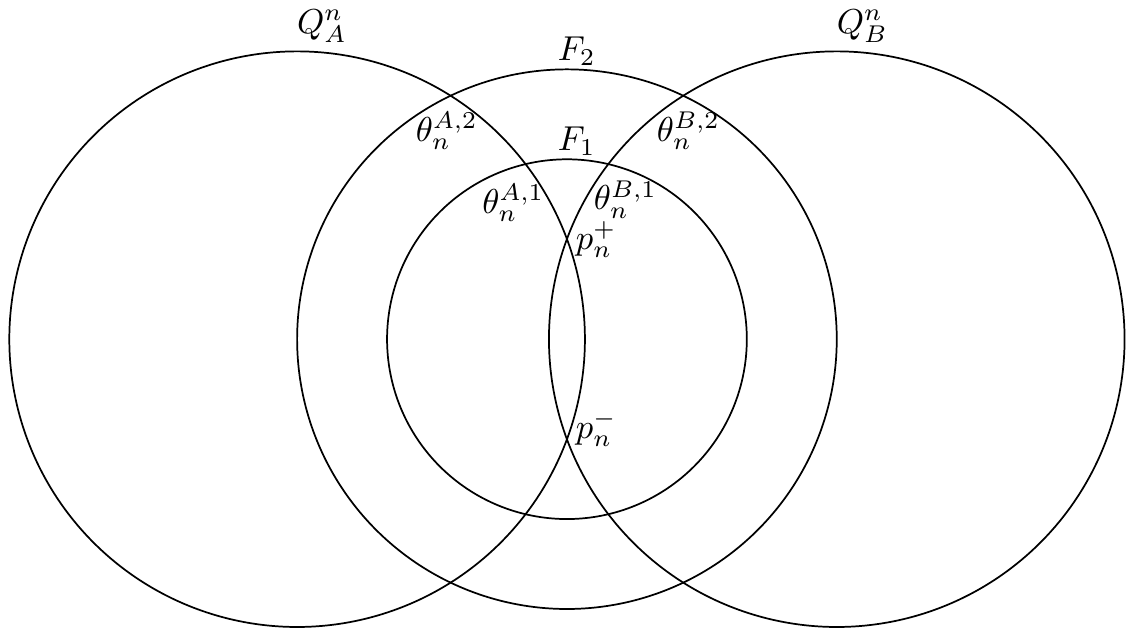}
\includegraphics[height= 5 cm]{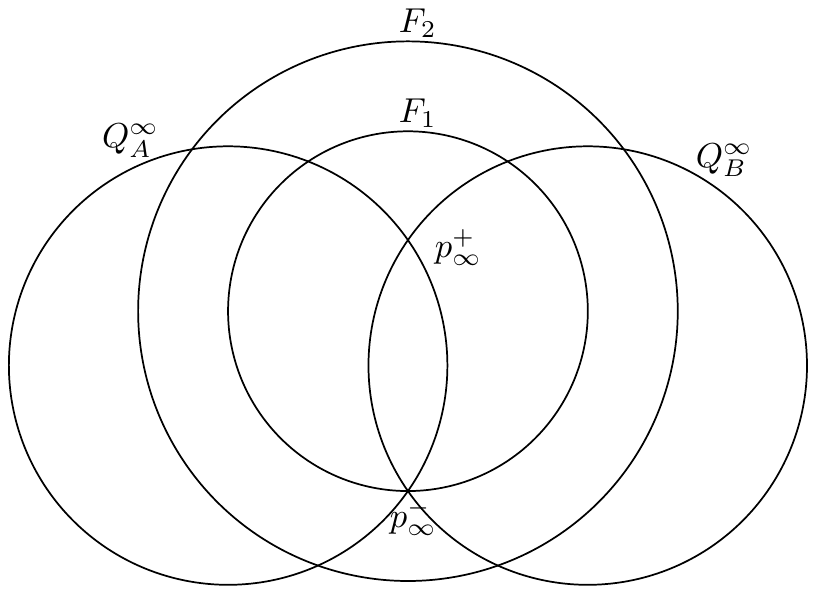}
\caption{The circles $F_1$ and $F_2$ together with the circles $Q_A^{n}$ and $Q_B^{n}$ (in the picture above) and together with the circles $Q_A^{\infty}$ and $Q_B^{\infty}$ (in the picture below).}
\label{fig:circles}
\end{figure}
   
 Fix two concentric circles $F_1$ and $F_2$  which bound disks $D_1$ and $D_2$ in the plane $\C = \C \mathbb{P}^1\setminus \{\infty\}$, so that $F_1 \subset \mathrm{Int}(D_2)$. Construct a sequence of pairs of circles $Q_A^n$ and $Q_B^n$ such that
\begin{itemize}
  \item $Q_A^n$ and $Q_B^n$ meet at points $p_-^n, p_+^n\in \mathrm{Int}(D_1)$ and form at these points an angle $\theta_n > 2\epsilon$ for some fixed $\epsilon>0$.
  \item $Q_A^n$ and $Q_B^n$ meet both $F_1$ and $F_2$ with some angle $\sphericalangle (Q_i^n, F_j) = \theta_n^{i,j} > 2\epsilon$ for $i\in \{ A,B\}, j\in \{1, 2\}$.
  \item $Q_A^n \to Q_A^{\infty}$ and $Q_B^n \to Q_B^{\infty}$, so that $p_-^n \to p_-^{\infty} \in Q_A^{\infty} \cap Q_B^{\infty} \cap F_1$.
\end{itemize}
See Figure \ref{fig:circles}.

\begin{figure}
[hbt] \centering
\includegraphics[height= 5 cm]{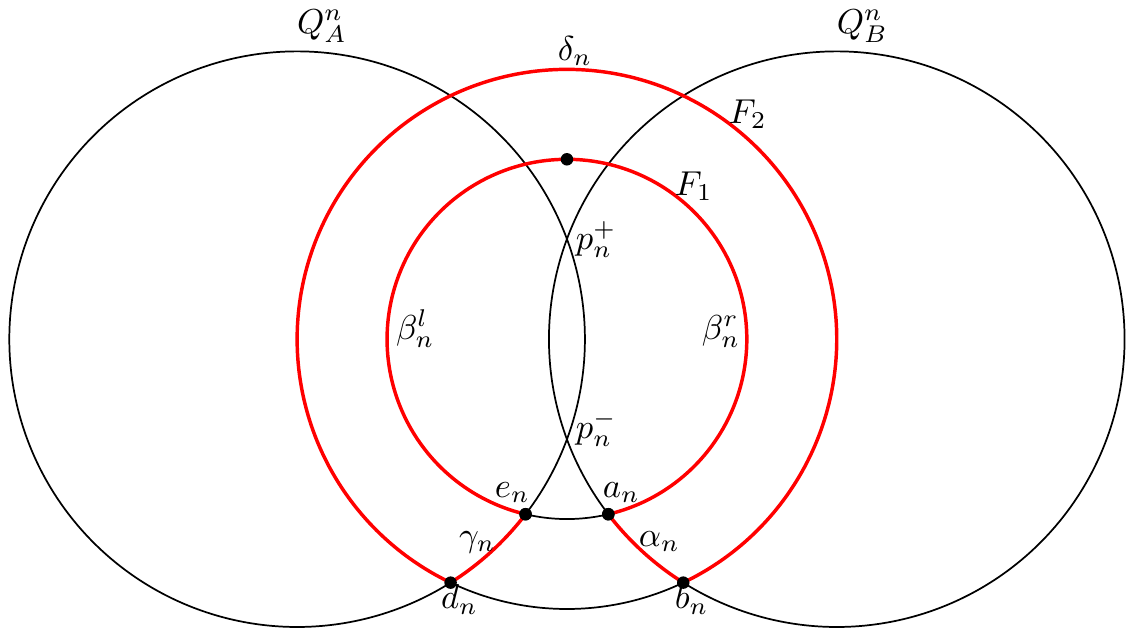}
\includegraphics[height= 5 cm]{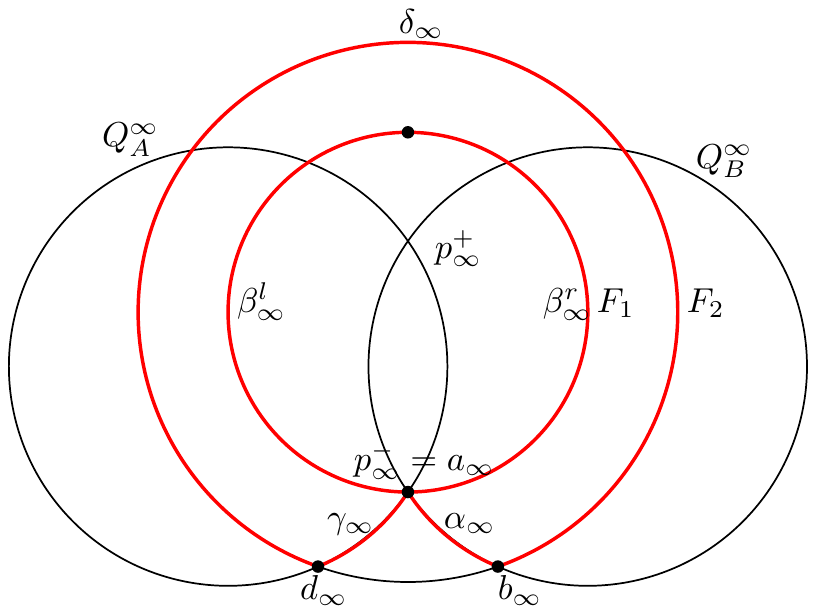}
\caption{The curve $C_n$ (above) and the curve $C_\infty$ (below) in red}
\label{fig:cn}
\end{figure}

For each $n$ consider the curve $C_n$ described in Figure \ref{fig:cn} and contained in the union $F_1 \cup F_2 \cup Q_A^n \cup Q_B^n$. Let $a_n$ be the vertex of $C_n$ at the intersection between $F_1$ and $Q_B^n$ and denote by $b_n,d_n,e_n$ the other vertices of $C_n$ ordered anti-clockwise. Let also $\alpha_n,\beta_n,\gamma_n, \delta_n$ be the circle arcs in $C_n$ named so that $\alpha_n$ joins $a_n$ to $b_n$, $\beta_n$ joins $b_n$ to $d_n$, $\gamma_n$ joins $d_n$ to $e_n$, and $\delta_n$ joins $e_n$ and $a_n$. We actually split the arc $\beta_n$ in two halves, $\beta_n^l$ and $\beta_n^r$, making sure that the limit arcs $\beta_\infty^l$ and $\beta_\infty^r$ are not degenerate. See Figure \ref{fig:cn}. 

Notice that $(a_n)_{n\in \N}$ and $(e_n)_{n\in \N}$ converges to the same vertex $a_\infty$ of $C_\infty$, while $(b_n)_{n\in \N}$ and $(d_n)_{n\in \N}$ converge respectively to vertices $b_\infty$ and $d_\infty$. The curve $C_\infty=\lim C_n$ is not a Jordan curve, so, by Lemma \ref{lm:compactness}, $C_n$ is not a sequence of uniform quasicircles, that is, there is no uniform $K> 0$ such that the $C_n$ are $K$--quasicircles. In fact, each $C_n$ is a $K_n$--quasicircle, but $\lim K_n=\infty$. (This can be seen directly by using Ahlfors' criterion and considering the necks defined by $(a_n)$ and $(e_n)$ and the diameter of $(\delta_n)$.)

To prove that the $C_n$ have uniformly bounded width, we will use the following simple definition and claim. 

\begin{figure}
[hbt] \centering
\includegraphics[height= 3.5 cm]{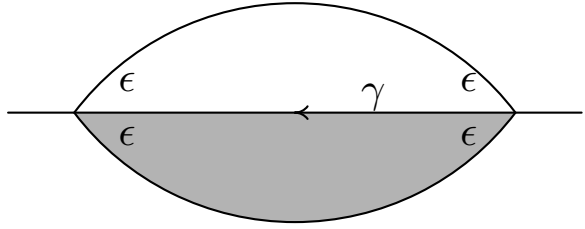}
\caption{The left (grey) and right (white) $\epsilon$--bigon of $\gamma$}
\label{bigon}
\end{figure}

\begin{defi} \label{df:theta}
  Let $\gamma\subset \CP^1$ be an oriented arc of circle, and let $\theta\in (0,\pi/2)$. The left (resp. right) $\theta$--bigon of $\gamma$ is the open bigon with angle $\theta$ on the left (resp. right) of $\gamma$. [Here by ``bigon'' we mean a domain of $\CP^1$ bounded by two arcs of circles. Those two arcs meet at the ``vertices'' of the bigon, and the interior angle at each vertex is the same.] See Figure \ref{bigon}.
\end{defi}

From Figure \ref{fig:cn} you can see the following claim.

\begin{Claim} \label{cl:theta}
  There exists $\epsilon>0$ (from the definition of $C_n$) such that for all $n\in \N$ and all oriented segments (of arcs of circle) $\gamma$ of $C_n$, the left and right $\epsilon$--bigons of $\gamma$ are disjoint from $C_n$ and contained in distinct regions of $\mathbb C\mathbb P^1\setminus C_n$.
\end{Claim}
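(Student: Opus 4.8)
The plan is to verify Claim~\ref{cl:theta} by a local analysis near each vertex of $C_n$ together with a global argument for points in the interior of each arc. The curve $C_n$ is a union of four circular arcs $\alpha_n, \beta_n, \gamma_n, \delta_n$ (with $\beta_n$ split into $\beta_n^l, \beta_n^r$), each lying on one of the circles $F_1, F_2, Q_A^n, Q_B^n$. For an oriented subarc $\gamma$ contained in one of these circles, its left and right $\epsilon$-bigons are contained in the two disks bounded by that circle (one on each side), shrunk by the angle condition. So the content of the claim is: (i) the $\epsilon$-bigons do not cross $C_n$, and (ii) they land on opposite sides of $C_n$.

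First I would handle the case where $\gamma$ is a short arc near one of the vertices $a_n, b_n, d_n, e_n$. At each such vertex two of the four circles meet, and by construction they meet at an angle $>2\epsilon$ (this is exactly the transversality built into the definition of $C_n$: the angles $\theta_n$, $\theta_n^{i,j}$ are all $>2\epsilon$, and $\beta_n$ was split so that the relevant limit arcs are nondegenerate). Choosing $\epsilon$ smaller than half of the infimum of all these angles — which is positive and uniform in $n$ since all angles converge to the corresponding angles of $C_\infty$, which are themselves positive — guarantees that near the vertex the $\epsilon$-bigon of $\gamma$ opens into the ``wedge'' strictly between the two incident arcs and hence meets neither. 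That the two bigons lie in different complementary components follows because near a vertex $C_n$ locally separates a neighborhood into two pieces (the curve is locally an arc, since at a vertex it is still a topological arc), and the left and right bigons are on the two sides of this local arc.

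Next I would treat $\gamma$ whose interior meets no vertex, i.e. $\gamma$ is a subarc of the interior of one of $\alpha_n, \beta_n^l, \beta_n^r, \gamma_n, \delta_n$. Here the supporting circle of $\gamma$ is one of $F_1, F_2, Q_A^n, Q_B^n$, and the relevant point is that this whole circle meets each of the other three circles either in a uniform angle $>2\epsilon$ or is ``uniformly disjoint'' from it (bounded modulus), so for $\epsilon$ small the $\epsilon$-bigon — which is a thin lens hugging the arc $\gamma$ on its supporting circle — does not reach any of the other arcs of $C_n$; one uses that $C_n$ away from $\gamma$ stays a definite distance away, uniformly in $n$, by the convergence $C_n \to C_\infty$ and compactness. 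Opposite-sidedness is clear here: the two bigons lie in the two disks bounded by the supporting circle, and these two disks meet $C_n$'s complement in different components because $\gamma$ is an arc of $C_n$. I would then combine the two cases and take $\epsilon$ to be the minimum of the finitely many constants produced, noting all bounds are uniform in $n$ by passing to the limit curve $C_\infty$.

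The main obstacle is making the uniformity in $n$ rigorous: one must argue that the ``wedge'' angles at vertices and the separation distances between non-adjacent arcs do not degenerate as $n \to \infty$. This is where the careful setup pays off — the angles are bounded below by $2\epsilon$ by hypothesis, and the splitting of $\beta_n$ was arranged precisely so that no arc degenerates in the limit; so the only real work is to check that $C_\infty$, being a genuine (non-Jordan) curve built from nondegenerate arcs of $F_1, F_2, Q_A^\infty, Q_B^\infty$ meeting transversally, has the $\epsilon$-bigon property, and then transfer it to $C_n$ for large $n$ by continuity, handling the finitely many small $n$ separately (each is an honest quasicircle with transverse arcs, so the property holds for it with its own $\epsilon$). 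A minor point to be careful about is that near the vertex $a_\infty$, where $a_n$ and $e_n$ collide, one should check the bigons of arcs of $C_n$ near $a_n$ and near $e_n$ separately and observe that although these necks pinch, each individual bigon still behaves well — it is the pinching that destroys the quasicircle constant, not the bigon property.
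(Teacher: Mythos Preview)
The paper gives no proof of this claim at all: it simply writes ``From Figure~\ref{fig:cn} you can see the following claim'' before stating it, and then notes afterward that the splitting $\beta_n=\beta_n^l\cup\beta_n^r$ is required for it to hold. So your proposal is not so much a different route as it is an actual argument where the paper relies on a picture. Your outline is sound and identifies precisely the ingredients the construction was designed to supply: the uniform lower bound $2\epsilon$ on all intersection angles handles the behaviour of an $\epsilon$-bigon near each vertex, the uniform-modulus separation of the non-intersecting circles handles it in the interior of each arc, and the convergence $C_n\to C_\infty$ together with the nondegeneracy of all limit arcs (the reason $\beta_n$ was split) gives the uniformity in~$n$. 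One small clarification: in the claim, $\gamma$ ranges only over the five full segments $\alpha_n,\beta_n^l,\beta_n^r,\gamma_n,\delta_n$, not over arbitrary subarcs, so your two ``cases'' (near a vertex versus interior) are really describing the behaviour at the two ends versus the middle of each fixed segment. Your closing remark about the collision $a_n,e_n\to a_\infty$ is exactly the point the paper is implicitly leaning on---the $\epsilon$-bigon property survives the pinching even though the quasicircle constant does not.
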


Note that this claim only holds with the arc $\beta_n$ split as $\beta_n^l$ and $\beta_n^r$, as defined above.

Now, we claim that $w(C_n) \leq M$ for some $M$ independent of $n$. We will prove this by applying Proposition~\ref{charac_bddwid}. The following Claim \ref{claim} checks the hypotheses of Proposition~\ref{charac_bddwid}.

\begin{Claim}\label{claim}
  Let $(C_n)_{n\in \N}$ be the sequence of Jordan curves described above and let $\Omega_n^+$ and $\Omega_n^-$ be the ``external'' and ``internal'' complementary regions of $C_n$. Then for any sequence $(g_n)_{n\in \N}$ in $\PSL(2,\C)$, there exists a subsequence $(g_{n_k})_{k\in \N}$ such that either Condition $(1)$ or Condition $(2)$ from Proposition \ref{charac_bddwid} holds. 
\end{Claim}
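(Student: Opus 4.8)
The plan is to analyze the possible limiting behaviors of the circles $F_1, F_2, Q_A^n, Q_B^n$ under a sequence $(g_n)$ in $\PSL(2,\C)$, and to package this using the transversality encoded in Claim \ref{cl:theta}. Recall the basic fact used in the sketch preceding the construction: for a sequence of round circles $S_n$ in $\CP^1$ and any sequence $g_n \in \PSL(2,\C)$, after passing to a subsequence $g_n(S_n)$ converges either to a round circle or to a point; moreover this dichotomy is governed by whether the two complementary disks of $g_n(S_n)$ both retain a definite ``size''. First I would pass to a subsequence so that each of the five families $g_n(F_1), g_n(F_2), g_n(Q_A^n), g_n(Q_B^n)$ and $g_n(C_n)$ converges (the last in Hausdorff topology, possible since $\CP^1$ is compact). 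Since each arc of $C_n$ lies on one of the four circles, $\lim g_n(C_n)$ is contained in the union of the limits of those four circles.

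The key dichotomy is whether $g_n(F_1)$ (equivalently, by the uniform transversality $\sphericalangle(Q_i^n, F_j) > 2\epsilon$ and $\theta_n > 2\epsilon$, any one of the four circles) degenerates to a point. \emph{Case 1: $g_n(F_1)$ converges to a point $p$.} I claim all four circles then converge to $p$, so $g_n(C_n) \to p$ and Condition (1) holds. The point is that if two circles $S_n, S'_n$ meet at a uniform angle $\geq 2\epsilon$ at a point, and $S_n$ collapses to $p$, then $S'_n$ must also collapse to $p$: otherwise $S'_n \to S'_\infty$ a genuine circle, and for large $n$ the circle $S_n$ would lie in a small neighborhood of $p$, forcing the intersection points and hence the angle to be controlled by how $S'_n$ passes near $p$ --- but a circle crossing a shrinking circle at definite angle must itself shrink near the crossing locus; made precise via the Möbius-invariant cross-ratio / inversive distance, uniform transversality is incompatible with exactly one of the pair degenerating. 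Chaining this through $F_1 \leftrightarrow Q_A^n \leftrightarrow F_2$ and $F_1 \leftrightarrow Q_B^n$ gives the claim.

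\emph{Case 2: $g_n(F_1)$ converges to a genuine circle.} Then by the same transversality argument all four circles converge to genuine circles $F_1^\infty, F_2^\infty, (Q_A)^\infty, (Q_B)^\infty$, and the uniform angle bounds guarantee these four limit circles are pairwise transverse (in particular pairwise distinct). Now pick any oriented segment $\gamma_0$ of $C_n$ lying on, say, $F_1$ --- more carefully, fix a small subarc $\gamma_0^n \subset \alpha_n$ (or any arc of $C_n$) chosen to converge to a nondegenerate subarc of the appropriate limit circle; this is exactly why $\beta_n$ was split into $\beta_n^l, \beta_n^r$ and why the limit arcs were arranged nondegenerate. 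By Claim \ref{cl:theta}, the left and right $\epsilon$-bigons $B_-^n, B_+^n$ of $\gamma_0^n$ are disjoint from $C_n$ and lie in the two distinct complementary regions $\Omega_n^\pm$. Since $g_n$ is conformal, $g_n(B_\pm^n)$ are again $\epsilon$-bigons of the arc $g_n(\gamma_0^n)$, which converges to a nondegenerate arc $\gamma_0^\infty$ of a genuine circle; hence $g_n(B_\pm^n)$ converge to the left/right $\epsilon$-bigons of $\gamma_0^\infty$, which are nonempty open sets. Choosing $U_\pm$ to be small fixed open balls inside the respective limit bigons, we get $U_\pm \subset g_n(B_\pm^n) \subset g_n(\Omega_n^\pm)$ for all large $n$, which is Condition (2).

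The main obstacle I expect is the rigorous version of the ``transversality is incompatible with one-sided degeneration'' lemma in Case 1 (and its consequence in Case 2 that the limit circles stay pairwise distinct): one must argue carefully, using a Möbius-invariant quantity such as the inversive distance or a cross-ratio of four points on the two circles, that a uniform lower bound on the crossing angle prevents exactly one of a transverse pair of circles from collapsing while the other survives, and likewise prevents two transverse circles from having the same (nondegenerate) limit. The secondary technical point is the bookkeeping needed to select, in Case 2, an arc $\gamma_0^n$ of $C_n$ whose image under $g_n$ is guaranteed to converge to a nondegenerate arc — this is where one uses that after the subsequence all four circles have nondegenerate limits, together with the fact that $C_n$ contains, on each of its four defining circles, an arc bounded away from the vertices (again the splitting of $\beta_n$ is what makes this uniform).
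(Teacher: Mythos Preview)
Your bigon endgame is right and matches the paper's, but the dichotomy you set up to reach it has a genuine gap: the lemma ``a uniform lower bound on the crossing angle prevents exactly one of a transverse pair of circles from collapsing while the other survives'' is false, and no M\"obius-invariant quantity will rescue it. Here is a counterexample. Let $S$ and $S'$ be circles in $\CP^1$ meeting at angle $\theta\in(0,\pi)$. After a single M\"obius map, normalize so that $S'=\R\cup\{\infty\}$ and one intersection point is $0$; then $S$ is a circle through $0$ meeting $\R$ at angle~$\theta$. Now take $g_n(z)=z/n$: we have $g_n(S')=\R\cup\{\infty\}$ for all $n$, while $g_n(S)$ is a circle through $0$ whose diameter tends to $0$, hence $g_n(S)$ collapses to the point $0$. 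The angle between $g_n(S)$ and $g_n(S')$ is $\theta$ for every $n$; the inversive distance is $\cos\theta$ for every $n$; any cross-ratio of four marked points is likewise constant. So in your Case~1 the collapse of $g_n(F_1)$ does not force the collapse of $g_n(Q_A^n)$ (hence not of $g_n(C_n)$), and symmetrically in your Case~2 the survival of $g_n(F_1)$ does not force the survival of the other three circles.

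The paper avoids this entirely by choosing a different, simpler dichotomy: it works directly with the arcs of $C_n$ rather than with the four underlying circles. After passing to a subsequence, each of the finitely many arcs $g_n(\alpha_n)$, $g_n(\beta_n)$, $g_n(\gamma_n)$, $g_n(\delta_n)$ converges either to a point or to a nondegenerate arc of a circle. If all of them collapse, then (since consecutive arcs share endpoints) $g_n(C_n)$ collapses to a single point and Condition~(1) holds. If even one of them, say $g_n(\alpha_n)$, converges to a nondegenerate arc $\alpha$, then its left and right $\epsilon$-bigons converge to the $\epsilon$-bigons of $\alpha$, and Claim~\ref{cl:theta} immediately gives Condition~(2). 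No relation between different circles is ever invoked; the angle hypothesis enters only through Claim~\ref{cl:theta}, exactly as in the second half of your Case~2. Replacing your circle-based dichotomy by this arc-based one removes the gap and is in fact the whole proof.
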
 

\begin{proof}
  Suppose first that (after taking a subsequence) $(g_n(\alpha_n))_{n\in \N}$, $(g_n(\beta_n))_{n\in \N}$, $(g_n(\gamma_n))_{n\in \N}$ and $(g_n(\delta_n))_{n\in \N}$ all converge to points. Then $(g_n(C_n))_{n\in \N}$ converges to a point.

  Otherwise, we can assume that, after taking a subsequence, one of the four sequences of segments, say $(g_n(\alpha_n))_{n\in \N}$, converges to an arc of circle $\alpha$ in $\CP^1$. Let $U_{n,l}$ and $U_{n,r}$ be the left and right $\epsilon$--bigons of $g_n(\alpha_n)$, with $\epsilon$ coming from the definition of $C_n$ and Claim \ref{cl:theta}. Then $U_{n,l}\to U_l$ and $U_{r,n}\to U_r$, where $U_l$ and $U_r$ are the left and right $\epsilon$--bigons of $\alpha$. We can then take $U_+, U_-$ to be $U_l, U_r$ respectively, and see that the second case in the Claim applies.
\end{proof}

\subsection{Proof of Theorem \ref{example2}} \label{proof_example2}

\begin{figure}
[hbt] \centering
\includegraphics[height= 4cm]{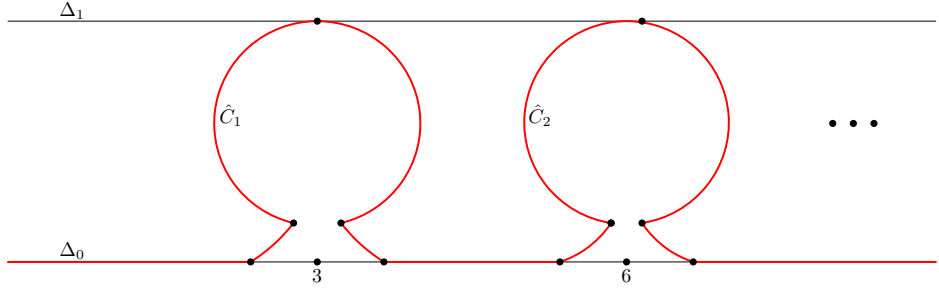}
\caption{The curve $D$ and the arcs $f_0, f_1, \cdots, f_7, \cdots$.}
\label{curve_D}
\end{figure}

In order to prove Theorem \ref{example2} we need to construct a Jordan curve $D$ with bounded width which is not a quasicircle. We will define $D$ as a curve containing the ``interesting'' part of all the quasicircles $C_n$ described in the previous section as follows.

We assume that the circle $F_2$ considered before has $\mathrm{diam}(F_2)=1$. We define a curve $D$ as follows. We start from the real axis $\Delta_0$ in $\C$, and for each $n\geq 1$ we remove a segment of $\Delta_0$ centered at $3n$ and glue instead a translated copy of $\hat{C}_n = C_n\setminus \delta_n$, scaled so that the highest point is on the line $\Delta_{1}$ of equation $\Im z =1$. We obtain in this manner a subset $D\subset \CP^1$, see Figure \ref{curve_D}. By construction, $D$ is a Jordan curve, but it is not a quasicircle. Indeed if $D$ were a $K$-quasicircle, then the  translates $D_n=D-3n$ would form a sequence of  $K$-quasicircles with uniform $K$. 
However their limit is not a Jordan curve, and this contradicts the compactness properties of $K$ quasicircles, Lemma \ref{lm:compactness}.

Denote by $f_0, f_1, \ldots, f_n, \ldots$ the arcs of circle composing $D$, with $f_0$ corresponding to the part of the real axis to the left of the first surgery, all oriented towards $+\infty$. Similarly to Claim \ref{cl:theta} and using the fact that the circles either meet forming a positive uniform angle or are uniformly disjoint, we note that for each $n\in \N$, the left and right $\epsilon$--bigons of $f_n$ are disjoint from $D$, for some $\epsilon>0$.
We will also consider the half-lines $\Delta_{0,+}$ and $\Delta_{1,+}$ composed of the points of $\Delta_0$ and $\Delta_1$, respectively, with positive real parts. It will be useful to note that the left $\epsilon$--bigon of $\Delta_{0,+}$ and the right $\epsilon$--bigon of $\Delta_{1,+}$ are disjoint from $D$, too, because all of $D$ is below $\Delta_0$ and above $\Delta_{1}$ by construction.

To complete the proof of Theorem A, we will prove that $D$ has finite width using Proposition \ref{charac_bddwid} and the following result. The proof is an extension of the idea in the proof of Claim \ref{claim}.

\begin{Proposition} \label{pr:D}
  Let $\Omega^+$ and $\Omega^-$ be the ``external'' and ``internal'' complementary regions of $D$. For any sequence $(g_n)_{n\in \N}$ in $\PSL(2,\C)$, there exists a subsequence $(g_{n_k})_{k\in \N}$ such that either Condition $(1)$ or Condition $(2)$ from Proposition \ref{charac_bddwid} holds.
\end{Proposition}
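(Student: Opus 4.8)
The plan is to extend the dichotomy from the proof of Claim~\ref{claim} to the infinitely many circular arcs of $D$, exploiting the extra feature that $D$ is trapped between two parallel lines. Fix a sequence $(g_n)_{n\in\N}$ in $\PSL(2,\C)$; exactly as in the proof of Claim~\ref{claim} it suffices to verify the hypotheses of Proposition~\ref{charac_bddwid} along a subsequence, so we may pass to subsequences freely. If $(g_n)$ is precompact, a subsequence converges to some $g\in\PSL(2,\C)$, whence $g_n(\Omega^\pm)\to g(\Omega^\pm)$ in the Hausdorff topology and any pair of small balls $U_\pm\subset g(\Omega^\pm)$ lies in $g_n(\Omega^\pm)$ for all large $n$: this is Condition~$(2)$. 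Otherwise $(g_n)$ leaves every compact subset of $\PSL(2,\C)$, and by the standard dynamics of $\PSL(2,\C)$ acting on $\CP^1$ there are, after passing to a subsequence, points $p,q\in\CP^1$ with $g_n\to q$ locally uniformly on $\CP^1\setminus\{p\}$. If $p\notin D$ then $D$ is a compact subset of $\CP^1\setminus\{p\}$, so $g_n(D)\to\{q\}$ and Condition~$(1)$ holds; so from now on assume $p\in D$.

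Next I would bring in the geometry of the construction. By construction $D$ lies in the closed ``strip'' $\Sigma$ bounded by the two parallel lines $\Delta_0=\R\cup\{\infty\}$ and $\Delta_1$ of the construction (two circles tangent at $\infty$). Write $W_0,W_1$ for the two round disks with $\overline{W_0}\cap\overline{W_1}=\Sigma$; the round disks $\HH:=\CP^1\setminus\overline{W_0}$ and $\HH^{--}:=\CP^1\setminus\overline{W_1}$ are disjoint from $D$, and since $D$ runs across the strip they lie in the two distinct complementary regions of $D$ --- say $\HH\subset\Omega^-$ and $\HH^{--}\subset\Omega^+$ (relabelling $\Omega^\pm$ if necessary). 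Pass to a further subsequence so that the round disks $g_n(W_0)$ and $g_n(W_1)$ converge in the Hausdorff topology, each either to a closed round disk, or to a point, or to all of $\CP^1$. If one of $g_n(W_0),g_n(W_1)$ converges to a point, then $g_n(D)\subset g_n(\overline{W_i})$ converges to a point and Condition~$(1)$ holds. If $g_n(W_0)$ converges to a proper closed disk $E_0$, then $g_n(\HH)=\CP^1\setminus\overline{g_n(W_0)}$ converges to the nonempty open set $\CP^1\setminus E_0$, so any small ball $U_-\subset\CP^1\setminus E_0$ lies in $g_n(\HH)\subset g_n(\Omega^-)$ for all large $n$; symmetrically $g_n(W_1)\to E_1$ proper yields $U_+\subset g_n(\HH^{--})\subset g_n(\Omega^+)$. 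Hence if both $g_n(W_0)$ and $g_n(W_1)$ converge to proper closed disks we obtain Condition~$(2)$.

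The remaining possibility is that one of $g_n(W_0),g_n(W_1)$ converges to all of $\CP^1$; say $g_n(W_0)\to\CP^1$, equivalently $g_n(\Delta_0)$ and $g_n(\HH)$ both shrink to a single point $P$. (If instead $g_n(W_1)\to\CP^1$, one argues symmetrically, noting that $g_n(\Delta_0)$ then converges to a nondegenerate circle around which $g_n(D)$ winds, so that both complementary regions of $g_n(D)$ remain ``fat''.) Assume $g_n(D)$ does not converge to a point, so $\limsup_n\mathrm{diam}\bigl(g_n(D)\bigr)>0$. Every arc of $D$ lying on the circle $\Delta_0$ --- the ray $f_0$ and the segments between consecutive bumps --- has image contained in the shrinking set $g_n(\Delta_0)$, hence these images have diameters tending to $0$ uniformly; since the bumps are separated along $D$ by such arcs, the ``macroscopic'' part of $g_n(D)$ must be carried by a single bump $\hat C_{k_n}$, whose two endpoints lie on $\Delta_0$ (so their images approach $P$), and therefore $\mathrm{diam}\bigl(g_n(\hat C_{k_n})\bigr)$ is bounded below. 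As $\hat C_{k_n}$ is a union of a uniformly bounded number of circular arcs, after a further subsequence one of its arcs $f_{j_n}$ has $g_n(f_{j_n})$ converging to a nondegenerate arc of circle $\phi$. At this point the bigon argument of Claim~\ref{claim} applies verbatim: as observed in the construction (and as in Claim~\ref{cl:theta}), the left and right $\epsilon$-bigons of $f_{j_n}$ are disjoint from $D$ and lie in the two distinct complementary regions of $D$; their $g_n$-images are the left and right $\epsilon$-bigons of $g_n(f_{j_n})$, disjoint from $g_n(D)$ and contained in the two distinct complementary regions of $g_n(D)$, and they converge to the nonempty left and right $\epsilon$-bigons of $\phi$. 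Taking small balls inside these limits produces $U_\pm$, which is Condition~$(2)$. The guard-rail arcs $\Delta_{0,+}$ and $\Delta_{1,+}$, whose one-sided $\epsilon$-bigons are disjoint from $D$ and land in opposite complementary regions, play the same role when the macroscopic behaviour is carried by the tail of $D$ near $\infty$ rather than by a single bump.

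The main obstacle is precisely this last case, which is exactly the one where $p=\infty$, the accumulation point of the bumps of $D$: unlike in the finite configuration of Claim~\ref{claim}, one cannot localize $g_n(D)$ to a single arc, and the crux is to exclude the scenario in which $g_n(D)$ stays macroscopic while every individual arc of $D$ (and every guard-rail arc) is mapped to a microscopic set. This is what the sandwiching of $D$ between the parallel lines $\Delta_0$ and $\Delta_1$ is for: because Möbius maps send circles to circles, $g_n(\Delta_0)$ and $g_n(\Delta_1)$ --- and hence $g_n(W_0),g_n(W_1),g_n(\HH),g_n(\HH^{--})$ --- have well-controlled subsequential limits, and carefully tracking which of the three regions $\HH$, $\Sigma$, $\HH^{--}$ collapse and which survive under $g_n$ forces one of the two conclusions of Proposition~\ref{charac_bddwid}.
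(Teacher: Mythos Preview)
Your route via north--south dynamics and the strip sandwich differs from the paper's and, as written, leaves a genuine gap. The paper organizes the proof around a single dichotomy on arc lengths: fixing a spherical metric $\rho$, either $\limsup_n \sup_m \ell_\rho(g_n(f_m)) > 0$, in which case one extracts $g_{n_k}(f_{m_k})$ converging to a nondegenerate circular arc and applies the $\epsilon$--bigon argument exactly as in Claim~\ref{claim}; or this double supremum is zero, meaning \emph{all} arcs shrink uniformly in $m$. In the second case, since infinitely many arcs of $D$ run between $\Delta_{0,+}$ and $\Delta_{1,+}$, the images $g_n(\Delta_{0,+})$ and $g_n(\Delta_{1,+})$ must have the \emph{same} Hausdorff limit $\lambda$; one then cases on whether $\lambda$ is a point (Condition~(1)), a proper arc (use the one--sided $\epsilon$--bigons of $\Delta_{0,+}$ and of $\Delta_{1,+}$, which lie in opposite complementary regions, to get Condition~(2)), or a full circle (Condition~(2) again). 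No tracking of a repelling point is needed.

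The step in your third paragraph that fails is the inference ``the bumps are separated along $D$ by arcs whose images shrink, hence the macroscopic part of $g_n(D)$ is carried by a single bump.'' A Jordan curve can perfectly well have many large petals all based near one point and separated by tiny arcs, so the premise does not give the conclusion. When $p$ lies in the open strip, and hence in the interior of a unique bump $\hat C_{k_0}$, the conclusion is true for a different reason: every other bump is compact in $\CP^1\setminus\{p\}$ and shrinks to $q$. But this does not cover $p\in\Delta_0$, and in particular not $p=\infty$, the case you correctly flag as the main obstacle. There no bump contains $p$, and nothing in your strip analysis prevents, for each $n$, many bumps $\hat C_k$ with $k$ large from having macroscopic $g_n$--image simultaneously; the region $g_n(\Sigma)$ can be the complement of two small tangent disks, leaving ample room. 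Your final paragraph asserts that ``carefully tracking which regions collapse'' resolves this, but that tracking is precisely what the paper's second case accomplishes via the common limit of $g_n(\Delta_{0,+})$ and $g_n(\Delta_{1,+})$, and it is not carried out in your sketch.
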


\begin{proof}
  We will use an auxiliary spherical metric $\rho$ on $\CP^1$, and consider two cases.

  First, if $\limsup_{n\to\infty}\sup_{m\in \N} \ell_\rho\left(g_n(f_m)\right)>0$, where $\ell_\rho$ is the length with respect to $\rho$, then there are strictly increasing sequences $(n_k)_{k\in \N}$ and $(m_k)_{k\in \N}$ such that $\left(g_{n_k} (f_{m_k})\right)_{k\in \N}$ converges in the Hausdorff topology to an arc of circle $a$. Then, as in the proof of Claim \ref{claim} above, any closed subsets of the left and right $\epsilon$--bigons of $a$ show that (2) in Proposition \ref{pr:D} holds.

  Second, suppose that $\limsup_{n\to\infty}\sup_{m\in \N} \ell_\rho(g_n(f_m))=0$. Then all the $f_m$ collapse to points. Since there are infinitely many segments of arcs of $D$ connecting $\Delta_{0,+}$ to $\Delta_{1,+}$, this implies that, after extracting a subsequence, $(g_n\Delta_{0,+})_{n\in \N}$ and $(g_n\Delta_{1,+})_{n\in \N}$ have the same limit $\lambda$, which can be either a point, an arc of circle, or a full circle. If $\lambda$ is a point, clearly all of $g_nD$ converges to $\lambda$, and (1) holds. If $\lambda$ is an arc of circle, then, since the left $\theta$-bigon of $\Delta_{0,+}$ and the right $\theta$-bigon of $\Delta_{1,+}$ are disjoint from $D$, case (2) of Proposition \ref{pr:D} holds. If $\lambda$ is a full circle, then $g_nD$ converges to $\lambda$ (again after extraction of a subsequence) and (2) holds again.
\end{proof}

\section{Jordan curves with small width (Proof of Theorem~\ref{tm:small})} \label{small}

We now consider the boundary width $w_\partial C$ of a Jordan curve, see Definition \ref{def:pwidth}. We have already noted that $w_\partial(C)\leq w(C)$. In this section, we prove the following strengthening of Theorem~\ref{tm:small}:

\begin{Theorem}\label{tm:B-improved}
Let $w_0 = \mathrm{cosh}^{-1}(\sqrt{2})$. There is a function $k:(0,w_0)\to(0,+\infty)$ such that if $C$ is a Jordan curve in $\CP^1$ with $w_\partial(C) \leq w < w_0$, then $C$ is a $k$--quasicircle where $k = k(w)$. 
\end{Theorem}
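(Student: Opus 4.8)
The plan is to deduce Theorem~\ref{tm:B-improved} from the characterization in Theorem~\ref{tm:proj}, read in an effective form: I will show that if $w_\partial(C)\le w<w_0$ then the nearest point projection $\pi_+\co\partial^+\CH(C)\to\partial^-\CH(C)$ is a $(\lambda,\epsilon)$--quasi-isometry with $\lambda=\lambda(w)$ and $\epsilon=\epsilon(w)$ depending only on $w$, and then check that the argument proving Theorem~\ref{tm:proj} in fact produces a quasiconformal constant $k$ depending only on $(\lambda,\epsilon)$. Both $\partial^\pm\CH(C)$ are intrinsically isometric to $\HH^2$, so all of the content sits in the metric distortion of $\pi_+$.

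A first, essentially free observation: for $x,x'\in\partial^+\CH(C)$, writing $y=\pi_+(x)$, $y'=\pi_+(x')$ and using $d(x,y)=d(x,\partial^-\CH(C))\le w$, $d(x',y')\le w$ together with the triangle inequality in $\HH^3$,
\[
\bigl|\,d_{\HH^3}(y,y')-d_{\HH^3}(x,x')\,\bigr|\le 2w~;
\]
the symmetric argument (exchanging $\partial^+$ and $\partial^-$) shows that $\pi_+$ is $2w$--coarsely surjective, that $\pi_-\co\partial^-\CH(C)\to\partial^+\CH(C)$ has the same properties, and that $d_{\HH^3}(\pi_-\pi_+(x),x)\le 2w$. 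Hence $\pi_+$ is already a $(1,2w)$--quasi-isometry for the distances that $\partial^\pm\CH(C)$ inherit from $\HH^3$ as subsets. Since the path metric of a pleated disk always dominates the ambient metric, the only missing ingredient is the reverse comparison: on each of $\partial^\pm\CH(C)$ the intrinsic path metric must be bounded above by an affine function of the ambient metric, with constants depending only on $w$. This is the crux, and it is exactly here that one must use $w<w_0$ and not merely $w<\infty$ --- by Theorem~\ref{example2}, bounded boundary width alone cannot suffice.

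To get this comparison I would argue locally, using convexity of $\CH(C)$ and nearest point geometry. Given $p,q\in\partial^+\CH(C)$ with $d_{\HH^3}(p,q)$ small, the ambient geodesic $[p,q]$ lies in $\CH(C)$; I would cover it by boundedly many short pieces, push each piece onto $\partial^+\CH(C)$ along support planes (using that at a smooth point $y\in\partial^-\CH(C)$ the segment $[x,\pi_+(x)]$ meets the support plane of $\partial^-$ orthogonally, and the symmetric statement for $\partial^+$), and bound the stretching of such a push by hyperbolic trigonometry for a configuration of two support planes at distance $\le w$. The elementary quantity controlling this stretching --- morally, how much the normal flow off one boundary component, run for time at most $w$, can be focused by the bending of the other --- should stay bounded precisely while $w<w_0$, and I expect the value $\cosh^{-1}\sqrt{2}$ to appear as the first value at which it degenerates, pinned down by an extremal configuration (a single bending line carrying a large angle, with the opposite boundary component at distance exactly $w$). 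The genuinely delicate point is dealing with the pleating loci, where the surfaces are non-smooth and the "normal direction" is a fan, and with the attendant possibility of bending laminations having large transverse mass on small sets; the width bound $w<w_0$ is what keeps that mass from accumulating pathologically.

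Granting this comparison, the remainder is bookkeeping: combining it with the $(1,2w)$ ambient estimate above yields explicit $\lambda(w),\epsilon(w)$ making $\pi_+\co(\partial^+\CH(C),d_{\partial^+})\to(\partial^-\CH(C),d_{\partial^-})$ a quasi-isometry, and the quantitative form of Theorem~\ref{tm:proj} then gives that $C$ is a $k(w)$--quasicircle. The main obstacle is thus the local trigonometric estimate and the emergence of the sharp threshold $w_0$; a secondary issue needing attention is extracting an effective quasiconformal bound, rather than a merely qualitative one, from the proof of Theorem~\ref{tm:proj}.
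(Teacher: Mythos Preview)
Your overall architecture matches the paper's exactly: show that $w_\partial(C)\le w<w_0$ forces $\pi_+$ to be a quasi-isometry for the \emph{intrinsic} metrics on $\partial^\pm\CH(C)$, with constants depending only on $w$, and then read off a quantitative quasicircle bound from the proof of Theorem~\ref{tm:proj}. Your $(1,2w)$ ambient estimate and coarse-inverse observation are correct and are also used (implicitly) by the paper.

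The divergence, and the gap, is at the step you yourself flag as the crux. You propose to prove, for each boundary surface separately, that the intrinsic metric is $(L,A)$--comparable to the ambient $\HH^3$ metric with $L,A$ depending only on $w$, and then combine with the $(1,2w)$ ambient estimate. The paper does \emph{not} do this, and in fact only ever establishes quasi-isometric embedding of $\partial^\pm\CH(C)$ (Lemma~\ref{lem:keyy}) under the hypothesis that $C$ is already a quasicircle, via the Tukia--V\"ais\"al\"a bilipschitz extension. Your sketch for the width-only version is where the argument breaks down: pushing an ambient segment $[p,q]\subset\CH(C)$ onto $\partial^+\CH(C)$ ``along support planes'' is not a well-defined operation from the concave side, and the ``normal flow'' you describe controls distances \emph{between} the two boundary components, not the bending of a single one. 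In particular, nothing in your outline explains why two nearby points of $\partial^+\CH(C)$ must have support planes that intersect, which is exactly the geometric fact that fails once $w\ge w_0$.

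The paper bypasses the separate quasi-isometric embedding entirely. Its key step (Lemma~\ref{lm:PP'Q}, Lemma~\ref{lem:roof}, Corollary~\ref{lm:b}) works directly with the map $\pi_+$: for $y,y'\in\partial^-\CH(C)$ with $d_-(y,y')$ small, take support planes $P,P'$ of $\partial^+\CH(C)$ at $x=\pi_+(y)$, $x'=\pi_+(y')$ and a support plane $Q$ of $\partial^-\CH(C)$ at $y$. Since $P,P',Q$ bound pairwise disjoint half-spaces and $d(x,y),d(x',y)\le w'<w_0$, an explicit trigonometric calculation (reducing to an ideal triangle in a $2$--plane) forces $P\cap P'\neq\emptyset$ and produces $z\in P\cap P'$ with $d(x,z)+d(z,x')\le b(w)$. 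The broken path $x\!\to\!z\!\to\!x'$ lies in $P\cup P'$, hence on the concave side of $\partial^+\CH(C)$, so nearest-point projection onto $\partial^+\CH(C)$ is $1$--Lipschitz and yields $d_+(x,x')\le b(w)$. This is where $w_0=\cosh^{-1}\sqrt 2$ genuinely appears. Subdividing an intrinsic geodesic on $\partial^-\CH(C)$ then gives the global quasi-isometry of $\pi_+$ for intrinsic metrics with no appeal to ambient-versus-intrinsic comparison on either surface. You had the right instinct that a three-plane configuration and hyperbolic trigonometry are involved; the missing idea is that the third plane is a support plane of the \emph{opposite} boundary at the preimage point, and that the resulting path, being outside $\CH(C)$, can be contracted back onto $\partial^+\CH(C)$.
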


Theorem~\ref{tm:B-improved}, and hence Theorem~\ref{tm:small}, follows from Lemma~\ref{lm:pi-}, which we prove in Section~\ref{sec:3.1}, and from Theorem~\ref{tm:proj}, which we prove in Section~\ref{proofC}.

\subsection{Consequences of small width}\label{sec:3.1}

\begin{lemma} \label{lm:PP'Q}
  Let $P,P', Q$ be three planes in $\HH^3$ bounding disjoint closed half-spaces. Let $x\in P, x'\in P', y\in Q$. Then $d(x,y)\geq w_0$ or $d(x',y)\geq w_0$. 
\end{lemma}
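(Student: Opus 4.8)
The plan is to reduce the three-plane configuration to an extremal one using a symmetrization/monotonicity argument, and then compute the minimal distance in the extremal case explicitly in terms of $w_0 = \cosh^{-1}(\sqrt 2)$. The key geometric fact to exploit is that $P, P', Q$ bound \emph{disjoint} closed half-spaces; call them $H_P, H_{P'}, H_Q$. Since $Q$ is disjoint from both $P$ and $P'$, the common perpendicular geodesics realizing $d(P,Q)$ and $d(P',Q)$ meet $Q$ at (possibly distinct) points, and the whole picture is ``between'' $P \cup P'$ on one side and $Q$ on the other. First I would observe that $d(x,y) \ge d(P,Q)$ and $d(x',y) \ge d(P',Q)$ are too weak on their own (the points $x, x', y$ are arbitrary on their planes), so the real content is: for any point $y \in Q$, one cannot have \emph{both} $y$ close to $P$ and $y$ close to $P'$. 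Equivalently, writing $f(y) = d(y, P)$ and $g(y) = d(y, P')$ as functions on $Q$, I must show $\min_{y \in Q} \max(f(y), g(y)) \ge w_0$; but since $x \in P$ is arbitrary we actually need the statement for the \emph{distance to the planes}, so it suffices to show $f(y) \ge w_0$ or $g(y) \ge w_0$ for every $y \in Q$ (and then $d(x,y) \ge d(P, y) \ge f(y)$, wait — one must be careful: $d(x,y) \ge d(y,P) = f(y)$ holds since $x \in P$, good).

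So the statement reduces to a two-plane-plus-point claim: if $H_P$ and $H_{P'}$ are disjoint closed half-spaces and $y$ is any point disjoint from both (here lying on $Q$, but that role of $Q$ is now irrelevant), then $d(y,P) \ge w_0$ or $d(y,P') \ge w_0$. Now the extremal configuration: among all pairs of disjoint half-spaces $H_P, H_{P'}$ and all points $y \notin H_P \cup H_{P'}$, the quantity $\max(d(y,P), d(y,P'))$ is minimized when $P$ and $P'$ are tangent at infinity (or as close as allowed), $y$ lies on the geodesic of symmetry, and $P, P'$ are symmetric with respect to $y$. In the limiting symmetric case the two half-spaces share a single ideal boundary point and $y$ sits ``in the middle''. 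Concretely, work in the upper half-space model: take $P$ and $P'$ to be two vertical half-planes (or hemispheres) symmetric across a vertical plane $V$, with $y \in V$. Then $d(y,P) = d(y,P')$, and I would compute this common value as a function of how far apart $P, P'$ are; the infimum over all admissible positions is attained in the degenerate tangent case and equals exactly $\cosh^{-1}(\sqrt 2)$. The cleanest model: $P = \{$hemisphere of radius $1$ centered at $(-1,0,0)\}$ wait, rather take the two half-spaces to be the two ``outside'' regions of two horoballs — no: the sharp case is where $P$ and $P'$ are totally geodesic planes tangent to each other at one ideal point, i.e. asymptotic; then $\CH$ of their union, the region between them, is a region whose ``width'' from a midpoint is computed by a hyperbolic right-triangle relation, giving $\cosh d = \sqrt 2$.

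The main obstacle I expect is making the extremality/monotonicity argument rigorous — i.e. proving that pushing $P$ and $P'$ apart, or moving $y$ off the axis of symmetry, only increases $\max(d(y,P),d(y,P'))$, so that the infimum is at the symmetric tangent configuration. I would handle this by a continuity-plus-limit argument: fix $y$, and among half-spaces $H_P$ with $y \notin H_P$ at prescribed distance $d(y,P) = t$, the set of such $P$ is a nice family; disjointness of $H_P, H_{P'}$ forces a constraint linking $t = d(y,P)$ and $t' = d(y,P')$ and the ideal boundary data, and one minimizes $\max(t,t')$ subject to that constraint. By symmetry the minimum has $t = t'$, and then an explicit hyperbolic-trigonometry computation in the half-space or hyperboloid model (using the formula for the distance from a point to a geodesic plane in terms of the Minkowski inner product of the point with the plane's unit normal) pins down the value $\sqrt 2 = \cosh(t)$. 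I would set up the hyperboloid model: $P$, $P'$ correspond to unit spacelike vectors $v, v' \in \R^{3,1}$, $y$ to a unit timelike vector; $d(y,P) = \operatorname{arcsinh} |\langle y, v\rangle|$, and disjointness of the half-spaces is the condition $\langle v, v'\rangle \le -1$ (they face ``away'' from each other), while $y$ lying outside both is a sign condition on $\langle y,v\rangle, \langle y, v'\rangle$. Then the inequality $\sinh^2 t + \sinh^2 t' \ge$ (something) follows from Cauchy–Schwarz-type manipulation of $\langle y, v\rangle, \langle y, v'\rangle, \langle v, v'\rangle$ with $|y|^2 = -1$, $|v|^2 = |v'|^2 = 1$; optimizing gives $\max(\sinh^2 t, \sinh^2 t') \ge 1$, i.e. $\cosh^2 t \ge 2$, which is exactly $d(x,y) \ge w_0$ or $d(x',y)\ge w_0$.
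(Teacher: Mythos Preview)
Your reduction discards essential information. You argue that since $d(x,y)\geq d(y,P)$ and $d(x',y)\geq d(y,P')$, it suffices to prove that for any $y\notin H_P\cup H_{P'}$ one has $\max\bigl(d(y,P),d(y,P')\bigr)\geq w_0$, and you then declare the hypothesis $y\in Q$ ``now irrelevant''. But that stronger two-plane statement is \emph{false}. In the upper half-space model take $P$ and $P'$ to be the vertical planes $x_1=0$ and $x_1=1$, with disjoint closed half-spaces $H_P=\{x_1\leq 0\}$ and $H_{P'}=\{x_1\geq 1\}$. The point $y_t=(\tfrac12,0,t)$ lies in neither half-space, yet
\[
d(y_t,P)=d(y_t,P')=\sinh^{-1}\!\bigl(\tfrac{1}{2t}\bigr)\longrightarrow 0
\quad\text{as }t\to\infty.
\]
Consequently no Cauchy--Schwarz manipulation involving only $\langle y,v\rangle$, $\langle y,v'\rangle$, $\langle v,v'\rangle$ (with $\langle v,v'\rangle\leq -1$ and the sign conditions you mention) can possibly yield $\max(\sinh^2 t,\sinh^2 t')\geq 1$: the inequality is simply not true at that level of generality.

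What rescues the lemma is precisely the third plane. The existence of a plane $Q\ni y$ whose half-space is disjoint from both $H_P$ and $H_{P'}$ forbids $y$ from escaping toward the common ideal point of $P$ and $P'$: in the example above, any hemisphere through $(\tfrac12,0,t)$ whose boundary circle is confined to the strip $\{0\leq x_1\leq 1\}$ has Euclidean radius at most $\tfrac12$, so no such $Q$ exists once $t>\tfrac12$. The paper's proof keeps all three planes in play, slicing by the totally geodesic plane $\Pi$ through $x,x',y$ to obtain three lines in $\HH^2$ bounding pairwise disjoint half-planes; the extremal configuration is then the ideal triangle, where a direct hyperbolic-trigonometry computation gives $\cosh d(x,y)=\sqrt{2}$. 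If you wish to salvage the hyperboloid-model approach you must introduce the unit spacelike normal $u$ to $Q$ together with the constraints $\langle y,u\rangle=0$, $\langle u,v\rangle\leq -1$, $\langle u,v'\rangle\leq -1$; only with these added relations can the optimization reach the correct bound.
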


\begin{proof}
  Let $y\in Q$ minimize the sum of the distance to $x$ and to $x'$, that is
  $$d(x,y) + d(x',y) = \mathrm{min}_{z \in Q} (d(x, z) + d(x', z)).$$ 
  Let $\Pi$ be the plane containing $x,x',y$. Clearly $\Pi$ contains the geodesics from $x$ to $y$ and from $x'$ to $y$, and the lines $\Pi\cap P$, $\Pi\cap P'$ and $\Pi\cap Q$ bound disjoint half-planes of $\Pi$. It is sufficient to prove the analog statement in the hyperbolic plane $\Pi$. In that case, the worst case is obtained when $\Pi\cap P$, $\Pi\cap P'$ and $\Pi\cap Q$ are pairwise asymptotic lines forming a triangle $\Delta$ and $y$ is in the position so that the symmetry in the line orthogonal to $\Pi\cap Q$ at $y$ exchanges $\Pi\cap P$ and $\Pi\cap P'$, while $x\in \Pi\cap P$ and $x'\in \Pi\cap P'$ minimize the distance to $y$ in the corresponding lines, see Figures \ref{PP'Q} and \ref{triang}. The segments $\overline{xy}$ and $\overline{x'y}$ decompose $\Delta$ in four hyperbolic triangles with one right angle, one ideal vertex, and opposite edge of the same length (by symmetry). So these four triangles are all congruent and have angles $0$, $\pi/2$ and $\pi/4$.
\begin{figure}
[hbt] \centering
\includegraphics[height= 8cm]{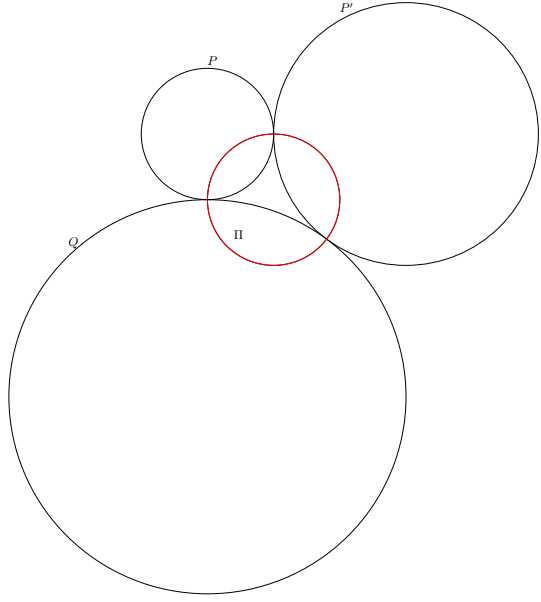}
\caption{The planes $P$, $P'$, $Q$ and $\Pi$ (in red), as in Lemma \ref{lm:PP'Q}.}
\label{PP'Q}
\end{figure}

  By the cosine formula for hyperbolic triangles:
  $$ \cos(0)=\cos(\pi/2)\cos(\pi/4)+\cosh(d(x,y))\sin(\pi/2)\sin(\pi/4)~, $$
  and therefore $d(x,y)=\cosh^{-1}(\sqrt{2})$ as claimed.
\end{proof}

\begin{figure}
[hbt] \centering
\includegraphics[height= 5cm]{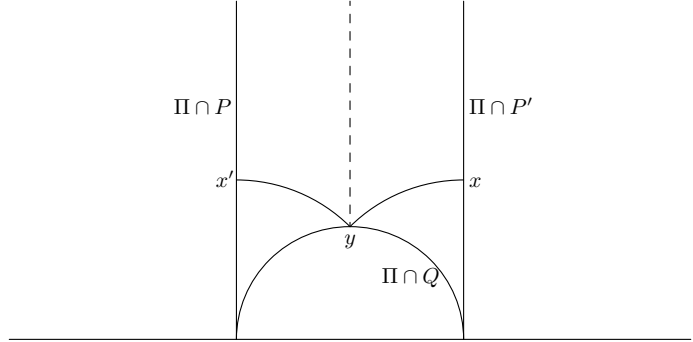}
\caption{The triangle $\Delta$ defined by the lines $\Pi\cap P$, $\Pi\cap P'$ and $\Pi\cap Q$ in the plane $\Pi$.}
\label{triang}
\end{figure}

\begin{lemma}\label{lem:roof}
For every $w<w_0$ there exists $b = b(w) > 0$ satisfying the following: Whenever $P, P'$ and $Q$ are three oriented planes such that $P$ (resp. $P'$) and $Q$ bound disjoint half-spaces and $x\in P$, $x'\in P'$ and $y\in Q$ are points such that $d(x,y)\leq w$ and $d(x',y)\leq w$, then there exists a point $z\in P\cap P'$ such that $d(x, z) + d(x', z) \leq b$.
\end{lemma}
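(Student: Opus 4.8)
The plan is to reduce the statement to an upper bound on $d(x,\ell)$, where $\ell:=P\cap P'$, and then to extract that bound from a one--parameter extremal computation whose threshold turns out to be exactly $w_0=\cosh^{-1}\sqrt2=\sinh^{-1}1$. First I would dispose of the degenerate cases. If $P=P'$, then $x,x'\in P\cap P'$ and any point $z$ on the geodesic $[x,x']$ satisfies $d(x,z)+d(x',z)=d(x,x')\le d(x,y)+d(y,x')\le 2w$, so $b(w)=2w$ works. If $P\ne P'$ but $P\cap P'=\emptyset$, then $P$ and $P'$ bound disjoint closed half-spaces; combining these with the half-space bounded by $Q$ supplied by the hypothesis, one checks from the orientation conditions that $P,P',Q$ bound pairwise disjoint closed half-spaces, so Lemma~\ref{lm:PP'Q} gives $d(x,y)\ge w_0$ or $d(x',y)\ge w_0$, impossible since $w<w_0$. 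Hence $\ell:=P\cap P'$ is a geodesic line. Finally it is enough to find $b'(w)$ with $d(x,\ell)\le b'(w)$: taking $z$ to be the foot of the perpendicular from $x$ to $\ell$ gives $d(x,z)=d(x,\ell)\le b'(w)$ and $d(x',z)\le d(x',x)+d(x,z)\le 2w+b'(w)$, so $b(w):=2b'(w)+2w$ works.

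Now the geometric core. Let $\phi\in(0,\pi)$ be the dihedral angle along $\ell$ of the wedge $W$ cut out by $P$ and $P'$ that contains the half-space bounded by $Q$ (the orientation conventions make $W$ the wedge of angle $\le\pi/2$). The elementary dihedral--wedge formula gives $\sinh d(x,P')=\sinh d(x,\ell)\,\sin\phi$ for $x\in P$, and since $d(x,P')\le d(x,x')\le 2w$ we obtain $\sinh d(x,\ell)\le \sinh(2w)/\sin\phi$. Thus everything comes down to bounding $\phi$ away from $0$. For this, note that $d(y,P)\le d(y,x)\le w$ and $d(y,P')\le w$, so the hypothesis forces $\inf_{q\in Q}\max\bigl(d(q,P),d(q,P')\bigr)\le w$. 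The key estimate is the matching lower bound
\[
  \inf_{q\in Q}\ \max\bigl(d(q,P),\,d(q,P')\bigr)\ \ge\ \sinh^{-1}\!\bigl(\cos(\phi/2)\bigr),
\]
valid whenever the half-space bounded by $Q$ lies inside $W$. Granting it, $\cos(\phi/2)\le\sinh w<\sinh w_0=1$, whence $\phi\ge 2\arccos(\sinh w)>0$ depends only on $w$; substituting into the previous inequality produces $b'(w)$, a quantity that degenerates as $w\to w_0$, as it must.

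To prove the key estimate I would normalise, via an isometry, so that in the upper half--space model $\ell$ is the vertical geodesic over $\{0,\infty\}$; then $P$ and $P'$ are vertical half--planes over two lines through $0$, the wedge $W$ lies over a Euclidean sector $S$ of angle $\phi$ with vertex $0$, and the condition on $Q$ forces $\partial Q$ to be a round circle $\gamma\subset\overline S$, so that $\gamma$ has angular radius at most $\phi/2$ seen from $0$, i.e. $r_\gamma\le |c_\gamma|\sin(\phi/2)$ for its Euclidean centre and radius. Using $\sinh d(q,P)=\mathrm{dist}_{\mathbb C}(q_{\mathbb C},L_P)/\operatorname{Im} q$ (and likewise for $P'$) together with the standard formula for the distance from a point to a hemisphere, the function $\max(d(q,P),d(q,P'))$ becomes explicit on $Q$; one then checks that it is minimised — over all admissible circles $\gamma$ and all $q\in Q$ — exactly when $\gamma$ is the circle inscribed in $S$ and $q$ lies on the symmetry plane of the configuration, where a one--variable optimisation returns the value $\sinh^{-1}(\cos(\phi/2))$.

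The main obstacle is this last computation, and within it the claim that the inscribed circle together with the symmetric point is the extremal configuration: one must rule out all off--centre or smaller circles $\gamma\subset\overline S$ and all asymmetric points $q\in Q$. This is exactly what pins the constant to $w_0=\cosh^{-1}\sqrt2$; it is the analogue, for a hemisphere inscribed in a dihedral wedge, of the extremal triangle with a right angle, an ideal vertex and an angle $\pi/4$ that produces the same constant in Lemma~\ref{lm:PP'Q}. A secondary point requiring care is the reduction in the first paragraph: one must verify that in the (non--generic) configurations with $P\cap P'=\emptyset$ the orientation hypotheses genuinely permit an application of Lemma~\ref{lm:PP'Q}.
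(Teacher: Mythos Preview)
The paper's proof is a two-line soft argument: Lemma~\ref{lm:PP'Q} forces $P\cap P'\neq\emptyset$, and then compactness of the space of pointed configurations (modulo isometry, $x,x'$ lie in the $w$-ball about $y$ and each plane through its marked point ranges over a $2$-sphere) yields $b(w)$. Your route is entirely different and more explicit: you aim for a closed-form $b(w)$ via the dihedral relation $\sinh d(x,P')=\sinh d(x,\ell)\,\sin\phi$ together with a lower bound on the wedge angle $\phi$.

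The genuine gap is the parenthetical ``the orientation conventions make $W$ the wedge of angle $\le\pi/2$'': nothing in the stated hypotheses forces this, and without it your bound $\sinh d(x,\ell)\le\sinh(2w)/\sin\phi$ is useless when $\phi$ is near~$\pi$. Concretely, in the upper half-space model let $P,P'$ be vertical half-planes through the vertical geodesic $\ell$ over the origin, at angle $\epsilon$ to one another, oriented so that $W=H_P^-\cap H_{P'}^-$ has angle $\pi-\epsilon$; put $x=(N,0,1)\in P$, let $x'\in P'$ be the foot of the perpendicular from $x$ (so $d(x,x')=\sinh^{-1}(N\sin\epsilon)$, arbitrarily small for $\epsilon\ll 1/N$), and choose $Q$ a hemisphere with $\overline{H_Q^+}\subset W$ near $x$ so that $d(x,Q)<w$. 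Then $d(x,y),\,d(x',y)\le w$ can be arranged while $d(x,\ell)=\sinh^{-1}(N)\to\infty$, so no $z\in\ell$ works. Thus the lemma as literally stated already admits this configuration, and the paper's bare compactness step faces the same difficulty (along this sequence $P'\to P$, and $\inf_{z\in P\cap P'}[d(x,z)+d(x',z)]$ is not upper semicontinuous). What makes Corollary~\ref{lm:b} go through is the unstated constraint $x'\in\overline{H_P^-}$ and $x\in\overline{H_{P'}^-}$, automatic there since $x,x'\in\partial^+\cC$; under it your scheme is easily repaired: when $\phi>\pi/2$ the nearest point to $x$ on the half-plane $P'\cap\overline{H_P^-}$ already lies on $\ell$, giving $d(x,\ell)\le d(x,x')\le 2w$, while for $\phi\le\pi/2$ your estimate applies verbatim.
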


\begin{proof}
For any $P \ni x$, $P' \ni x'$, and $Q \ni y$ as in the lemma statement, $P \cap P'$ is non-empty by Lemma~\ref{lm:PP'Q}.
The lemma follows by compactness of the space of such pointed triples of planes.
\end{proof}

Now, given a Jordan curve $C$ in $\CP^1$, denote by $\cC=\mathrm{CH}(C)$. Let $\pi_+\co\partial_-\cC\to \partial_+\cC$ be a nearest point projection map, meaning for each $y \in \partial_- \cC$, $\pi_+(y) \in \partial_+ \cC$ realizes the distance $d(y, \partial_+ \cC)$. Note that $\pi_+$ is not uniquely defined by this property, since the collection of points in $\partial_+ \cC$ nearest to $y \in \partial_- \cC$ may, in general, be a non-singleton compact set. In particular, we do not assume $\pi_+$ is continuous. Alternatively, one could consider a uniquely defined ``coarse" projection map which is set-valued, but we choose not to do this. Similarly, let $\pi_- \co \partial_+ \cC \to \partial_- \cC$ be a nearest point projection map in the opposite direction. Let $d_{\pm}$ be the induced metric on $\partial_{\pm}\cC$. 

\begin{Corollary} \label{lm:b}
For every $w < w_0$ there exists constants $a,b > 0$ such that whenever $w_\partial(C) \leq w$, we have:
\begin{enumerate}
\item If $y,y' \in \partial_- \cC$ satisfy $d(y,y') \leq a$, then $d_+(\pi_+(y), \pi_+(y')) \leq b$.
\item If $x, x' \in \partial_+ \cC$ satisfy $d(x,x') \leq a$, then $d_-(\pi_-(x), \pi_-(x')) \leq b$.
\end{enumerate}
\end{Corollary}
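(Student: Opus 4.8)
The plan is to derive Corollary~\ref{lm:b} directly from Lemma~\ref{lem:roof} by building, for each pair of nearby points on $\partial_-\cC$, an auxiliary triple of supporting planes to which the lemma applies. I would prove item (1); item (2) is identical by exchanging the roles of $\partial_+\cC$ and $\partial_-\cC$ and using $\pi_-$ in place of $\pi_+$. The point to keep in mind is that $\partial_-\cC$ is a pleated surface whose induced metric is hyperbolic, so the intrinsic distance $d_-$ dominates the ambient distance $d$; thus a bound on $d(x,x')$ for $x=\pi_+(y)$, $x'=\pi_+(y')$ will not immediately be what we want, and conversely we will need to work with the right metric at each stage.

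First I would fix $y,y'\in\partial_-\cC$ with $d(y,y')\le a$, where $a$ is a constant to be chosen small, and set $x=\pi_+(y)$, $x'=\pi_+(y')$. Let $Q$ be a support plane of $\cC$ at $y$ (oriented so that $\cC$ lies on the negative side), and similarly let $Q'$ be a support plane at $y'$. Let $P$ be a support plane of $\cC$ at $x\in\partial_+\cC$ and $P'$ a support plane at $x'$. By definition of the nearest point projection and of $w_\partial(C)$, we have $d(x,y)=d(y,\partial_+\cC)\le w_\partial(C)\le w$ and likewise $d(x',y')\le w$. Since $x\in\partial_+\cC$ and $Q$ is a support plane with $\cC$ on one side, $P$ and $Q$ bound disjoint (closed) half-spaces — here one has to be slightly careful about the degenerate case where the half-spaces share a boundary point at infinity, but after perturbing $a$ downward and using that $x$ is an interior point of $\cC$ relative to $Q$ one can arrange genuine disjointness, or else handle the limiting case by a separate compactness argument. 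The key geometric input is that because $d(y,y')\le a$ is small, $x'$ is close to $x$: indeed $d(x,x')\le d(x,y)+d(y,y')+d(y',x')\le 2w+a$, and more importantly, a support plane $P'$ at $x'$ becomes, as $a\to 0$, nearly a support plane at $x$, so $P$ and $Q$ together with $P'$ can be fed into Lemma~\ref{lem:roof}: we get $z\in P\cap P'$ with $d(x,z)+d(x',z)\le b_0=b(w)$.

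The remaining step is to pass from the existence of this $z$ with $d(x,z)+d(x',z)\le b_0$ to a bound on the intrinsic distance $d_+(x,x')$ along $\partial_+\cC$. This is where I expect the main obstacle to lie. The point $z$ lies on the intersection of two support planes $P\cap P'$, which is a geodesic; this geodesic need not lie on $\partial_+\cC$, but $z$ is within bounded distance $b_0$ of both $x$ and $x'$. One wants to say: $x$ and $x'$ are joined by a path in $\partial_+\cC$ of bounded length. The natural argument is that the nearest-point projection $r$ from $\HH^3$ (or from a neighborhood of $\cC$) onto the convex set $\cC$, followed by projection onto $\partial_+\cC$, is distance non-increasing in the appropriate sense, so $r(z)$ is a point of $\partial_+\cC$ within bounded distance of both $x$ and $x'$; then one uses that on a pleated surface with hyperbolic induced metric, ambient-distance balls of bounded radius have bounded intrinsic diameter — this last fact again by compactness, since $\partial_+\cC$ is $1$-Lipschitz-equivalent to $\HH^2$ locally with uniform constants. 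Concatenating, $d_+(x,x')\le d_+(x,r(z))+d_+(r(z),x')\le b$ for a new constant $b=b(w)$. So the proof is: (i) choose $a$ so small that a support plane at $x'$ is almost a support plane at $x$, making Lemma~\ref{lem:roof} applicable; (ii) invoke Lemma~\ref{lem:roof} to produce $z\in P\cap P'$ with $d(x,z)+d(x',z)\le b_0$; (iii) project $z$ back to $\partial_+\cC$ and use local quasi-isometry between the ambient and intrinsic metrics on the pleated boundary to convert the bound on $d$ into a bound on $d_+$. The delicate points are the disjointness of half-spaces near the degenerate configuration and the uniformity (in $C$) of the comparison between ambient and intrinsic distance on $\partial_\pm\cC$, both of which I would settle by a compactness argument on the space of pointed convex hulls with $w_\partial\le w$.
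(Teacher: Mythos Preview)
Your overall strategy---apply Lemma~\ref{lem:roof} to support planes $P,P'$ at $x=\pi_+(y),\,x'=\pi_+(y')$ together with a support plane $Q$ at $y$, then convert to intrinsic distance on $\partial_+\cC$---is exactly the paper's. But both steps are mishandled. For the first, the hypotheses of Lemma~\ref{lem:roof} hold for a much simpler reason than the one you give: $P'$ and $Q$ bound disjoint half-spaces \emph{automatically}, because they are support planes to $\cC$ on opposite boundary components; there is no need for $P'$ to be ``nearly a support plane at $x$'' (and indeed this need not be true---$\partial_+\cC$ can bend sharply between $x$ and $x'$ however close they are). What you actually have to check is the distance hypothesis $d(x',y)\le w'$ for some $w'<w_0$, and this is just the triangle inequality $d(x',y)\le d(x',y')+d(y',y)\le w+a$. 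So one picks any $w'\in(w,w_0)$, sets $a=w'-w$, and applies Lemma~\ref{lem:roof} with parameter $w'$ to get $z\in P\cap P'$ with $d(x,z)+d(x',z)\le b(w')$; your claim $b_0=b(w)$ is not what the lemma gives.

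The second step has a genuine gap. Your assertion that on a convex pleated surface ``ambient-distance balls of bounded radius have bounded intrinsic diameter'' is false without further hypotheses: a convex pleated disk can fold so that intrinsically distant points come close in $\HH^3$ (take two faces meeting along a bending line with large exterior angle). The ``compactness on the space of pointed convex hulls with $w_\partial\le w$'' that you invoke is neither stated nor available at this point. The paper bypasses this entirely by noticing that the broken geodesic $x\to z\to x'$ already lies in $P\cup P'$, i.e.\ in support planes of $\cC$ on the $\partial_+$ side; nearest-point projection onto the convex set (whose boundary there is $\partial_+\cC$) is $1$-Lipschitz and fixes $x$ and $x'$, so the image is a path \emph{in} $\partial_+\cC$ of length at most $d(x,z)+d(z,x')\le b$. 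That gives $d_+(x,x')\le b$ directly, with no comparison of ambient and intrinsic metrics needed.
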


\begin{proof}
Set $w'$ so that $w < w' < w_0$ and let $b = b(w') > 0$ be the constant from Lemma~\ref{lem:roof}. Define $a = w' - w$.  Assume $w_\partial(C) \leq w$. We prove the first statement. The second is similar.

Let $y,y' \in \partial_- \cC$ be such that $d(y,y') \leq a$, and let $x = \pi_+(y)$ and $x' = \pi_+(y')$. Let $P,P',Q,Q'$ be support planes to $\cC$ at $x,x',y,y'$ respectively. By definition of boundary width, $d(x,y), d(x',y') \leq w_\partial(C)\leq w $.  Hence $d(x,y) \leq w'$ and $d(x',y) \leq w + a = w'$.
Since $P$ and $Q$ bound disjoint half-spaces as do $P'$ and $Q$, Lemma~\ref{lem:roof} gives a point $z \in P \cap P'$ so that $d(x,z) + d(x',z) \leq b$. Hence there is a path along $P \cup P'$ from $x$ to $x'$ of distance $\leq b$. The projection of this path onto $\partial_+ \cC$ has less or equal length (projection onto a convex set is contracting). Hence $d_+(x,x')\leq b$ as desired.
\end{proof}

\begin{remark}
Similarly  as in the proof of Corollary~\ref{lm:b}, Lemma~\ref{lem:roof} also implies that for each $w < w_0$, there exists $b = b(w) > 0$, so that whenever $w(C) \leq w$ the following holds: if $y \in \partial_- \cC$ and $x,x' \in \partial_+ \cC$ realize the minimum distance $d(y, \partial_+ \cC)$, then $d(x,x') \leq b$. In other words, the different possibly choices of a nearest point projection map $\pi_+$ are all within a uniform distance $b$ of one another, provided the width is smaller than $w$. 
\end{remark}

In the final lemma of this section, we show that the maps $\pi_+$ and $\pi_-$ are quasi-inverse quasi-isometries between $\partial_+ \cC$ and $\partial_- \cC$ whenever the width is small enough. 

\begin{lemma} \label{lm:pi-}
 Assume $w_\partial(C)\leq w< w_0$. Then the closest-point projection maps $\pi_+, \pi_-$ are quasi-inverse quasi-isometries with constants depending on $w$.
\end{lemma}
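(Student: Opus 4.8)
The plan is to establish three facts and then invoke the standard principle that a coarsely Lipschitz map which admits a coarsely Lipschitz coarse inverse is a quasi-isometry. Write $\cC=\mathrm{CH}(C)$ and let $d$ be the distance in $\HH^3$. The first fact is that $\pi_+$ is coarsely Lipschitz from $(\partial_-\cC,d_-)$ to $(\partial_+\cC,d_+)$, and likewise $\pi_-$ the other way. To see it for $\pi_+$: given $y,y'\in\partial_-\cC$ I would take a path in $\partial_-\cC$ from $y$ to $y'$ of $d_-$-length at most $d_-(y,y')+1$, subdivide it into at most $d_-(y,y')/a+2$ sub-arcs of $d_-$-length $\le a$ (with $a=a(w)$ the constant of Corollary~\ref{lm:b}), observe that consecutive breakpoints $y_i,y_{i+1}$ satisfy $d(y_i,y_{i+1})\le d_-(y_i,y_{i+1})\le a$, apply Corollary~\ref{lm:b}(1) to each consecutive pair, and sum, obtaining $d_+(\pi_+y,\pi_+y')\le (b/a)\,d_-(y,y')+2b$. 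The argument for $\pi_-$ is identical using Corollary~\ref{lm:b}(2).

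The second fact is that $\pi_-\circ\pi_+$ is at uniformly bounded $d_-$-distance from $\mathrm{id}_{\partial_-\cC}$, and $\pi_+\circ\pi_-$ from $\mathrm{id}_{\partial_+\cC}$; this is the one genuinely geometric point. Fix $y\in\partial_-\cC$, set $x=\pi_+(y)$ and $y''=\pi_-(x)$. By the defining property of the projections and of the boundary width, $d(y,x)=d(y,\partial_+\cC)\le w_\partial(C)\le w<w_0$ and $d(x,y'')=d(x,\partial_-\cC)\le w_\partial(C)\le w<w_0$. Choose support planes $P,P'$ of $\partial_-\cC$ at $y,y''$ and $R$ of $\partial_+\cC$ at $x$. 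As in the proof of Corollary~\ref{lm:b}, each of $\{P,R\}$ and $\{P',R\}$ bounds disjoint half-spaces, so Lemma~\ref{lem:roof} applies with $R$ in the role of $Q$, producing $z\in P\cap P'$ with $d(y,z)+d(y'',z)\le b'$, where $b'=b'(w)$. The broken geodesic $[y,z]\cup[z,y'']$ lies in $P\cup P'$ and has length $\le b'$. Let $\cH_-$ be the closed convex region bounded by $\partial_-\cC$ and containing $\cC$ (so $\partial\cH_-\cap\HH^3=\partial_-\cC$); since $P$ and $P'$ are support planes of $\cH_-$, the nearest-point projection onto $\cH_-$, which is $1$-Lipschitz because projection onto a convex subset of $\HH^3$ is contracting, carries every point of $P\cup P'$ into $\partial_-\cC$. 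Projecting the broken geodesic therefore gives a path in $\partial_-\cC$ from $y$ to $y''$ of $d_-$-length $\le b'$, so $d_-(y,\pi_-\pi_+y)\le b'$. The estimate for $\pi_+\pi_-$ on $\partial_+\cC$ is symmetric.

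Given these two facts I would conclude as follows. For $y,y'\in\partial_-\cC$, writing $x=\pi_+y$ and $x'=\pi_+y'$, the second fact together with coarse Lipschitzness of $\pi_-$ gives $d_-(y,y')\le 2b'+d_-(\pi_-x,\pi_-x')\le 2b'+c''+\lambda''\,d_+(x,x')$, which combined with the upper bound of the first fact shows $\pi_+$ is a quasi-isometric embedding; coarse surjectivity of $\pi_+$ follows since every $x\in\partial_+\cC$ lies within $b'$ of $\pi_+(\pi_-x)$. Hence $\pi_+$, and symmetrically $\pi_-$, is a quasi-isometry, and $\pi_-$ is a quasi-inverse of $\pi_+$, with all constants depending only on $w$.

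The main obstacle is the second fact, and within it the only subtle point is the assertion that the chosen support planes bound disjoint half-spaces, which is precisely where the hypothesis $w<w_0$ is used, through Lemma~\ref{lm:PP'Q}. One should also take a moment over the degenerate situations — non-uniqueness of support planes at bending lines of $\partial_\pm\cC$, and the fact that $\partial_-\cC$ and $\partial_+\cC$ are asymptotic to $C$ at infinity — but these should be handled by exactly the compactness argument already used to prove Lemma~\ref{lem:roof}.
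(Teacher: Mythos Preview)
Your proof is correct and follows essentially the same approach as the paper's: you use Corollary~\ref{lm:b} together with a subdivision argument for the coarse Lipschitz bound, and then Lemma~\ref{lem:roof} (plus the $1$-Lipschitz projection onto the convex region bounded by $\partial_-\cC$) for the coarse-inverse bound, exactly as the paper does. Your treatment of the second fact is in fact more explicit than the paper's, which condenses the same argument into the single sentence ``it follows from Lemma~\ref{lem:roof} that $d_-(y,y')\le b$''.
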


\begin{proof}
Let $a,b > 0$ be the constants from Corollary~\ref{lm:b}. Let $y, y' \in \partial_- \cC$, and let $x = \pi_+(y), x' = \pi_+(y')$. By subdividing the geodesic in $\partial_- \cC$ from $y$ to $y'$ into $N = \lceil\frac{d_-(y, y')}{a}\rceil$ arcs of length $\leq a$, and applying Corollary~\ref{lm:b} $N$ times, we obtain that 
$$d_+(x,x') \leq Nb \leq (b/a)d_-(y,y') + b.$$
Similarly for any $x,x' \in \partial_+ \cC$ and
$y = \pi_-(x)$ and $y' = \pi_-(x')$,
 $$d_-(y,y') \leq (b/a)d_+(x,x') + b.$$
Further, if $x = \pi_+(y)$ and $y' = \pi_-(x)$, 
 then it follows from Lemma~\ref{lem:roof} that $d_-(y,y') \leq b$. Hence $\pi_- \circ \pi_+$ is bounded distance from the identity map. It follows that $\pi_+$ and $\pi_-$ are quasi-inverse quasi-isometries with constants depending only on $a$ and $b$, which in turn depend only on $w$.
\end{proof}

\subsection{Proof of Theorem \ref{tm:proj}} \label{proofC}

We reformulate Theorem \ref{tm:proj} below as Proposition \ref{prop:proj-hyp}. This characterization of quasicircles in hyperbolic geometry is an analog of a result obtained in the AdS setting in \cite{convexhull}.

\begin{prop} \label{prop:proj-hyp}
Let $\cC \subset \HH^3$ be the convex hull of a Jordan curve $C \subset \CP^1$, its two boundary components denoted $\partial^+ \cC$ and $\partial^- \cC$. Consider a map $\pi_+\co \partial^- \cC \to \partial^+ \cC$ sending a point $x \in \partial^+ \cC$ to the (or to one of the compactly many) nearest point(s) on $\partial^- \cC$. Then $C$ is a quasicircle if and only if $\pi_+$ is a quasi-isometry. Further the quasi-isometry constants are bounded in terms of the quasicircle constant, and conversely.
\end{prop}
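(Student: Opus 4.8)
The plan is to prove both implications by relating the intrinsic geometry of $\partial^\pm \cC$, the nearest-point projection $\pi_+$, and the conformal structure on $C$ via the standard circle-packing-free "boundary at infinity" machinery for hyperbolic convex hulls. First I recall the key fact (essentially due to Sullivan, Epstein--Marden, Thurston) that each component $\partial^\pm \cC$, equipped with its induced path metric $d_\pm$, is isometric to a hyperbolic surface, and that it carries a "bending lamination" so that the nearest-point retraction $r_\pm \co \HH^3 \setminus \cC^\circ \to \partial^\pm \cC$ extends continuously to a map from the corresponding component $\Omega_\pm$ of $\CP^1 \setminus C$ onto $\partial^\pm \cC$; moreover $r_\pm$ restricted to $\Omega_\pm$ is a conformal-to-hyperbolic comparison map which is bi-Lipschitz onto its image after uniformization (with universal constants — the $2$-quasiconformality of the nearest point projection). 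Consequently, $C$ is a quasicircle if and only if the two hyperbolic surfaces $\partial^+ \cC$ and $\partial^- \cC$ are "compatible" in the sense that the identity map $\Omega_+ \cup C \cup \Omega_- \to \CP^1$ glues them quasiconformally; by Ahlfors--Beurling / the measurable Riemann mapping theorem together with the quasi-isometric rigidity of $\HH^2$, this in turn is equivalent to the boundary extension $\partial \widetilde{\partial^+\cC} \to \partial \widetilde{\partial^-\cC}$ induced by $\pi_+$ being a quasisymmetry of $\RP^1$.

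The forward direction ($C$ a $k$-quasicircle $\Rightarrow$ $\pi_+$ a quasi-isometry) I would handle as follows. If $C$ is a quasicircle, then $\cC$ is within bounded Hausdorff distance of a "fattened" pleated set with uniformly bounded geometry; more concretely, by a theorem of Sullivan--Epstein--Marden (as quantified e.g. in work of Bridgeman--Canary), a quasicircle with quasiconformal constant $K$ has convex core boundary whose bending measures and injectivity radii are controlled, and the two boundary components $\partial^\pm \cC$ are uniformly quasi-isometric to the same Gromov-hyperbolic space, namely $C$ with its "Whitney"/hyperbolic metric. The projection $\pi_+$ then coarsely commutes with these quasi-isometries to $C$: for $y \in \partial^- \cC$, the nearest point $\pi_+(y)$ lies over roughly the same point of $C$ as $y$ does under $r_-$, up to an error controlled by the width, which is finite (and in fact bounded in terms of $K$; this is the statement, noted in the introduction, that quasicircles have finite width, together with the fact that for quasicircles the width is bounded in terms of $K$). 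Composing, $\pi_+$ is a quasi-isometry with constants depending only on $K$.

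For the reverse direction ($\pi_+$ a quasi-isometry $\Rightarrow$ $C$ a quasicircle) I would argue: a quasi-isometry between the (Gromov-hyperbolic, in fact $\mathrm{CAT}(-1)$ since they are hyperbolic surfaces) spaces $\partial^- \cC$ and $\partial^+ \cC$ extends to a quasisymmetric homeomorphism of their Gromov boundaries $\partial_\infty \widetilde{\partial^- \cC} \cong \partial_\infty \widetilde{\partial^+ \cC} \cong \RP^1$. One then identifies both Gromov boundaries with $C$ via the nearest-point retractions $r_\pm$, which extend continuously to homeomorphisms $\partial_\infty \widetilde{\partial^\pm\cC} \to \widetilde C$ (here $C$ is a Jordan curve so its "universal cover" is an arc, but one works equivariantly or just locally), and checks that under this identification the boundary extension of $\pi_+$ is the identity on $C$ up to a quasisymmetry. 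This forces the two conformal welding data coming from $\Omega_+$ and $\Omega_-$ to differ by a quasisymmetric homeomorphism, which by the standard characterization (Ahlfors--Bers) means $C$ is a quasicircle, with constants controlled by the quasi-isometry constants of $\pi_+$. Here I would invoke the AdS analog in \cite{convexhull} as a template and transcribe the argument, the hyperbolic case being if anything more standard.

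The main obstacle — and the place where I expect most of the work to be — is the quantitative control in the forward direction when the width is allowed to be large: one must show that $\pi_+$ does not "wrap around" or fail to be proper, i.e. that distinct points of $\partial^- \cC$ far apart cannot project near the same point of $\partial^+ \cC$. This is where finiteness (and $K$-boundedness) of the width is essential, since it bounds the "thickness" of $\cC$ and hence prevents the two boundary components from being dramatically different in their large-scale geometry; making the comparison map between $\partial^\pm\cC$ and the hyperbolic structure on $C \cong \partial_\infty \Gamma$ explicit and uniform is the technical heart. The reverse direction's subtlety, by contrast, is purely a matter of correctly matching up the three a priori different boundary circles ($\partial_\infty \widetilde{\partial^+\cC}$, $\partial_\infty \widetilde{\partial^-\cC}$, and $C$ itself), which is bookkeeping once the continuity of the retraction maps at infinity is in hand.
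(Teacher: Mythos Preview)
Your reverse direction is essentially the paper's argument: the paper composes $\pi_+$ with Sullivan's universal $K$-bilipschitz maps $b^\pm\co \partial^\pm\cC \to \Omega^\pm$ (with respect to the Poincar\'e metrics), uniformizes to get a quasi-isometry $\HH^2 \to \HH^2$, takes the boundary quasisymmetric extension, realizes it as the boundary of a quasiconformal map, pushes back to $\Omega_-\to\Omega_+$ extending the identity on $C$, and finishes with Ahlfors reflection. Your description via Gromov boundaries and conformal welding is the same argument in slightly different language.

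Your forward direction, however, takes a different route from the paper and has a soft spot. The paper does \emph{not} go through bending-measure bounds or a ``Whitney metric'' model for $C$. Instead it invokes Tukia--V\"ais\"al\"a to extend the $k$-quasiconformal map taking $\RP^1$ to $C$ to an $L$-bilipschitz diffeomorphism $f\co\HH^3\to\HH^3$, and then proves directly (this is the content of the paper's Lemma~\ref{lem:keyy}) two facts with constants depending only on $L$: (i) $w(C)\le \eta_0$, and (ii) the path metrics $d_\pm$ on $\partial^\pm\cC$ are $(L',A')$-quasi-isometrically embedded in $\HH^3$. Given (i) and (ii), three lines of triangle inequalities show $\pi_+$ is a quasi-isometric embedding, and the bound $d(\pi_-\pi_+(y),y)\le 2\eta_0$ gives the coarse inverse.

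Your plan instead produces quasi-isometries from $\partial^\pm\cC$ to a common model and then asserts that $\pi_+$ ``coarsely commutes'' with them because the width is bounded. The gap is precisely point (ii) above: the width bound tells you $d_{\HH^3}(y,\pi_+(y))\le \eta_0$, but to conclude anything about $d_+(\pi_+(y),\pi_+(y'))$ you must compare the \emph{intrinsic} metric $d_+$ with the ambient $\HH^3$-metric. That comparison is exactly the statement that $\partial^+\cC$ is quasi-isometrically embedded in $\HH^3$, which does not follow from Sullivan's theorem or from bending-measure bounds alone (those compare $d_\pm$ to the Poincar\'e metric on $\Omega^\pm$, not to $d_{\HH^3}$). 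The paper's use of the bilipschitz extension $f$ is what supplies this embedding cleanly; your route would need an independent argument here, and you have correctly flagged this as ``the technical heart'' without indicating how you would carry it out.
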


In the proof we will use the following result.

\begin{lemma}\label{lem:keyy}
Suppose $f\co \HH^3 \to \HH^3$ is an $L$-bilipschitz diffeomorphism. Let $\overline{f}$ be the extension of $f$ to $\HH^3 \cup \CP^1$, let $C = \overline{f}(\RP^1)$, let $\cC = \CH(C) \subset \HH^3$, and let $\partial^+ \cC, \partial^- \cC$ be the two boundary components of $\cC$ in $\HH^3$. Then there are constants $\eta_0, L', A'$ depending only on $L$ so that the width satisfies $w(\cC) \leq \eta_0$ and the path metrics on $\partial^+ \cC$ and on $\partial^- \cC$ are $(L', A')$ quasi-isometrically embedded. 
\end{lemma}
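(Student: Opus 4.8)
The plan is to transport everything back to the standard picture via $f$. Since $f$ is $L$-bilipschitz, its boundary extension $\overline f$ is $K$-quasiconformal with $K$ depending only on $L$ (Mori's theorem / the standard correspondence between bilipschitz maps of $\HH^3$ and quasiconformal maps of the sphere), so $C = \overline f(\RP^1)$ is a $K$-quasicircle. First I would bound the width $w(\cC)$. The convex hull $\CH(\RP^1)$ of the round circle is the totally geodesic hyperbolic plane $\HH^2 \subset \HH^3$, which has width $0$; I claim $f(\HH^2)$ is a quasi-convex ``slab'' comparable to $\CH(C)$. Precisely, convexity is not preserved by $f$, but $f(\HH^2)$ is within bounded Hausdorff distance of $\cC = \CH(C)$: the set $f(\HH^2)$ has the same boundary at infinity $C$ as $\cC$, it is $L$-bilipschitz to a plane hence ``$\delta$-quasiconvex'' with $\delta = \delta(L)$, and a standard argument in $\delta$-hyperbolic geometry shows that any such set lies within Hausdorff distance $D = D(L)$ of the genuine convex hull of its limit set. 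Consequently every point of $\cC$ is within $D$ of $f(\HH^2)$, and a point of $\cC$ at (large) distance $t$ from $\partial^+\cC$ would be at distance $\geq t - D$ from $f(\HH^2)$ on one side; pushing through $f^{-1}$ this forces a point of $\HH^3$ at distance $\geq (t-D)/L$ from $\HH^2$ that is nonetheless in $\CH(\RP^1)$ up to bounded error — contradiction once $t$ exceeds an explicit $\eta_0(L)$. This gives $w(\cC) \le \eta_0$.

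Next I would control the intrinsic metrics on $\partial^\pm \cC$. The key point is that $\partial^\pm \cC$ each lie within bounded Hausdorff distance of $f(\HH^2)$ (by the previous paragraph), and $f(\HH^2)$, with its induced path metric, is $(L, 0)$-quasi-isometric to $\HH^2$ via $f$ (an $L$-bilipschitz embedded surface carries an induced length metric that is $L^2$-bilipschitz to the domain metric, so in particular a quasi-isometry onto its image). Then I need the comparison: if $S_1, S_2$ are two properly embedded disks in $\HH^3$ within Hausdorff distance $D$ of one another, and each is uniformly quasi-convex (has the property that the ambient geodesic between two of its points stays within bounded distance of the surface — true here because $\partial^\pm\cC$ bound a convex set, and true for $f(\HH^2)$ by bilipschitz-invariance of this property from $\HH^2$), then the induced path metrics on $S_1$ and $S_2$ are quasi-isometric with constants depending only on $D$ and the quasi-convexity constant. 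Applying this with $S_1 = \partial^\pm\cC$ and $S_2 = f(\HH^2)$, and composing with the quasi-isometry $f(\HH^2) \simeq \HH^2$, yields that $\partial^+\cC$ and $\partial^-\cC$ are each $(L', A')$-quasi-isometrically embedded (equivalently, quasi-isometric to $\HH^2$), with $L', A'$ depending only on $L$.

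The main obstacle is the quasi-convexity / Hausdorff-closeness comparison: showing rigorously that an $L$-bilipschitz image of $\HH^2$ stays boundedly close to the honest convex hull of its ideal boundary, and that boundedly-close uniformly-quasiconvex disks have comparable intrinsic metrics. Both are ``folklore'' facts in coarse hyperbolic geometry, but the second one in particular requires care: one shows that for $p, q \in S_1$, a path in $S_1$ from $p$ to $q$ can be pushed to a nearby path in $S_2$ of comparable length by projecting (nearest-point projection within $\HH^3$ onto $S_2$, which is coarsely well-defined and coarsely Lipschitz on the $D$-neighborhood of $S_2$ by quasi-convexity), and symmetrically; the quasi-convexity is exactly what makes nearest-point projection onto these surfaces coarsely Lipschitz. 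I expect the write-up to lean on $\delta$-hyperbolicity of $\HH^3$ and the Morse lemma (quasi-geodesics stay near geodesics) to make all of these ``bounded error'' statements quantitative in $L$ alone. The finiteness of the width $w(\cC) \le \eta_0$ then also follows a posteriori, but giving it directly as above via the slab comparison is cleaner.
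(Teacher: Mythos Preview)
Your overall strategy matches the paper's and the width bound can be made to work along these lines, though the contradiction step as written (``at distance $\geq t - D$ from $f(\HH^2)$'') does not follow from what precedes it; the paper instead shows directly that $f^{-1}(\partial^\pm\cC)$ lies in the $(L\delta+D)$-neighborhood of $\HH^2$ (Morse lemma applied to the bending geodesics), and then bounds $d(f^{-1}(z), f^{-1}(\partial^\pm\cC))$ for $z \in \cC$ by examining where the geodesic through $f^{-1}(z)$ orthogonal to $\HH^2$ meets $f^{-1}(\partial\cC)$.

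The genuine gap is the quasi-isometric embedding of $\partial^\pm\cC$. Your mechanism---coarse nearest-point projection of a path in $f(\HH^2)$ onto $\partial^+\cC$---is circular: coarse Lipschitz control on such a projection only yields a \emph{chain of points} on $\partial^+\cC$ with bounded consecutive \emph{ambient} distance, and joining those into a \emph{path} of controlled length in $\partial^+\cC$ requires knowing that ambient-close points of $\partial^+\cC$ are path-close, which is precisely the statement being proved. What would break the circularity is a projection that is $1$-Lipschitz on \emph{lengths of curves}; nearest-point retraction onto the convex side of $\partial^+\cC$ has this property, but only for curves lying entirely outside $\cC$ on the $+$ side, and $f(\HH^2)$ (with ideal boundary $C$) typically crosses $\cC$. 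The paper's fix is to replace $\HH^2$ by the equidistant surface $\Sigma_r$ at signed distance $r > L\delta + D$: the first part of the argument shows $f(\Sigma_r)$ then lies entirely on the concave side of $\partial^+\cC$, its intrinsic metric is $L\cosh(r)$-bilipschitz to $\HH^2$, and now the honest $1$-Lipschitz projection onto $\partial^+\cC$ converts paths on $f(\Sigma_r)$ into paths on $\partial^+\cC$ of no greater length. This is the step your outline is missing.
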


\begin{proof}
  Let $\delta = \delta_{\HH^2}$ be the $\delta$-hyperbolicity constant for the hyperbolic plane $\HH^2$. If $x \in \partial^+\cC$, then $x$ lies in the convex hull of three points of $C$, and hence is at distance at most $\delta$ away from a geodesic $\Delta$ of $\HH^3$ contained in $\partial^+ \cC$. Since $f^{-1}$ is an $L$-bilipschitz diffeomorphism, $f^{-1}(\Delta)$ is a smooth quasigeodesic with endpoints in $\RP^1$. By the Morse Lemma, $f^{-1}(\Delta)$ lies in a $D$-neighborhood of the geodesic $\Delta'$ in $\HH^3$ with the same endpoints, where $D>0$ depends only on $L$. Let $\mathcal H \subset \HH^3$ denote the totally geodesic hyperbolic plane bounded by $\RP^1$.
  Since the endpoints of $\Delta'$ are contained in $\RP^1$, $\Delta'$ is contained in $\mathcal H$ and hence all points of  $f^{-1}(\Delta)$ are within distance at most $D$ from $\mathcal H$. We conclude that for any point $x\in \partial^+\cC$, $f^{-1}(x)$ is at distance at most $L\delta+D$ from $\mathcal H$.
  
  The orthogonal projection of $f^{-1}(\partial^+\cC)$ on $\mathcal H$ is surjective, since $\mathcal H$ is totally geodesic and $\partial_\infty(f^{-1}(\partial^+\cC))=\overline{f}^{-1}(C) = \RP^1$. It follows that for all $y\in \mathcal H$, $y$ is at distance at most $L\delta+D$ from $f^{-1}(\partial^+\cC)$. The same arguments shows that $y$ is also at distance at most $L\delta+D$ from $f^{-1}(\partial^-\cC)$, and that for all $x\in \partial^-\cC$, $f^{-1}(x)$ is at distance at most $L\delta+D$ from $\mathcal H$.

  Next, let $z\in \cC$. Let $\Delta_0$ be the geodesic orthogonal to $\mathcal H$ containing $f^{-1}(z)$. The extreme points of $\Delta_0\cap f^{-1}(\cC)$ are points of $\Delta_0\cap f^{-1}(\partial \cC)$, so $f^{-1}(z)$ is contained in an interval $J$ of $\Delta_0$ bounded by $\Delta_0 \cap \mathcal H$ and a point of either  $f^{-1}(\partial^+\cC)$ or  $f^{-1}(\partial^-\cC)$, and we suppose without loss in generality it is the former (the other case is handled in the same manner). It follows from the previous argument that the length of the interval $J$ is less than $D + L \delta$ and hence $d(f^{-1}(z),f^{-1}(\partial^+\cC))\leq D+L\delta$. We also have that $d(f^{-1}(z),\mathcal H)\leq D+L\delta$ and therefore $d(f^{-1}(z),f^{-1}(\partial^-\cC))\leq 2D+2L\delta$.
 As a consequence, using again that $f$ is $L$-Lipschitz, $d(z,\partial^+\cC)\leq LD+L^2\delta$, and $d(z,\partial^-\cC)\leq 2LD+2L^2\delta$. Since this holds for all $z\in \cC$, we obtain that $w(C)\leq 3LD+3L^2\delta =: \eta_0$.
 
Next, consider the foliation of $\HH^3$ by surfaces $\Sigma_r$ at constant signed distance $r$ from $\mathcal H$.
We have already shown that for $|r| > L \delta + D$, the surface $\Sigma_r$ is disjoint from $f^{-1}(\cC)$, and hence $f(\Sigma_r)$ is disjoint from $\cC$. We choose the sign convention for $r$ so that when $r > L \delta + D$, the surface $f(\Sigma_r)$ lies on the concave side of $\partial^+ \cC$.
Fix some $r > L \delta + D$. Note that points of $f(\Sigma_r)$ lie within distance $L(r + D + L\delta)$ of $\partial^+ \cC$. This follows because a point of $\Sigma_r$ is distance $r$ from $\mathcal H$ and any point of $\mathcal H$ lies within distance $D + L \delta$ of $f^{-1}(\partial^+ \cC)$, as argued above, hence points of $\Sigma_r$ are within distance $r + D +L \delta$ of $f^{-1}(\partial^+ \cC)$ and that bound gets worse at most by a factor of $L$ when applying $f$.

Since $df$ stretches and compresses tangent vectors by at most a factor of $L$, it follows that the path metric on $f(\Sigma_r)\subset \HH^3$ is $L$-bilipschitz to the path metric on $\Sigma_r \subset \HH^3$ which itself is a $\cosh(r)$--bilipschitz embedded copy of $\HH^2$ in $\HH^3$. 

Consider two points $x,y \in \partial^+ \cC$, and let $x',y' \in f(\Sigma_r)$ be points within distance $L(r + D + L\delta)$ from $x,y$ respectively. Write $x' = f(a'), y' = f(b')$ and let $[a',b']_{\Sigma_r}$ be a geodesic in the path metric on $\Sigma_r$; its length is equal to $d_{\mathcal H}(a',b') \cosh r$, where $d_{\mathcal H}(\cdot, \cdot)$ denotes the distance after projection to $\mathcal H$. The length of $f([a',b']_{\Sigma_r})$ in the path metric of $f(\Sigma_r)$ is at most $L \cosh(r)d_{\mathcal H}(a',b') \leq L \cosh(r)d(a',b')$, where $d(\cdot, \cdot)$ denotes distance in $\HH^3$. Then, letting $d_+(\cdot,\cdot)$ denote the induced path metric on $\partial^+ \cC$, we have the following, where the first inequality comes from the fact that projections onto convex surfaces are distance decreasing:
\begin{align*}
d_+(x,y) &\leq d(x,x') + d(y,y') + \mathrm{length}(f([a',b']_{\Sigma_r})\\
& \leq 2L(r + D + L \delta) + L \cosh(r)  d(a',b') \\
& \leq 2L(r + D + L\delta) + L^2 \cosh(r) d(x',y')\\
& \leq 2L(r + D + L \delta) + L^2 \cosh(r) \left(d(x,y) + d(x',x) + d(y',y) \right)\\
&\leq 2L(r + D + L \delta) + L^2 \cosh(r) \left( d(x,y) + 2L(r + D + L \delta) \right)\\
& = L^2\cosh(r) d(x,y) +  2L(r + D + L \delta) (\cosh(r) L^2 +1).
\end{align*}
Hence the induced metric on $\partial^+ \cC$ is $(L',A')$ quasi-isometrically embedded for $L' = L^2 \cosh(r)$ and $A' = 2 L(r + D + L \delta) (\cosh(r) L^2 +1)$. The same argument proves $\partial^- \cC$ is also $(L', A')$ quasi-isometrically embedded (for the same $L', A'$).
\end{proof}

We are now ready to prove Proposition \ref{prop:proj-hyp}. 

\begin{proof}[Proof of Proposition \ref{prop:proj-hyp}]
Suppose $C$ is $k$-quasicircle. Since any quasiconformal map of $\CP^1$ extends to a bilipschitz map of $\HH^3$ (with constant depending on $k$), see Tukia--V\"{a}is\"{a}l\"{a} \cite[Theorem 3.11]{tukia_vaisala}, Lemma~\ref{lem:keyy} shows that $w(C) \leq \eta_0$ and $\partial^\pm \cC$ are $(L,A)$-quasi-isometrically embedded, where $\eta_0,L,A$ depend only on $k$. Consider $x,y \in \partial^-\cC$. Then $d(x, \pi_+(x)), d(y,\pi_+(y)) \leq \eta_0$. As a consequence
\begin{align*}
d_+(\pi_+(x), \pi_+(y)) &\leq Ld(\pi_+(x), \pi_+(y)) + A\\
&\leq L ( d(x,y) + 2\eta_0) + A\\
&\leq L( d_{\partial^- \cC}(x,y) + 2\eta_0) + A,
\end{align*}
and
\begin{align*}
d_+(\pi_+(x), \pi_+(y)) &\geq  d(\pi_+(x), \pi_+(y))\\
\geq d(x,y) - 2\eta_0
&\geq \dfrac{1}{L}d_{-}(x,y)-A - 2\eta_0,
\end{align*}
so $\pi_+$ is a quasi-isometric embedinng, with constants depending only on the quasiconformal regularity of $C$. Similarly, $\pi_-$ is a quasi-isometric embedding, with constants depending only on the quasiconformal regularity of $C$. Since $\pi_- \circ \pi_+$ and $\pi_+ \circ \pi_-$ are each at most $2 \eta_0$ away from the identity map, $\pi_+$ and $\pi_-$ are quasi-inverses, hence quasi-isometries. 

Conversely, suppose that $\pi_+\co\partial^-\cC\to \partial^+\cC$ is a $(L,A)$-quasi-isometry. Let $\Omega^+$ and $\Omega^-$ be the connected component of $\CP^1\setminus C$ facing $\partial^+\cC$ and $\partial^-\cC$, respectively. According to a theorem of Sullivan \cite{sullivan_travaux,epstein-marden}, there is a constant $K>1$ and $K$-bilipschitz maps $b^\pm\co\partial^\pm \cC\to \Omega^\pm$ (where $\Omega^\pm$ is equiped with the hyperbolic metric in its conformal class), with $b^\pm$ extending continuously to the identity on $C$. So the composition $b^+\circ\pi_+\circ (b^-)^{-1}\co\Omega^-\to\Omega^+$ is $(K^2L,K^2A)$-quasi-isometric and extends continuously to the identity on $C$, because the same is true for $b^+, b^-$ and indeed $\pi_+$ as well, since $\pi_+$ moves points at most by $\eta_0$.

Let now $u_\pm\co\Omega^\pm\to \HH^2$ be uniformization maps. Then the composition $u_+\circ b^+\circ\pi_+\circ (b^-)^{-1}\circ (u_-)^{-1}\co\HH^2\to\HH^2$ is a $(K^2L,K^2A)$-quasi-isometry, so its boundary extension is quasi-symmetric, with a quasi-symmetric norm depending only on $(L,A)$. It is therefore the boundary extension of a $k$-quasi-conformal map $q:\HH^2\to \HH^2$, with $k$ depending only on $(L,A)$. Since $b^+\circ\pi_+\circ (b^-)^{-1}$ extends continuously to the identity on $C$, we deduce that $q$ extends to the map $u_+\circ (u_-)^{-1}$ over $\partial\HH^2$ (where we are implicitly using Caratheodory's Theorem which ensures that the uniformization maps $u_\pm$ extend over the circle). As a consequence, the composition $(u_+)^{-1}\circ q\circ u_-\co\Omega_-\to\Omega_+$ is a $k$-quasi-conformal map that extends the identity over the boundary.

It now follows using standard arguments of \cite{ahlfors-reflections} that $C$ is the image of a circle in $\CP^1$ by a quasiconformal deformation, with quasiconformal factor depending only on the constants $(L,A)$.
\end{proof}

\subsection{Optimality of $w_0$} 

A natural question is: is the value for $w_0$ in Theorem \ref{tm:small} optimal? The following example shows that $w_0 = \mathrm{sinh}^{-1}(\sqrt{2})$ will not work. Note also that since $\mathrm{cosh}^{-1}(x) = \ln(1+\sqrt{x^2-1})$ and $\mathrm{sinh}^{-1}(x) = \ln(1+\sqrt{x^2+1})$, then the optimal value for $w_0$ is in the interval $[0.88137, 1.14622]$.
\begin{figure}
[hbt] \centering
\includegraphics[height= 4cm]{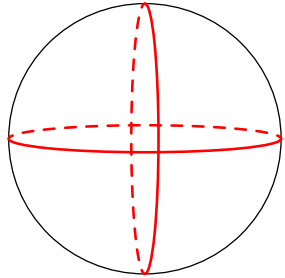}
\caption{The curve $G$ in $\CP^1$.}
\label{curve_G}
\end{figure}

\begin{figure}
[hbt] \centering
\includegraphics[height= 12cm]{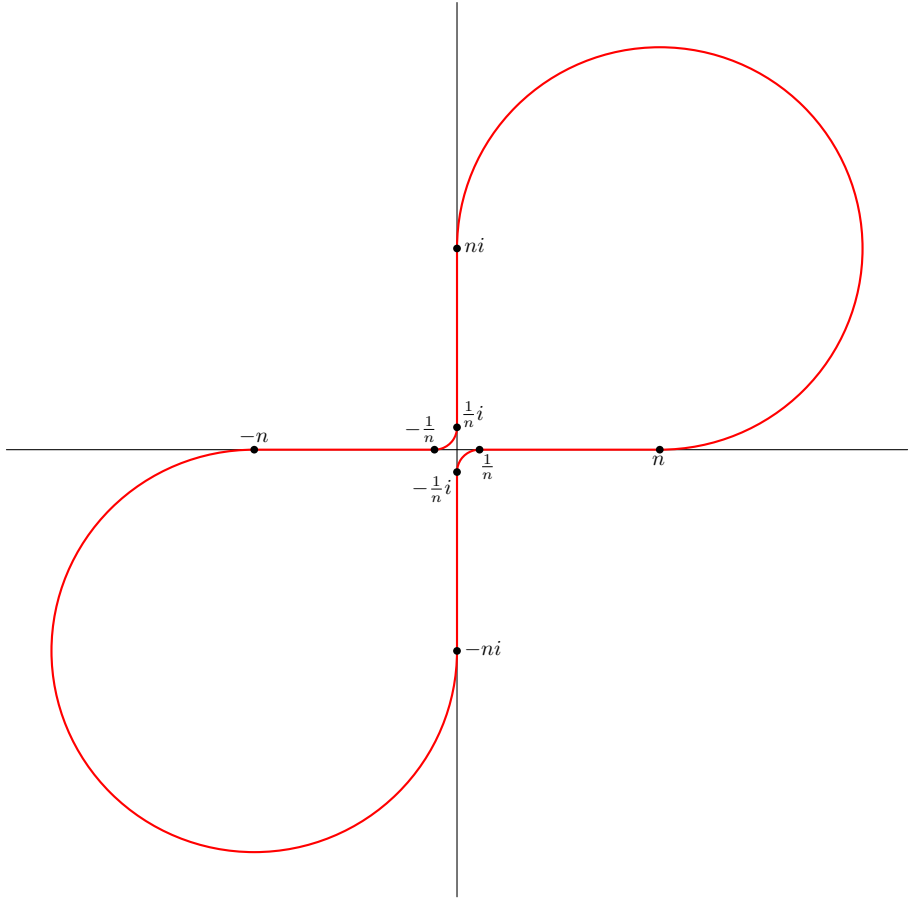}
\caption{The curve $G_n$ in $\C$.}
\label{curve_Gn}
\end{figure}

Let $G_n$ be the Jordan curves defined as follows. Start with the curve $G$ defined as the union of the two axes $\Re z = 0$ and $\Im z = 0$ in the plane $\C$, see Figure \ref{curve_G}. From $G$ remove the set $$\{z \in G \mid |\Re(z)| \in [0,\frac{1}{n})\cup(n, \infty), |\mathrm{Im}(z)| \in [0,\frac{1}{n})\cup(n, \infty)\}$$
and add arcs of circles, as shown in Figure \ref{curve_Gn}. The curves $G_n$ limit to the curve $G$, which is not a quasicircle.

\begin{figure}
[hbt] \centering
\includegraphics[height= 5cm]{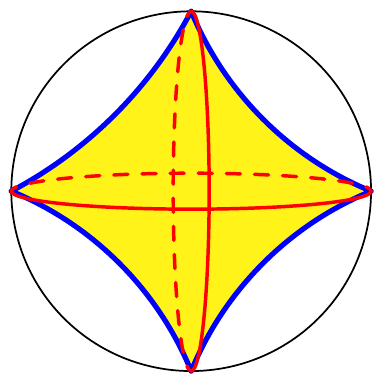}
\caption{Convex hull in $\HH^3$ of the curve $G$ in $\CP^1$.}
\label{hull_curve_Gn}
\end{figure}

Recall from Definition \ref{def:pwidth} that the boundary width $w_\partial(C)$ of a Jordan curve $C$ is the supremum over points of $\partial CH(C)$ of the distance to the other boundary component.

\begin{Proposition}
 $\lim_{n\to\infty} w_\partial(G_n) = \mathrm{sinh}^{-1}(\sqrt{2}).$
\end{Proposition}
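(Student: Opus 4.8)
The plan is to compute the convex hull $\CH(G)$ of the limiting curve $G$ explicitly and read off the boundary width, then argue that the boundary widths of $G_n$ converge to this value. The curve $G$ is the union of the two coordinate axes in $\C$, which compactifies to a union of two great circles in $\CP^1 = \bS^2$ meeting at two points ($0$ and $\infty$). The complement $\CP^1 \setminus G$ has four components (the four ``quadrants''), so $G$ is not a Jordan curve, but it still bounds a well-defined convex hull in $\HH^3$: each of the two circles bounds a totally geodesic plane, and $\CH(G)$ is the intersection of the half-spaces bounded by these planes containing the relevant quadrants — concretely, $\CH(G)$ is the convex hull of the four quarter-planes, a ``lens'' type region whose two boundary faces $\partial^\pm \CH(G)$ are each the union of two quarter-planes meeting along a common bi-infinite geodesic (the axis through $0$ and $\infty$). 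See Figure \ref{hull_curve_Gn}.

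Next I would compute $w_\partial(G) = \sup_{x \in \partial^+\CH(G)} d(x, \partial^-\CH(G))$. By the symmetry of the configuration (which is invariant under a large group: rotations about the $0$–$\infty$ axis and the reflections swapping quadrants), the supremum is approached along the ``ridge'' geodesics of $\partial^\pm\CH(G)$, or more precisely as one moves toward $0$ or $\infty$ along a point of maximal ``dihedral offset''. The key local model is exactly Lemma~\ref{lm:PP'Q} / Lemma~\ref{lem:roof}: at the extreme, three planes appear pairwise asymptotic, and the worst-case distance in the relevant hyperbolic plane $\Pi$ is governed by a right hyperbolic triangle with an ideal vertex. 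The difference from the $w_0$ computation is the relevant angle: here the configuration of the two great circles of $G$, which cross orthogonally at $0$ and $\infty$, replaces the angle $\pi/4$ appearing in Lemma~\ref{lm:PP'Q} by a different value, and running the hyperbolic cosine law with that angle yields $\sinh^{-1}(\sqrt 2)$ rather than $\cosh^{-1}(\sqrt 2)$. I would carry out this trigonometric computation carefully, using the Poincaré ball or upper half-space model with $G$ placed symmetrically, to confirm the constant is exactly $\sinh^{-1}(\sqrt2)$.

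Finally I would handle the limit $n \to \infty$. The curves $G_n$ are obtained from $G$ by rounding the four corners near $0$ and $\infty$ with small arcs of circles meeting the axes at definite angles, so $G_n \to G$ in the Hausdorff topology, and correspondingly $\CH(G_n) \to \CH(G)$ (convex hulls depend continuously on their ideal boundary in the Hausdorff sense, and the support planes vary continuously away from the rounded corners). For the lower bound $\liminf_n w_\partial(G_n) \geq \sinh^{-1}(\sqrt2)$, I would exhibit explicit points on $\partial^\pm\CH(G_n)$ — sitting over the long straight portions of the axes, where $\CH(G_n)$ agrees with $\CH(G)$ on compact sets of increasing size — whose distance to the opposite face approaches $\sinh^{-1}(\sqrt2)$. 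For the upper bound $\limsup_n w_\partial(G_n) \leq \sinh^{-1}(\sqrt2)$, I would argue that any point of $\partial^\pm\CH(G_n)$ lies in a support-plane configuration that, after normalizing by an isometry, is $C^0$-close either to the model configuration for $G$ (giving distance $\leq \sinh^{-1}(\sqrt2) + o(1)$) or to a configuration coming from one of the rounding arcs, which contributes a strictly smaller distance because the rounding arcs make larger angles with the axes than the asymptotic (angle-$0$) worst case. The main obstacle I expect is this last uniformity argument: one must rule out that the rounded corners near $0$ and $\infty$, which is precisely where $\CH(G_n)$ differs from $\CH(G)$, could create points of boundary width exceeding $\sinh^{-1}(\sqrt2)$; controlling this requires a compactness argument on normalized pointed support-plane triples, in the spirit of the proof of Lemma~\ref{lem:roof}, together with the observation that the extremal value is monotone in the relevant angle.
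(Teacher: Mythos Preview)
Your overall plan---pass to the limiting configuration $G$, exploit its symmetries to reduce to a two--dimensional computation, then argue that $w_\partial(G_n)\to w_\partial(G)$---is exactly the paper's plan. The gap is in your description of $\CH(G)$, and this propagates into a wrong model for the trigonometric step.

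The two totally geodesic planes $P_1,P_2\subset\HH^3$ bounded by the real and imaginary axes are \emph{not} support planes of $\CH(G)$: each of them has points of $G$ on both sides, so neither bounds a half-space containing $G$. In fact both $P_1$ and $P_2$, and in particular their common geodesic $\ell$ joining $0$ to $\infty$, lie in the \emph{interior} of $\CH(G)$. In the upper half-space model one checks that the genuine support planes facing the first quadrant are the hemispheres over circles tangent to both positive half-axes; taking the envelope gives the face $\{t^2=2xy,\; x,y>0\}$, which is not totally geodesic. So $\partial\CH(G)$ has four connected components, one curved face per quadrant, and there is no ``ridge'' along $\ell$. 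In particular your appeal to the three pairwise asymptotic planes of Lemma~\ref{lm:PP'Q}, with the $\pi/4$ replaced by some other angle, is modeling the wrong configuration.

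What the paper does instead is slice by a plane $\Pi$ orthogonal to $\ell$. Because inversion in $\Pi$ is a symmetry of $G$, nearest-point projections between faces stay in $\Pi$, and the four faces meet $\Pi$ in four geodesics of $\Pi$ forming a \emph{regular ideal quadrilateral} (for the first-quadrant face one sees this from $t^2=2xy$ together with $x^2+y^2+t^2=R^2$, which forces $x+y=R$, a geodesic of $\Pi$). Grouping opposite faces as $\partial^\pm$, the boundary width becomes the distance in $\Pi$ from the midpoint $m$ of one side of this ideal quadrilateral to an adjacent side $\ell'$, and a one-line Minkowski-model computation gives $\sinh d(m,\ell')=\sqrt 2$. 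Once you correct the picture of $\CH(G)$, your convergence argument for $w_\partial(G_n)\to w_\partial(G)$ is fine and is somewhat more explicit than the paper's.
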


\begin{proof}
  First, note that the convex hulls $\mathrm{CH}(G_n)$ are nested and limit to $\mathrm{CH}(G)$. We can then see that the limit $L := \lim_{n\to\infty} w_\partial(G_n)$ can be calculated as the boundary width of the limit curve $G$, where to make sense of the definition of $w_\partial(G)$, we decompose the boundary $\partial \mathrm{CH}(G)$ (which has four connected components) into two pieces, $\partial_+\mathrm{CH}(G)$ and  $\partial_-\mathrm{CH}(G)$, by taking the limits of $\partial_+\mathrm{CH}(G_n)$ and  $\partial_-\mathrm{CH}(G_n)$.  Looking at all the symmetries of this picture, we can see that $L$ corresponds to the maximum distance between one point on one face of $\mathrm{CH}(G)$ and the union of the two adjacent faces, see Figure \ref{hull_curve_Gn}.
  
  This calculation is easy because the picture has a lot of symmetries. The maximum will be achieved on any hyperbolic plane $\Pi$ meeting the geodesic $0 \infty$ orthogonally.  The calculation reduces to calculating the distance in the ideal hyperbolic quadrilateral $\mathcal{Q}$ in the hyperbolic plane $\Pi$ between a point on one of the sides and the union of the two adjacent sides. Using again the symmetry of this picture, we need to calculate the distance between the middle point of one of the sides to one of the two opposite sides. If we consider the Minkowski model in the usual coordinates, we can assume the vertices of $\mathcal{Q}$ to correspond to $[(1,1,0)]$, $[(1,0,1)]$, $[(1,-1,0)]$ and $[(1,0,-1)]$. The middle point of the side between $[(1,1,0)]$ and $[(1,0,1)]$ corresponds to $m = [(\sqrt{2},\frac{1}{\sqrt{2}},\frac{1}{\sqrt{2}})]$, while the geodesic between $[(1,1,0)]$ and $[(1,0,-1)]$ corresponds to the line $\ell$ defined as the subspace orthogonal to $v = [(1,1,-1)]$. Using a formula for the distance between a point and a line in the Minkowski plane, we can see that $\mathrm{sinh}(d(m, \ell)) = | \langle m, v \rangle | = \sqrt{2}$, as we wanted to prove. 
\end{proof}

\section{Comparing the width and the boundary width}
\label{sc:ww}

In this section, we show that the boundary width can indeed be strictly smaller than the width.

\begin{Proposition}
There exists a sequence of quasicircles $\Lambda_n \subset \CP^1$ so that $w_\partial(\Lambda_n)$ is uniformly bounded, but $w(\Lambda_n) \to \infty$.
\end{Proposition}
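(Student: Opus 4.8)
The plan is to produce the sequence $\Lambda_n$ by a construction parallel to the curve $D$ of Theorem~\ref{example2}, but truncated so that each $\Lambda_n$ is genuinely a quasicircle (unlike $D$, which is the non-quasicircle limit). Concretely, I would start from a single model ``bump'' $B \subset \C$: a Jordan arc that agrees with the real axis outside a compact interval, and in the middle executes an excursion of bounded Euclidean height but built from arcs of circles meeting at uniformly positive angles (so it is a quasicircle with controlled constant), designed so that the convex hull over this region is a long thin ``finger'' reaching deep into $\HH^3$. The key geometric feature to engineer is that a point near the tip of this finger is far (hyperbolic distance $\to\infty$ as the bump becomes more pronounced) from \emph{both} $\partial^+\CH$ and $\partial^-\CH$ simultaneously, while every point \emph{on} either boundary surface stays within a uniform distance of the other surface. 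Then $\Lambda_n$ is obtained by concatenating $n$ rescaled copies of such bumps along the real axis, with the $k$-th bump scaled so its finger has depth $\to\infty$ with $k$, and closing up through $\infty$ as in the construction of $D$.

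First I would set up the model bump and verify it is a $K$-quasicircle (uniformly in the scaling) using Ahlfors' bounded-turning criterion, Proposition~\ref{alf}: since the constituent arcs of circles meet at angles bounded below and are ``uniformly disjoint'' otherwise, the $\epsilon$-bigon argument from Section~\ref{proof_example1} applies verbatim to show each $\Lambda_n$ is a $K$-quasicircle with $K$ independent of $n$. Next I would bound $w_\partial(\Lambda_n)$ uniformly: because each $\Lambda_n$ is a uniform quasicircle, Lemma~\ref{lem:keyy} (via the Tukia--V\"ais\"al\"a bilipschitz extension, exactly as invoked in the proof of Proposition~\ref{prop:proj-hyp}) gives $w(\Lambda_n) \leq \eta_0$ for a constant $\eta_0$ depending only on $K$ --- wait, that would bound $w$ too, so that cannot be the route. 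Instead: the $\Lambda_n$ are \emph{not} uniform quasicircles in a scale-invariant sense --- their quasicircle constants blow up --- so I must bound $w_\partial$ by hand. I would do this by a direct support-plane analysis: for a point $x \in \partial^+\CH(\Lambda_n)$ sitting on the finger, exhibit an explicit support plane at $x$ and an explicit nearby point of $\partial^-\CH(\Lambda_n)$ (coming from the ``other side'' of the same thin region) at uniformly bounded distance, using that the bump has bounded Euclidean height so the two faces of the finger are uniformly close in the ambient metric. Symmetrically for $x\in\partial^-\CH$. This gives $w_\partial(\Lambda_n) \leq M$ for some $M$ independent of $n$.

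Finally, to see $w(\Lambda_n)\to\infty$, I would exhibit a point $x_n \in \CH(\Lambda_n)$ deep in the finger of the deepest bump and show $d(x_n,\partial^+\CH) + d(x_n,\partial^-\CH)\to\infty$. The cleanest way is a squeezing/disconnection argument as in the proof of Proposition~\ref{charac_bddwid}: normalize by an isometry $g_n$ carrying $x_n$ to a fixed basepoint $\bar x$; then because the relevant bump is being scaled to be extremely ``narrow'' relative to $x_n$'s depth, the images $g_n(\partial^\pm\CH(\Lambda_n))$ are forced to contain planes separating $\bar x$ from larger and larger embedded balls, forcing $d(\bar x, g_n(\partial^\pm\CH))\to\infty$. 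The main obstacle I anticipate is the simultaneous bookkeeping in the two middle steps: arranging the single model bump so that (i) it is a quasicircle, (ii) the two faces of its convex-hull finger are uniformly close (keeping $w_\partial$ bounded), yet (iii) the finger's tip is far from both faces (making $w$ large). Reconciling (ii) and (iii) requires exploiting the distinction between ``distance from a boundary point to the opposite face'' and ``distance from an interior point to both faces at once'' --- precisely the gap between Definitions~\ref{def:width} and~\ref{def:pwidth} --- and the delicate part is checking that the nearest point of the opposite face to a tip point is genuinely far even though every boundary point is close to the opposite face; this is where a careful explicit model (e.g.\ tracking the two support planes along the finger and showing their common perpendicular is long while each stays short to the opposite surface) will be needed.
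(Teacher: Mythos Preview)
Your approach is entirely different from the paper's, and I do not think it works as stated. The paper does not build explicit curves at all: it takes $\Lambda_n$ to be the limit sets of quasifuchsian groups $\Gamma_n$ obtained by $(1,n)$ hyperbolic Dehn filling on a geometrically finite manifold $M\cong (S\times\R)\setminus(\gamma\times\{0\})$ with a rank-two cusp (the Kerckhoff--Thurston construction). The convex cores $\mathscr C_n$ Gromov--Hausdorff converge to the convex core $\mathscr C$ of $M$. The point is that $\partial^\pm\mathscr C$ are \emph{compact} surfaces even though $\mathscr C$ itself is noncompact (it contains the cusp); compactness immediately gives a uniform bound on $w_\partial(\Lambda_n)$, while points going out the cusp witness $w(\Lambda_n)\to\infty$. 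The mechanism is the Margulis tube around the short core geodesic: both boundary surfaces wrap around it, so its core is far from each, yet every boundary point sees the opposite surface nearby.

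Your ``finger'' picture has a genuine gap. The convex hull $\CH(\Lambda_n)$ is convex, so it has no finger-shaped protrusions; any ``tip'' would lie on $\partial\CH(\Lambda_n)=\partial^+\CH\cup\partial^-\CH$ and hence be at distance zero from one of the two surfaces, contributing nothing to $w$. What you actually need is a \emph{tube}: an interior region far from both pleated boundaries simultaneously. A single piecewise-circular bump (or a narrow neck, as in the $C_n$ of Section~\ref{proof_example1}) does not create this --- near a neck the two boundary surfaces come \emph{together}, which makes $w$ small there, not large. The quasifuchsian examples produce a tube because the $(1,n)$ twist forces the limit set to spiral many times around the endpoints of the short geodesic; a simply-drawn arc-of-circles curve has no such winding, and you give no alternative mechanism. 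You correctly flag that reconciling (ii) and (iii) is ``the delicate part'' and then leave it entirely open; that is exactly the missing idea, and I do not see how to supply it without essentially reproducing the Dehn-filling picture. If you want an explicit route, you would have to hand-build a quasicircle that mimics the spiraling of $\Lambda_n$ near the axis of a short loxodromic and then redo the Margulis-tube estimates by hand --- at which point the quasifuchsian argument is strictly cleaner.
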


\begin{proof}
The quasicircles $\Lambda_n$ will be the limit sets of quasifuchsian groups $\Gamma_n$ whose geometric limit develops a rank two cusp. We recall the construction of Kerckhoff-Thurston~\cite{KT90}.

  Let $S$ be a closed oriented surface of genus $2$, let $\gamma$ be a simple closed curve on $S$.
  Fix two conformal metrics $c_-, c_+$ on $S$. By the Ahlfors-Bers Theorem \cite{ber_sim, ahl_the}, there exists a unique geometrically finite hyperbolic manifold $M$ which is homeomorphic to $(S \times \RR) \setminus (\gamma \times \{0\})$ and so that the conformal metric on $S \times \{\infty\}$ is $c_+$ and the conformal metric on $S \times \{-\infty\}$ is $c_-$. The end of $M$ associated to $\gamma \times \{0\}$ is a rank two cusp.
 
 Holding the conformal structures $c_+$ and $c_-$ at $S \times \{\infty\}$ and $S \times \{-\infty\}$ fixed, and performing hyperbolic Dehn filling on the cusp with filling slope $(1,n)$ yields hyperbolic manifolds $M_n$ which converge to $M$ in the Gromov-Hausdorff sense. Each manifold $M_n$ is quasifuchsian, and in particular homeomorphic to $S \times \RR$. Let $\Gamma_n < \PSL_2 \CC$ be the holonomy group of $M_n$ and let $\Lambda_n \subset \CP^1$ be the limit set of $\Gamma_n$, a quasicircle.
 
 The convex hull $\CH(\Lambda_n)$ covers the convex core $\mathscr{C}_n$ of $M_n$.  Note $\mathscr{C}_n$ is compact. The two boundary components $\partial^+ \CH(\Lambda_n)$ and $\partial^- \CH(\Lambda_n)$ cover the two boundary surfaces $\partial^+ \mathscr{C}_n$ and $\partial^- \mathscr{C}_n$ of $\mathscr{C}_n$, each homeomorphic to $S$.  In the limit as $n \to \infty$, the convex cores $\mathscr{C}_n$ converge to the convex core $\mathscr{C}$ of $M$, which is no longer compact since it contains the cusp. However, $\partial^+ \mathscr{C}_n$ and $\partial^- \mathscr{C}_n$ converge respectively to two compact surfaces $\partial^+ \mathscr{C}$ and $\partial^- \mathscr{C}$ bounding $\mathscr{C}$. By compactness, the maximum distance in $\mathscr{C}$ from a point on $\partial \mathscr{C}^-$ (resp. $\partial \mathscr{C}^+$) to $\partial \mathscr{C}^+$ (resp. $\partial \mathscr{C}^-$) is finite. Hence the maximum distance from a point of $\partial^+ \mathscr{C}_n$ (resp. $\partial^- \mathscr{C}_n$) to $\partial^- \mathscr{C}_n$ (resp. $\partial^+ \mathscr{C}_n$) in $\mathscr{C}_n$ remains bounded as $n \to \infty$. Hence, lifting to $\CH(\Lambda_n)$, we find that $w_\partial(\Lambda_n)$ is bounded as $n \to \infty$.
 
 On the other hand, $\mathscr{C}$ is not compact, but it has compact boundary $\partial^+ \mathscr{C}\cup\partial^- \mathscr{C}$. So there are points in $\mathscr{C}$, far out in the cusp, achieving arbitrary distance to both surfaces $\partial^+ \mathscr{C}$ and $\partial^- \mathscr{C}$. Hence there are points $x_n \in \mathscr{C}_n$ so that $d_{\mathscr{C}_n}(x_n, \partial^+ \mathscr{C}_n), d_{\mathscr{C}_n}(x_n, \partial^- \mathscr{C}_n) \to \infty$. Lifting to $\CH(\Lambda_n)$, we observe that $w(\Lambda_n) \to \infty$.
\end{proof}

Theorem \ref{tm:small} suggests that it could be possible that boundary width is equal to width for Jordan curves (quasicircles) of width bounded above by a small constant. 

\bibliographystyle{amsalpha} 
\bibliography{adsbib}

\providecommand{\bysame}{\leavevmode\hbox to3em{\hrulefill}\thinspace}
\providecommand{\MR}{\relax\ifhmode\unskip\space\fi MR }
% \MRhref is called by the amsart/book/proc definition of \MR.
\providecommand{\MRhref}[2]{%
  \href{http://www.ams.org/mathscinet-getitem?mr=#1}{#2}
}
\providecommand{\href}[2]{#2}
\begin{thebibliography}{BDMS19}

\bibitem[Ahl63]{ahlfors-reflections}
Lars~V. Ahlfors, \emph{Quasiconformal reflections}, Acta Math. \textbf{109}
  (1963), 291--301. \MR{0154978}

\bibitem[Ahl66]{ahlfors}
L.~V. Ahlfors, \emph{Lectures on quasiconformal mappings}, D. Van Nostrand Co.,
  Inc., Toronto, Ont.-New York-London, 1966, Manuscript prepared with the
  assistance of Clifford J. Earle, Jr. Van Nostrand Mathematical Studies, No.
  10.

\bibitem[Ahl69]{ahl_the}
Lars~V. Ahlfors, \emph{The structure of a finitely generated kleinian group},
  Acta Math. \textbf{122} (1969), 1--17.

\bibitem[And83]{anderson:minimal2}
Michael~T. Anderson, \emph{Complete minimal hypersurfaces in hyperbolic
  {$n$}-manifolds}, Comment. Math. Helv. \textbf{58} (1983), no.~2, 264--290.
  \MR{705537 (85e:53076)}

\bibitem[BDMS19]{convexhull}
Francesco Bonsante, Jeffrey Danciger, Sara Maloni, and Jean-Marc Schlenker,
  \emph{The induced metric on the boundary of the convex hull of a quasicircle
  in hyperbolic and anti de sitter geometry}, arXiv preprint arXiv:1902.04027
  (2019).

\bibitem[Ber60]{ber_sim}
Lipman Bers, \emph{Simultaneous uniformization}, Bull. Amer. Math. Soc.
  \textbf{66} (1960), 94--97. \MR{0111834}

\bibitem[BS10]{maximal}
Francesco Bonsante and Jean-Marc Schlenker, \emph{Maximal surfaces and the
  universal {T}eichm\"uller space}, Invent. Math. \textbf{182} (2010), no.~2,
  279--333. \MR{2729269}

\bibitem[EM86]{epstein-marden}
D.~B.~A. Epstein and A.~Marden, \emph{Convex hulls in hyperbolic spaces, a
  theorem of {Sullivan}, and measured pleated surfaces}, Analytical and
  geometric aspects of hyperbolic space (D.~B.~A. Epstein, ed.), L.M.S. Lecture
  Note Series, vol. 111, Cambridge University Press, 1986.

\bibitem[Eps86]{epstein:reflections}
Charles~L. Epstein, \emph{The hyperbolic {G}auss map and quasiconformal
  reflections}, J. Reine Angew. Math. \textbf{372} (1986), 96--135. \MR{863521
  (88b:30029)}

\bibitem[KT90]{KT90}
Steven~P. Kerckhoff and William~P. Thurston, \emph{Noncontinuity of the action
  of the modular group at {B}ers' boundary of {T}eichm\"{u}ller space}, Invent.
  Math. \textbf{100} (1990), no.~1, 25--47. \MR{1037141}

\bibitem[LV73]{lehto-virtanen}
O.~Lehto and K.~I. Virtanen, \emph{Quasiconformal mappings in the plane},
  second ed., Springer-Verlag, New York-Heidelberg, 1973, Translated from the
  German by K. W. Lucas, Die Grundlehren der mathematischen Wissenschaften,
  Band 126. \MR{0344463}

\bibitem[Sep16]{seppi:disks}
Andrea Seppi, \emph{Minimal discs in hyperbolic space bounded by a quasicircle
  at infinity}, Comment. Math. Helv. \textbf{91} (2016), no.~4, 807--839.
  \MR{3566524}

\bibitem[Sul81]{sullivan_travaux}
Dennis Sullivan, \emph{Travaux de {T}hurston sur les groupes quasi-fuchsiens et
  les vari\'et\'es hyperboliques de dimension {$3$}\ fibr\'ees sur {$S^{1}$}},
  Bourbaki {S}eminar, {V}ol. 1979/80, Lecture Notes in Math., vol. 842,
  Springer, Berlin-New York, 1981, pp.~196--214. \MR{636524}

\bibitem[TV82]{tukia_vaisala}
P.~Tukia and J.~V\"{a}is\"{a}l\"{a}, \emph{Quasiconformal extension from
  dimension {$n$} to {$n+1$}}, Ann. of Math. (2) \textbf{115} (1982), no.~2,
  331--348. \MR{647809}

\end{thebibliography}

\end{document}